\documentclass[10pt,a4paper]{amsart}
\usepackage[utf8]{inputenc}
\usepackage{amsmath}
\usepackage{amsfonts}
\usepackage{amssymb}
\usepackage{enumerate}
\usepackage{graphicx}
\usepackage{hyperref}
\usepackage{amsthm}
\usepackage{hhline}
\usepackage{mathdots}
\usepackage{xcolor}
\usepackage{colortbl}
\usepackage{listings}
\usepackage{accents}

\usepackage{mathtools}
\usepackage{subfigure}
\numberwithin{table}{section}
\usepackage{longtable}
\usepackage{tabularx}
\newcolumntype{C}[1]{>{\centering\arraybackslash}p{#1}}
\usepackage{booktabs}
\usepackage[english]{babel}
\renewcommand{\arraystretch}{1.1}
\setlength{\textwidth}{\paperwidth}
\addtolength{\textwidth}{-5cm}
\calclayout
\newtheorem{prop}{Proposition}[section]
\newtheorem{theorem}[prop]{Theorem}
\newtheorem{lemma}[prop]{Lemma}\newtheorem{theoremA}{Theorem}

\theoremstyle{remark}
\newtheorem{rem}[prop]{Remark}
\theoremstyle{definition}

\newtheorem{condition}{Condition}

\DeclareMathOperator{\Inn}{Inn}

\DeclareMathOperator{\Aut}{Aut}
\DeclareMathOperator{\sign}{sign}

\DeclareMathOperator{\ord}{ord}

\DeclareMathOperator{\Irr}{Irr}
\DeclareMathOperator{\W}{W}
\DeclareMathOperator{\Inf}{Inf}

\DeclareMathOperator{\Sym}{Sym}

\DeclareMathOperator{\Gal}{Gal}
\DeclareMathOperator{\ab}{ab}
\DeclareMathOperator{\PSU}{PSU}

\DeclareMathOperator{\SU}{SU}
\DeclareMathOperator{\GL}{GL}

\DeclareMathOperator{\St}{St}
\DeclareMathOperator{\SL}{SL}
\DeclareMathOperator{\PSL}{PSL}

\DeclareMathOperator{\Ind}{Ind}

\DeclareMathOperator{\Syl}{Syl}
\DeclareMathOperator{\Out}{Out}

\DeclareMathOperator{\pr}{pr}

\DeclareMathOperator{\PGL}{PGL}

\begin{document}
\title{The inductive McKay--Navarro condition for the Suzuki and Ree groups}
\author{Birte Johansson}
\address{FB Mathematik, TU Kaiserslautern, Postfach 3049, 67653 Kaiserslautern, Germany.}
\email{johansson@mathematik.uni-kl.de}
\subjclass[2010]{20C15, 20C33}
\keywords{McKay--Navarro conjecture, finite groups of Lie type, Galois action on characters}
\begin{abstract}
We verify the inductive McKay--Navarro condition for the Suzuki and Ree groups for all primes as well as for $p=3$ and the groups $\PSL_3(q) $ with $q \equiv 4,7 \mod 9$, $\PSU_3(q) $ with $ q \equiv 2,5 \mod 9$, and $\mathsf{G_2(q)}$ with $q=2,4,5,7 \mod 9$. On the way, we show that there exists a Galois-equivariant Jordan decomposition for the irreducible characters of the Suzuki and Ree groups.
\end{abstract}
\maketitle

\section{Introduction}
In the representation theory of finite groups, the so-called local-global conjectures assert a relation between the representation theory of a finite group and that of some of its local subgroups. 

Let $p$ be a prime and $G$ a finite group. We say that a character of $G$ is a $p'$-character if its degree is not divisible by $p$.
The McKay conjecture claims that there is a bijection between the irreducible $p'$-characters of $G$ and the irreducible $p'$-characters of $N_G(R)$ where $R$ is a Sylow $p$-subgroup of $G$. In 2004, Navarro refined this conjecture and predicted that this bijection can be chosen equivariant under certain Galois automorphisms. In this paper, we are concerned with this conjecture, the McKay--Navarro or Galois--McKay conjecture.

The current approach to prove these conjectures uses the classification of finite simple groups: In a first step, the conjecture is reduced to a problem about simple groups, yielding a so-called inductive condition that is often stronger than the original conjecture. In a second step, this inductive condition is verified for all finite simple groups.

For the McKay--Navarro conjecture, a reduction theorem has been proven in 2019 by Navarro, Späth, and Vallejo \cite{navarro2019reduction}. The resulting inductive condition has been verified for all finite groups of Lie type in their defining characteristic  \cite{ruhstorfer2017navarro} \cite{johansson2020} and for $p=2$ and the simple groups $\mathsf{C}_n(q)$ ($n \geq 1$), $\mathsf{B}_n(q)$ ($n \geq 3$, $(n,q) \neq (3,3)$), $\mathsf{G}_2(q)$ ( $3 \nmid q$), $^3\mathsf{D}_4(q)$, $\mathsf{F}_4(q)$, $\mathsf{E}_7(q)$, and $\mathsf{E}_8(q)$ where $q$ is a power of an odd prime \cite{ruhstorfer2021inductive}. Additionally, there are some partial results for groups not of type $\mathsf{A}$ in \cite{fry2020galoisequivariant}.

The main result of this paper is the following theorem.
\begin{theoremA}\label{thmA}
The inductive McKay--Navarro condition is satisfied for
\begin{enumerate}[(i)]
\item the Suzuki and Ree groups for all primes $p$, 
\item $\PSL_3(q) $ with $q \equiv 4,7 \mod 9$ and $\PSU_3(q) $ with $ q \equiv 2,5 \mod 9$ for $p=3$,
\item $\mathsf{G_2(q)}$ with $q=2,4,5,7 \mod 9$ for $p=3$.
\end{enumerate} 
\end{theoremA}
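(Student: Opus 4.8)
The plan is to treat the three families separately and, within each, to split off the defining characteristic from the non-defining primes. For the Suzuki and Ree groups in their defining characteristic ($p=2$ for ${}^2B_2(q^2)$ and ${}^2F_4(q^2)$, $p=3$ for ${}^2G_2(q^2)$) the assertion is already contained in the cited verification of the inductive McKay--Navarro condition for groups of Lie type in defining characteristic, so only non-defining primes remain; for $\PSL_3(q)$, $\PSU_3(q)$ and $G_2(q)$ the congruence conditions force $3 \nmid q$, so $p=3$ is likewise non-defining. When $p \nmid |S|$ there is nothing to prove, so assume $p$ is non-defining with $p \mid |S|$. In the universal covering group $G$ of $S$ one first fixes a Sylow $p$-subgroup $R$ together with $M := N_G(R)$. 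Two regimes occur: for ${}^2B_2(q^2)$, ${}^2G_2(q^2)$ and ${}^2F_4(q^2)$ at an odd non-defining prime the relative Weyl groups are small, so $R$ is abelian and contained in a maximal torus, and $M$ is the corresponding torus normaliser -- an explicitly known, often Frobenius, group; for $\PSL_3(q)$, $\PSU_3(q)$ and $G_2(q)$ at $p=3$ the congruences pin $R$ down as the extraspecial group of order $27$ and exponent $3$, with $M$ again explicitly computable (and for $G_2(q)$ governed by the subsystem subgroup of type $A_2$ or ${}^2A_2$).

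The conceptual core of part (i) is a Galois-equivariant Jordan decomposition for ${}^2B_2$, ${}^2G_2$ and ${}^2F_4$, which I would establish first. Lusztig's Jordan decomposition identifies $\Irr(G)$ with the disjoint union over semisimple classes $s$ of the sets of unipotent characters of the centralisers $C^*(s)$, but it involves genuine choices (for ${}^2F_4$ there is a cuspidal unipotent family with non-trivial Fourier transform). Writing $\mathcal{G} := \Gal(\mathbb{Q}^{\mathrm{ab}}/\mathbb{Q})$, the task is to show that a compatible system of choices can be made so that the action of $\sigma \in \mathcal{G}$ on $\Irr(G)$ corresponds to its action on the labels $s$ together with its explicitly-determined action on unipotent characters, with the finitely many genuine irrationalities -- the degenerate/exceptional characters of ${}^2B_2(q^2)$, ${}^2G_2(q^2)$ and ${}^2F_4(q^2)$, whose values lie in small cyclotomic fields -- tracked by hand, and so that the same choices stay compatible with the field automorphisms generating $\Aut(G)$.

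Next I would write down the McKay bijection $\Omega \colon \Irr_{p'}(G) \to \Irr_{p'}(M)$. On the $G$-side the $p'$-characters are read off from the Jordan decomposition (equivalently, in the cyclic-defect cases, from the Brauer tree: its $p'$-characters are the non-exceptional vertices together with one exceptional representative); on the $M$-side $\Irr_{p'}(M)$ is immediate from the structure of $M$. Up to this, $\Omega$ is essentially the bijection already used to verify the ordinary McKay conjecture for these groups, so the real work is to choose $\Omega$ equivariantly for $\Aut(G)_R$ and for $\mathcal{G}$ at the same time. For the Suzuki, Ree and $G_2(q)$ cases this rests on $\Out(S)$ being cyclic, generated by a field automorphism whose action on both sides -- and that of $\mathcal{G}$ -- is controlled by the previous step; for $\PSL_3(q)$ with $q \equiv 4,7 \bmod 9$ and $\PSU_3(q)$ with $q \equiv 2,5 \bmod 9$ the congruence is precisely what keeps the relevant $3$-local character fields small (of $\mathbb{Q}(\zeta_3)$-type), so that the combined action of Galois and of the diagonal, graph and field automorphisms on the non-semisimple $3'$-characters and on $\Irr_{p'}(M)$ can be matched directly.

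It then remains to verify the remaining parts of the inductive condition: that $\chi$ and $\Omega(\chi)$ have equal central characters, and the cohomological compatibility ($\mathcal{G}$-block isomorphism of $(G \rtimes A_\chi,\chi)$ with $(M \rtimes A_\chi,\Omega(\chi))$ in the Navarro--Späth--Vallejo formulation). For Suzuki, Ree and $G_2(q)$ this is almost automatic: $\Out(S)$ is cyclic, so the relevant stabilisers over $G$ are cyclic, each character extends to its stabiliser with trivial factor set, and the comparison reduces to scalars fixed together with $\Omega$; the genuine exceptions are the small covering groups ${}^2B_2(8)$ (Schur multiplier $2^2$), ${}^2G_2(3)' \cong \PSL_2(8)$, ${}^2F_4(2)'$ and $\PSL_3(4)$, handled directly or by existing computations. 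For $\PSL_3(q)$ and $\PSU_3(q)$, where $\Out(S)$ is non-cyclic, one instead invokes the known extendibility results for $\SL_3(q)$ and $\SU_3(q)$ relative to diagonal, graph and field automorphisms to build compatible projective representations. I expect the main obstacle to be exactly the interlocking of two subtleties: proving that Lusztig's non-canonical Jordan decomposition can be chosen Galois-equivariantly and $\Aut(G)_R$-equivariantly simultaneously for the twisted groups ${}^2B_2$, ${}^2G_2$ and ${}^2F_4$; and, for $\PSL_3/\PSU_3$ at $p=3$, controlling the cohomological obstruction in the presence of non-cyclic $\Out(S)$ together with the (restricted but still non-trivial) Galois action -- which is why only the listed congruence classes are handled.
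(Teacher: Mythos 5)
Your overall strategy coincides with the paper's: a Galois-equivariant Jordan decomposition for the twisted groups, an $\Aut(G)_R\times\mathcal{H}$-equivariant bijection to an explicitly known local subgroup, invariant character extensions, and separate treatment of the groups with exceptional Schur multipliers. However, your case division for part (i) contains a concrete error. You claim that for the Suzuki and Ree groups at any odd non-defining prime the Sylow $p$-subgroup $R$ is abelian and contained in a maximal torus, with $M$ the torus normaliser. This fails for ${}^2\mathsf{F}_4(q^2)$ at $p=3$ when $q^2\equiv 2,5 \bmod 9$: there the Sylow $3$-subgroup is extraspecial of order $27$ and its normaliser is isomorphic to $\SU_3(2).2$, not a torus normaliser (this is exactly why no Sylow $\Phi^{(3)}$-torus is well defined in that case). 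Moreover, $p=2$ for ${}^2\mathsf{G}_2(q^2)$ falls through your two regimes entirely — it is non-defining but not odd, and there $R$ is elementary abelian of order $8$ with $N_G(R)=R\cdot(C_7\rtimes C_3)$. The paper devotes a separate section to precisely these two configurations, verifying the bijection and the extensions by hand from the known character data; your proposal as written would simply not reach them, and the generic torus-normaliser machinery you describe does not apply to them.

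A second, smaller understatement: for the cyclic-outer-automorphism cases you say the extension/cohomological part is ``almost automatic'' because characters extend to cyclic stabilisers with trivial factor set. Mere extendibility is not enough; the inductive condition requires extensions that are invariant under the joint stabiliser in $\Gamma\times\mathcal{H}$, i.e.\ the Galois group must fix the \emph{chosen} extension. Producing such extensions is where the paper does genuine work — for unipotent characters via Deligne--Lusztig induction in the disconnected group $\underline{\mathbf{G}}\rtimes\langle\tau\rangle$ and multiplicity-one arguments inside $R_{\mathbf{T}}^{\mathbf{G}}(1)$, and for regular characters via the Gelfand--Graev character and its multiplicity-freeness. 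Your outline correctly identifies that this step must be carried out, but offers no mechanism for guaranteeing $\mathcal{H}$-invariance of the extensions beyond the real-character/odd-index situation, which does not cover all the global $p'$-characters.
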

In order to prove this, we generalize the existence of a Galois-equivariant Jordan decomposition for connected reductive groups with connected centre and Frobenius morphisms from \cite{srinivasanvinroot2019} to the Suzuki and Ree groups.
Note that the main work for the proof of the following theorem has already been done by Srinivasan and Vinroot and we only consider the case of Suzuki and Ree groups, see Proposition \ref{propjordan}.
\begin{theoremA}
Let $\mathbf{G}$ be a connected reductive group with connected centre defined over $\overline{\mathbb{F}_q}$ for some prime power $q$ and $F$ a Steinberg endomorphism. Let $m$ be the exponent of ${\mathbf{G}}^{F}$ and $\varepsilon_m$ a primitive $m$-th root of unity. For $\sigma \in \Gal(\mathbb{Q}(\varepsilon_m)/\mathbb{Q})$, let $b \in \mathbb{Z}$ with $\gcd(b,m)=1$ such that $\varepsilon_m^{\sigma} =\varepsilon_m^b$.
If  $\chi \in \Irr({\mathbf{G}}^{F})$ has Jordan decomposition $(s, \nu)$ for $s \in \mathbf{G}^F$ semisimple and $\nu$ a unipotent character of $C_{\mathbf{G}^F}(s)$, then $\chi^{\sigma}$ has Jordan decomposition $(s^b, \nu^\sigma)$.
\end{theoremA}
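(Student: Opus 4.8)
The plan is to follow the strategy of Srinivasan--Vinroot and to localise the one point in their argument that genuinely uses that $F$ is a Frobenius endomorphism; for the Suzuki and Ree groups the remaining input is then supplied by the known character tables. Since $\mathbf{G}$ has connected centre, the Jordan decomposition $J_s\colon \mathcal{E}(\mathbf{G}^F,(s))\to\mathcal{E}(C_{\mathbf{G}^F}(s),1)$ (identifying $\mathbf{G}$ with its dual, which is legitimate in types $\mathsf{B}_2$, $\mathsf{G}_2$ and $\mathsf{F}_4$) is pinned down by a list of compatibilities — with Lusztig induction $R_{\mathbf{L}}^{\mathbf{G}}$, with multiplication by linear characters coming from the centre, with central characters, and with the distinguished regular and semisimple characters in each rational series — and the proofs of these facts, as well as the uniqueness statement itself, do not require $F$ to be Frobenius. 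It therefore suffices, for $\chi\in\mathcal{E}(\mathbf{G}^F,(s))$ with $J_s(\chi)=\nu$, to check that $J_{s^b}(\chi^\sigma)=\nu^\sigma$; here $C_{\mathbf{G}^F}(s)=C_{\mathbf{G}^F}(s^b)$ since $\langle s\rangle=\langle s^b\rangle$ when $\gcd(b,m)=1$, so the comparison makes sense, and the whole proof reduces to showing that $\sigma$ is compatible with each of the structures listed above.

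First I would establish that $\sigma$ permutes the rational Lusztig series by $(s)\mapsto(s^b)$. On $\Irr(\mathbf{T}^F)$ the automorphism $\sigma$ acts by $\theta\mapsto\theta^b$, hence under duality sends $s\in\mathbf{T}^{*F}$ to $s^b$, and one checks $\sigma\bigl(R_{\mathbf{T}}^{\mathbf{G}}(\theta)\bigr)=R_{\mathbf{T}}^{\mathbf{G}}(\theta^b)$ up to the twist coming from the action of $\sigma$ on the eigenvalues of $F$ on $\ell$-adic cohomology. This portion transfers to Steinberg endomorphisms; the only reason it cannot simply be quoted is that for $^2\mathsf{B}_2(q^2)$, $^2\mathsf{G}_2(q^2)$ and $^2\mathsf{F}_4(q^2)$ the Green functions and the Frobenius eigenvalues are genuinely non-rational, involving $\sqrt2$, respectively $\sqrt3$, and roots of unity whose orders divide the exponent $m$ — so one must keep track of $\sigma$ on these quantities. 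Precisely because these orders divide $m$, $\sigma$ acts on them through $b$ in a way that is tabulated, and the series-permutation statement goes through; the same remarks apply to the (unique, by connectedness of the centre) Gelfand--Graev character, which $\sigma$ fixes, and to the semisimple characters $\chi_s$, for which one obtains $\chi_s^\sigma=\chi_{s^b}$ as usual.

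The substance of what remains is the comparison inside a fixed series, which I would organise by family. For $^2\mathsf{B}_2(q^2)$ and $^2\mathsf{G}_2(q^2)$ almost every non-trivial semisimple $s$ has abelian centralizer, a maximal torus, so $\mathcal{E}(\mathbf{G}^F,(s))$ is a singleton and the claim reduces to $\chi_s^\sigma=\chi_{s^b}$; the remaining semisimple classes — the identity in both cases, and the involution of $^2\mathsf{G}_2(q^2)$ whose centralizer involves a group of type $\mathsf{A}_1$ — have only rational unipotent characters except for the cuspidal unipotent characters of $\mathbf{G}^F$ itself, which I would treat directly from the tables of Suzuki and of Ward, verifying that $\sigma$ moves the cuspidal unipotent characters exactly according to its action on the attached eigenvalue of $F$, which is what the prescription $(s,\nu)\mapsto(s^b,\nu^\sigma)$ demands. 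For $^2\mathsf{F}_4(q^2)$ one must in addition control the Galois action on the unipotent characters of the centralizers $C_{\mathbf{G}^F}(s)$; these are reductive groups of small rank whose rational forms are tori, groups of type $\mathsf{A}_1$, classical groups, or a Suzuki group, so the required statement follows either from the Frobenius case of Srinivasan--Vinroot or from the $^2\mathsf{B}_2$ case just proved, leaving only the unipotent characters of $^2\mathsf{F}_4(q^2)$ itself, for which I would argue from Malle's character table together with the explicit description of the Galois action on unipotent characters via Lusztig's nonabelian Fourier transform.

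I expect the main obstacle to be this last compatibility: showing that the bijection $\chi^\sigma\mapsto J_{s^b}(\chi^\sigma)$ on the unipotent characters of the centralizer agrees on the nose with their intrinsic Galois action. For the rational unipotent characters this is forced once the series-permutation and the treatment of the regular and semisimple characters are in place, but for the cuspidal ones it is not implied by the characterising properties alone — it rests on the Galois-compatibility of the eigenvalue-of-Frobenius invariant with Jordan decomposition, which for the Suzuki, Ree and $^2\mathsf{F}_4$ groups has to be extracted by hand from the explicit data. Everything else amounts to checking that the Srinivasan--Vinroot machinery survives the replacement of a Frobenius endomorphism by a Steinberg endomorphism, which it does because none of the structural inputs — Lusztig induction, the duality functor, the Gelfand--Graev characters, the classification and values of unipotent characters — is specific to the Frobenius case.
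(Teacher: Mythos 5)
Your plan is sound in outline, but it takes a genuinely different --- and considerably heavier --- route than the paper. The paper reduces Theorem B to the Suzuki and Ree case (everything else being Srinivasan--Vinroot) and then proves that case by an elementary degree argument: it quotes $\mathcal{E}(\mathbf{G}^F,s)^\sigma=\mathcal{E}(\mathbf{G}^F,s^b)$ from Geck--Malle (already valid for Steinberg endomorphisms), notes that the Jordan decomposition matches character degrees between a rational series and the unipotent characters of the centralizer, so whenever the degrees within a series are pairwise distinct the claim is automatic; an inspection of the generic character tables shows this disposes of all of ${}^2\mathsf{B}_2$ and ${}^2\mathsf{G}_2$ and leaves only the single family $\mathcal{E}(\mathbf{G}^F,s_2(k))$ in ${}^2\mathsf{F}_4$, whose centralizer has type ${}^2\mathsf{B}_2$; there the explicit \textsf{CHEVIE} values show that $\sigma$ swaps $\chi_{23},\chi_{24}$ exactly when it swaps $\mathcal{W},\overline{\mathcal{W}}$. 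You instead propose to re-run the full Srinivasan--Vinroot machinery (uniqueness of the Jordan decomposition via its characterizing compatibilities, equivariance of Lusztig induction, Gelfand--Graev characters, eigenvalues of Frobenius) for very twisted $F$. That can be made to work, but it obliges you to actually verify that the Digne--Michel uniqueness statement and each compatibility survive the passage to Steinberg endomorphisms --- an assertion you make but do not discharge --- and it converges with the paper at the crux anyway: as you yourself note, the cuspidal unipotent characters are not pinned down by the characterizing properties and must be matched by hand against explicit data, which is exactly the paper's table computation. One small correction: the unipotent characters of $\mathbf{G}^F$ itself require no verification at all, since their Jordan decomposition is $(1,\chi)$ and the assertion $(1,\chi^\sigma)$ for $\chi^\sigma$ is tautological; the real work sits only in the non-unipotent series whose centralizer has non-rational unipotent characters, i.e.\ the ${}^2\mathsf{B}_2$-type centralizers inside ${}^2\mathsf{F}_4$. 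What the paper's degree trick buys is precisely that Lusztig induction, duality and Gelfand--Graev characters never need to be discussed; what your route would buy is a proof pattern closer to the general theorem, at the cost of several unverified transfers from the Frobenius to the Steinberg setting.
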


We chose to study the Suzuki and Ree groups since they are often excluded in statements about groups of Lie type and thus require a special treatment. For the other groups in Theorem \ref{thmA}, we cannot systematically choose a local group in the same way as for other groups of Lie type, see Section \ref{sectionnonconvenient}. Therefore, they have to be considered separately.

\section{Preliminaries}
We introduce the notation we use throughout this paper. Let $G$ be a finite group and $p$ a prime dividing $|G|$. 
\subsection{Actions on characters}
The set of irreducible characters of $G$ is denoted by $\Irr(G)$ and the set of irreducible $p'$-characters by $\Irr_{p'}(G)$. 
We consider the subgroup $\mathcal{H} \subseteq \Gal(\mathbb{Q}^{\ab}/\mathbb{Q})$ consisting of those Galois automorphisms that map every root of unity $\xi$ with $\ord(\xi)$ prime to $p$ to $\xi^{p^k}$ for some fixed integer $k$. The group $\mathcal{H}$ acts on $\Irr(G)$ by first evaluating the character and then applying the Galois automorphisms to the character values. 

We denote the automorphism group of $G$ by $\Aut(G)$ and the inner automorphism group by $\Inn(G)$. If $N$ is a normal subgroup of $G$, we can define an action of the setwise stabilizer $\Aut(G)_N$ on a character $\psi$ of $N$ via $\psi^f(x)=\psi(x^{f^{-1}})$ for all $x \in N$, $f \in \Aut(G)_N$. If $f$ is an inner automorphism, we also use the group element itself to denote conjugation with $g \in G$. 

\subsection{Inductive McKay--Navarro condition}
We want to show that a simple group $G$ satisfies the inductive McKay--Navarro (iMcKN) condition for $p$ from \cite[Definition 3.5]{navarro2019reduction}. 
Since the condition is quite technical, we only state a simplified version that implies the iMcKN condition for groups with trivial Schur multiplier and cyclic outer automorphism group.
\begin{condition}\label{imck}
Let $G$ be a non-abelian simple group with cyclic outer automorphism group and trivial Schur multiplier. We let $R \in \Syl_p(G)$ and write $\Gamma :=\Aut(G)_R :=\{f \in \Aut(G) \mid f(R) = R\}$. Then $G$ satisfies the iMcKN condition for $p$ if the following holds for all $\psi \in {\Irr}_{p'}(G)$:
\begin{enumerate}[(1)]
\item There exists a $\Gamma$-stable subgroup $N_G(R) \subseteq N \subsetneq G$ and a $\Gamma \times \mathcal{H}$-equivariant bijection $$\Omega: {\Irr}_{p'}(G) \rightarrow {\Irr}_{p'}(N).$$
\item There are $(\Gamma \times \mathcal{H})_\psi$-invariant extensions $\hat{\psi} \in \Irr(G \rtimes \Gamma_\psi)$ of $\psi$ and $\widehat{\Omega(\psi)} \in \Irr(N \rtimes \Gamma_\psi)$ of $\Omega(\psi)$.
\end{enumerate}
\end{condition}
As in \cite[Proposition 2.3]{spaeth2012mckaydefining} it is clear that this implies the inductive McKay--Navarro condition from \cite[Definition 3.5]{navarro2019reduction} for $G$ and $p$.

\subsection{Suzuki and Ree groups}
In Section 3 and Section 4, let $G \in \{{}^2\mathsf{B}_2(2^{2f+1}),{}^2\mathsf{G}_2(3^{2f+1}), ^2\mathsf{F}_4(2^{2f+1})\}$ be a Suzuki or Ree group for $f \geq 0$ and $l$ its defining characteristic. By $p$ we denote a prime dividing $|G|$ different from $l$. 
The Suzuki and Ree groups are all simple and have trivial Schur multiplier except for ${}^2\mathsf{B}_2(2), {}^2\mathsf{B}_2(8), {}^2\mathsf{G}_2(3),$ and ${}^2\mathsf{F}_4(2)$ that will be treated seperately in Section \ref{sectionq2=2}. Thus, it suffices to verify Condition \ref{imck}.

Let $\mathbf{G}$ be the algebraic group of type $\mathsf{B}_2,$ $\mathsf{F}_4$, or $\mathsf{G}_2$, respectively, defined over an algebraic closure of $\mathbb{F}_l$. We denote by $F_l$ the standard field endomorphism of $\mathbf{G}$ and by $\gamma$ the graph morphism of $\mathbf{G}$ induced by the exceptional symmetry of the Dynkin diagram. Setting $F=F_l^f \circ \gamma$, we obtain $\mathbf{G}^F \cong G$.

\section{Equivariant bijections}
We first prove the existence of a group $N$ and a $\Gamma \times \mathcal{H}$-equivariant bijection as in (1) of Condition \ref{imck}. 
We already know that ${\mathbf{G}}^{F}$ satisfies the inductive McKay-condition for all $p \in \mathbb{P}$ by \cite{isaacsmallenavarro2007} and \cite{cabanesspaeth2013}. Thus, there exists a group $N$ and a $\Gamma$-equivariant bijection $$\Omega_{\text{iMcK}}: \Irr_{p'}({\mathbf{G}}^{F}) \rightarrow \Irr_{p'}(N).$$
We want to show that this bijection can also be chosen $\mathcal{H}$-equivariant.

We know that $(\mathbf{G},F)$ is selfdual by \cite[p.120]{carter1985finite} and the centre of $\mathbf{G}$ is trivial by \cite[Theorem 1.12.5]{gorensteinlyonssolomon1998}. For a semisimple element $s \in {\mathbf{G}}^{F}$ we denote by $\mathcal{E}(\mathbf{G}^F, s)$ the set of irreducible constituents of all Deligne--Lusztig generalized characters $R_{\mathbf{T}}^\mathbf{G}(s)$ for $F$-stable maximal tori $\mathbf{T} \subseteq \mathbf{G}$ with $s \in \mathbf{T}^F$. By the Jordan decomposition of irreducible characters of $\mathbf{G}^F$, we have a partition $$\Irr(\mathbf{G}^F)=\bigcup_{s}\mathcal{E}(\mathbf{G}^F, s)$$ where $s$ ranges over the conjugacy classes of semisimple elements of $\mathbf{G}^F$ \cite[Theorem 2.6.2]{geckmalle2020}.

The outer automorphism group of $\mathbf{G}^F$ is then $\langle \gamma \rangle=\langle F_l \rangle$ and has order $2f+1$ \cite[Theorem 2.5.12]{gorensteinlyonssolomon1998}. We already know that we have
$\mathcal{E}(\mathbf{G}^F, s)^\kappa=\mathcal{E}(\mathbf{G}^F, s^\kappa)$
for every outer automorphism $\kappa$ of $\mathbf{G}^F$ \cite[Lemma 3.2]{johansson2020}. 

\subsection{Equivariance of the Jordan decomposition}
In the following, we need a $\Gamma \times \mathcal{H}$-equivariant Jordan decomposition for the irreducible characters of $\mathbf{G}^F$.
Therefore, we first prove that \cite[Theorem 5.1]{srinivasanvinroot2019} continues to hold for the Suzuki and Ree groups.

\begin{prop} \label{propjordan}
Let $m$ be the exponent of ${\mathbf{G}}^{F}$ and $\varepsilon_m$ a primitive $m$-th root of unity. Let $\sigma \in \Gal(\mathbb{Q}(\varepsilon_m)/\mathbb{Q})$ and $b \in \mathbb{Z}$ with $\gcd(b,m)=1$ such that $\varepsilon_m^{\sigma} =\varepsilon_m^b$. 
Let $\chi \in \Irr({\mathbf{G}}^{F})$ with Jordan decomposition $(s, \nu)$ for $s \in \mathbf{G}^F$ semisimple and $\nu$ a unipotent character of $C_{\mathbf{G}^F}(s)$. Then, $\chi^{\sigma}$ has Jordan decomposition $(s^b, \nu^\sigma)$.
\end{prop}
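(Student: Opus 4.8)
The plan is to reduce the claim to the already-known case of connected reductive groups with connected centre admitting a Frobenius morphism, treated in \cite[Theorem 5.1]{srinivasanvinroot2019}, by exploiting the very special structure of the Suzuki and Ree groups. The key observation is that for $G \in \{{}^2\mathsf{B}_2(q^2),{}^2\mathsf{G}_2(q^2),{}^2\mathsf{F}_4(q^2)\}$ with $q^2 = l^{2f+1}$, the ambient group $\mathbf{G}$ has trivial centre (hence connected centre), and the Steinberg endomorphism $F = F_l^f \circ \gamma$ satisfies $F^2 = F_l^{2f+1}$, which is an ordinary Frobenius endomorphism. So I would first record that the Jordan decomposition for $\mathbf{G}^F$ is inherited from, or compatible with, that of $\mathbf{G}^{F^2}$, a group to which \cite[Theorem 5.1]{srinivasanvinroot2019} directly applies. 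Concretely, semisimple classes, centralizers $C_{\mathbf{G}^F}(s)$, unipotent characters, and the pairing realizing the Jordan decomposition are all defined in exactly the same way as in the split/untwisted case; Digne--Michel's uniqueness statement for the Jordan decomposition and its compatibility with duality, central characters, and automorphisms hold verbatim for Steinberg endomorphisms, not just Frobenius ones.

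The heart of the argument, following Srinivasan--Vinroot, is to track how a Galois automorphism $\sigma$ acts on the three ingredients of a character $\chi \in \mathcal{E}(\mathbf{G}^F,s)$: the Deligne--Lusztig virtual characters $R_{\mathbf{T}}^{\mathbf{G}}(\theta)$, the semisimple parameter $s$ (equivalently a linear character $\hat s$ of a dual torus), and the unipotent character $\nu$ of $C_{\mathbf{G}^F}(s)$. First I would invoke the fact that unipotent characters of any finite reductive group are rational-valued, except for a short explicit list of ``exotic'' ones — and crucially, for the Suzuki and Ree groups the unipotent characters that fail to be rational take values in $\mathbb{Q}(\sqrt{\pm l})$ or $\mathbb{Q}(\sqrt{2})$ etc., and the Galois action on them is understood (Geck, and Malle's work on unipotent character values); in any case the formula $\nu^\sigma$ on the right-hand side absorbs whatever this action is. Next, the behaviour of Deligne--Lusztig induction under $\sigma$: applying $\sigma$ to the character formula for $R_{\mathbf{T}}^{\mathbf{G}}(\theta)$, and using that the Green functions / values of $R_{\mathbf{T}}^{\mathbf{G}}(1)$ are integers (Steinberg endomorphism case included), one gets $\big(R_{\mathbf{T}}^{\mathbf{G}}(\theta)\big)^\sigma = R_{\mathbf{T}}^{\mathbf{G}}(\theta^\sigma)$, and $\theta^\sigma = \theta^b$ under the identification with the dual torus. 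Passing from tori to the semisimple class, this says $\mathcal{E}(\mathbf{G}^F,s)^\sigma = \mathcal{E}(\mathbf{G}^F,s^b)$, which pins down the semisimple part of the Jordan decomposition of $\chi^\sigma$ as $s^b$.

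With the semisimple part identified, what remains is to show that, \emph{within} the series, $\sigma$ acts through the Jordan decomposition as $\nu \mapsto \nu^\sigma$ on the $C_{\mathbf{G}^F}(s)$-side. Here I would quote the uniqueness characterisation of the Jordan decomposition (it is determined by compatibility with $R_{\mathbf{T}}^{\mathbf{G}}$-induction, with central characters, and with multiplication by linear characters), show that the map $\chi \mapsto (\chi^{\sigma})$ followed by the inverse Jordan decomposition at $s^b$ and then twisting back to $s$ satisfies the same list of compatibilities as the original Jordan decomposition does — using the Galois-equivariance of $R_{\mathbf{T}}^{\mathbf{G}}$ just established, and that $\sigma$ commutes with restriction/twisting by linear characters — and conclude by uniqueness that this composite is the identity on the unipotent side up to the action $\nu \mapsto \nu^\sigma$ on $C_{\mathbf{G}^F}(s)^F$-characters. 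The only genuinely new points relative to \cite{srinivasanvinroot2019}, and thus the main obstacle, are: (a) confirming that every cited structural result (uniqueness of Jordan decomposition, integrality of Green functions, the classification of unipotent character fields of values) is available for the exceptional Steinberg endomorphisms and not merely for Frobenius endomorphisms — for ${}^2\mathsf{B}_2, {}^2\mathsf{G}_2, {}^2\mathsf{F}_4$ this is genuinely in the literature (Lusztig's book, Geck--Malle, Malle's character-table papers), but it must be checked case by case; and (b) handling the finitely many unipotent characters with irrational values, where one must verify explicitly that applying $\sigma$ sends the Jordan-correspondent of $\nu$ to the Jordan-correspondent of $\nu^\sigma$ rather than to some other character in the (possibly non-singleton) Galois orbit — this is a small finite computation using the known unipotent character tables of the groups in question.
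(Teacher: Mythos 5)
Your first step (the equality $\mathcal{E}(\mathbf{G}^F,s)^\sigma=\mathcal{E}(\mathbf{G}^F,s^b)$) agrees with the paper, which simply cites \cite[Proposition 3.3.16]{geckmalle2020} for it. After that your route diverges, and it is the divergent part that does not close. The opening reduction to $\mathbf{G}^{F^2}$ is not a usable step: $F^2=F_l^{2f+1}$ is indeed a Frobenius endomorphism, but the irreducible characters and the Jordan decomposition of $\mathbf{G}^F$ are not ``inherited'' from those of $\mathbf{G}^{F^2}$ in any way that would let you transport \cite[Theorem 5.1]{srinivasanvinroot2019}; you would need Shintani descent or a comparable mechanism, and you do not supply one. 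More seriously, your main mechanism for pinning down the unipotent part --- the uniqueness characterisation of the Jordan decomposition --- is exactly what is \emph{not} available off the shelf for the Suzuki and Ree groups: the Digne--Michel unicity theorem is stated for Frobenius endomorphisms, and its defining conditions (Frobenius eigenvalues of unipotent characters, compatibility with Lusztig induction from proper Levi subgroups) would themselves have to be re-established and their $\sigma$-equivariance verified in the very twisted cases. You flag this as something ``to be checked case by case'' but checking it is the entire content of the claim, so as written the argument is circular at its key step.

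The paper avoids all of this with a much more elementary observation: by \cite[Corollary 2.6.6]{geckmalle2020} the Jordan decomposition matches character degrees between $\mathcal{E}(\mathbf{G}^F,s)$ and the unipotent characters of $C_{\mathbf{G}^F}(s)$ (up to the fixed index factor), so whenever the degrees within a series are pairwise distinct the conclusion is forced, with $\nu^\sigma=\nu$. The residual cases are the series containing repeated degrees, and for ${}^2\mathsf{B}_2$ and ${}^2\mathsf{G}_2$ there are none; for ${}^2\mathsf{F}_4$ the only one is $\mathcal{E}(\mathbf{G}^F,s_2(k))=\{\chi_{22}(k),\ldots,\chi_{25}(k)\}$ with centralizer of type ${}^2\mathsf{B}_2$, where one checks directly from the CHEVIE table that $\sigma$ swaps $\chi_{23}$ and $\chi_{24}$ exactly when it swaps the unipotent characters $\mathcal{W}$ and $\overline{\mathcal{W}}$ of the centralizer, and that $s_2(k)^b=s_2(bk)$. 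Your point (b) gestures at this computation but misidentifies the residual set (irrationality of unipotent values rather than coincidence of degrees within a series, though the two are closely related) and, crucially, does not perform it. To repair your proof you should drop the uniqueness machinery, insert the degree argument, and carry out the explicit ${}^2\mathsf{F}_4$ verification.
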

\begin{proof}
We know by \cite[Proposition 3.3.16]{geckmalle2020} that  
$$\mathcal{E}(\mathbf{G}^F, s)^\sigma=\mathcal{E}(\mathbf{G}^F, s^b).$$
If $\chi$ is a unipotent character, its Jordan decomposition is $(1,\chi)$. Then $\chi^{\sigma}$ is also unipotent and its Jordan decomposition is $(1,\chi^\sigma)$ as claimed. 

Assume that the characters in $\mathcal{E}(\mathbf{G}^F, s)$ have pairwise distinct degrees. The degrees of the characters in $\mathcal{E}(\mathbf{G}^F, s)$ correspond uniquely to the degrees of the unipotent characters of $C_{\mathbf{G}^F}(s)$ by \cite[Corollary 2.6.6]{geckmalle2020}. It follows that $\nu$ is fixed by $\sigma$ and, if $\chi \in \mathcal{E}(\mathbf{G}^F, s)$ has Jordan decomposition $(s, \nu)$, the character $\chi^\sigma$ has Jordan decomposition $(s^b, \nu)$.

It remains to consider the characters that are not unipotent and cannot be distinguished in their rational series by their character degrees. 
We identify and study these cases by looking at the known generic character tables (see e.g. \cite{chevie}).

If ${\mathbf{G}}^{F}$ is of type ${}^2\mathsf{B}_2$ or ${}^2\mathsf{G}_2$, no character like this exists. 
For Ree groups of type ${}^2\mathsf{F}_4$, we use the notation of characters as in \cite{chevie} and denote the characters of type $i$ by $\chi_i(k)$. We do not specify the parameter $k$ here but assume it to be as described in \cite{chevie}. The only characters that still have to be considered are
$$\mathcal{E}(\mathbf{G}^F, s_2(k))=\{\chi_{22}(k), \chi_{23}(k), \chi_{24}(k), \chi_{25}(k)\}$$ where we denote the semisimple elements $s_i(k)$ as labeled in \cite{chevie}.
As before, it is clear that the claim holds for $\chi_{22}(k)$ and $\chi_{25}(k)$ since they are determined by their degrees.
By looking at the explicit character values given in \cite{chevie} we see
$$\chi_{23}(k)^\sigma=\left\{ \begin{array}{ll}
\chi_{23}(bk) & \text{if } i^\sigma=i, \\
\chi_{24}(bk) & \text{if } i^\sigma=-i,
\end{array} \right. \quad \chi_{24}(k)^\sigma=\left\{ \begin{array}{ll}
\chi_{24}(bk) & \text{if } i^\sigma=i, \\
\chi_{23}(bk) & \text{if } i^\sigma=-i,
\end{array} \right. $$
where $i$ denotes a primitive fourth root of unity.

In the notation of \cite{shinoda1975conj2f4} and using the Steinberg presentation, the semisimple elements $s_2(k)$ correspond to elements conjugate to $t_1(k)=\left(1,1,\varepsilon_{q^2-1}^k, \varepsilon_{q^2-1}^{k(2^{f+1}-1)}\right)$ with $\varepsilon_{q^2-1}$ a primitive $(q^2-1)$-th root of unity. 
We see that $s_2(k)^b=s_2(bk)$. 

Now, the centralizers of $s_2(k)$ are of type ${}^2\mathsf{B}_2$ and thus have four unipotent characters $1, \mathcal{W}, \overline{\mathcal{W}}, \St$ as given in \cite[Theorem 4.6.9]{geck2003introduction}. We have 
$$1^\sigma=1, \quad \mathcal{W}^\sigma=\left\{ \begin{array}{ll}
\mathcal{W} & \text{if } i^\sigma=i, \\
\overline{\mathcal{W}} & \text{if } i^\sigma=-i,
\end{array} \right. \quad \overline{\mathcal{W}}^\sigma=\left\{ \begin{array}{ll}
\overline{\mathcal{W}} & \text{if } i^\sigma=i, \\
\mathcal{W} & \text{if } i^\sigma=-i,
\end{array} \right. \quad \St^\sigma=\St.$$
This shows the claim for groups of type ${}^2\mathsf{F}_4$.
\end{proof}
\subsection{$\mathcal{H}$-equivariance of character bijections} \label{sectionbijection} We want to show that the parametrization of irreducible $p'$-characters of $\mathbf{G}^F$ from \cite[Theorem 8.5]{malle2008height0} leads to a $\Gamma \times \mathcal{H}$-equivariant bijection. We already know by \cite[Theorem 6.1]{cabanesspaeth2013} that we have $\Gamma$-equivariance for type ${}^2\mathsf{F}_4$. For the other types, this can be shown in the same way or by looking at the explicit bijections in \cite[Section 16 and 17]{isaacsmallenavarro2007} that coincide with Malle's parametrization. 

First, we prove a number theoretical lemma about the prime divisors of some cyclotomic polynomials.
\begin{lemma} \label{lemmamodulo}
Let $q^2=2^{2f+1}$ for an integer $f \geq 1$ and let $p$ be an odd prime.
\begin{enumerate}[(a)]
\item If $p \mid q^2-1$, then $p \equiv 1,7 \mod 8$.
\item If $p \mid q^4-q^2+1$ and $p \neq 3$, then $p \equiv 1 \mod 3$.
\item If $p \mid q^4 \pm \sqrt{2}q^3 +q^2 \pm \sqrt{2}q +1$ and $p \neq 3$, then $p \equiv 1,11 \mod 12$.
\item If $p \mid q^2 \pm \sqrt{2}q +1$, then $p \equiv 1 \mod 4$.
\end{enumerate}
\end{lemma}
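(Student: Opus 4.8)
The plan is to reduce each of (a)--(d) to an elementary statement about the multiplicative order modulo $p$, after recognising the polynomials involved as values of cyclotomic polynomials in the \emph{integer} $Q:=q^2=2^{2f+1}$. Note first that all the integers appearing in (a)--(d) are odd: $Q$, $\sqrt2\,q=2^{f+1}$, $\sqrt2\,q^3=2^{3f+2}$ and $q^4=Q^2$ are all even, so each of the quantities in the statement is congruent to $1$ modulo $2$; hence $p\ne 2$ is automatic. The one external ingredient I shall use is the standard fact that if a prime $p$ divides $\Phi_n(a)$ for some integer $a$, then either $p\equiv 1\pmod n$ or $p\mid n$ (for the values $n\in\{4,6,12\}$ that appear this is immediate from inspecting $\ord_p(Q)$).

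For (a), set $d=\ord_p(2)$. Since $p\mid q^2-1=2^{2f+1}-1$ we have $d\mid 2f+1$, so $d$ is odd, and therefore $2\equiv 2^{d+1}=\bigl(2^{(d+1)/2}\bigr)^2\pmod p$; thus $2$ is a square modulo $p$. Since $2$ is a quadratic residue modulo an odd prime $p$ precisely when $p\equiv\pm1\pmod 8$, we conclude $p\equiv 1,7\pmod 8$.

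For the remaining parts I use the identities $q^4-q^2+1=Q^2-Q+1=\Phi_6(Q)$, together with $(q^2+\sqrt2\,q+1)(q^2-\sqrt2\,q+1)=(q^2+1)^2-2q^2=Q^2+1=\Phi_4(Q)$ and $(q^4+\sqrt2\,q^3+q^2+\sqrt2\,q+1)(q^4-\sqrt2\,q^3+q^2-\sqrt2\,q+1)=(q^4+q^2+1)^2-2q^2(q^2+1)^2=Q^4-Q^2+1=\Phi_{12}(Q)$. Hence: in (b) a prime $p\ne 2,3$ dividing $q^4-q^2+1$ divides $\Phi_6(Q)$, so $6\mid p-1$ and $p\equiv 1\pmod 3$ --- here the hypothesis $p\ne 3$ is genuinely needed, since $Q\equiv 2\pmod 3$ makes $3\mid\Phi_6(Q)$; in (d) a prime $p$ dividing one of the two factors divides $\Phi_4(Q)$, and as $p$ is odd this gives $4\mid p-1$, i.e.\ $p\equiv 1\pmod 4$; in (c) a prime $p\ne 2,3$ dividing one of the two factors divides $\Phi_{12}(Q)$, so $12\mid p-1$, whence $p\equiv 1\pmod{12}$, which is in fact stronger than the claimed $p\equiv 1,11\pmod{12}$ (one checks $3\nmid\Phi_{12}(Q)$, so the exclusion $p\ne 3$ is vacuous in this case).

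The argument is in essence pure bookkeeping; the only points that require any care are the two product identities exhibiting $q^2\pm\sqrt2\,q+1$ and $q^4\pm\sqrt2\,q^3+q^2\pm\sqrt2\,q+1$ as divisors of $\Phi_4(Q)$ and $\Phi_{12}(Q)$ respectively, and keeping track of the exceptional prime $3$ dividing $\Phi_6$ and $\Phi_{12}$ --- which is exactly what distinguishes the hypotheses of (b) and (c).
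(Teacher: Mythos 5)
Your proof is correct. The paper's argument and yours both ultimately compute the order of an element of $(\mathbb{Z}/p\mathbb{Z})^\times$, but they are organised differently. For (a) the two proofs essentially coincide: oddness of $2f+1$ forces $2$ to be a quadratic residue modulo $p$, and the second supplement to quadratic reciprocity finishes. For (b)--(d) the paper works with the base $2$, deriving relations of the form $2^{\gcd(k(2f+1),\,p-1)}\equiv\pm1\pmod p$ and ruling out the bad residue classes of $p$ one at a time by contradiction; you instead observe that after multiplying the two conjugate factors together (exactly as the paper also does) the relevant integers are $\Phi_4(Q)$, $\Phi_6(Q)$, $\Phi_{12}(Q)$ for $Q=q^2=2^{2f+1}$, and invoke the standard fact that an odd prime divisor of $\Phi_n(Q)$ either satisfies $n\mid p-1$ or divides $n$. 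This is a cleaner packaging of the same arithmetic: it isolates exactly why $p=3$ must be excluded in (b) (namely $3\mid\Phi_6(Q)$ because $Q\equiv 2\pmod 3$), and in (c) it yields the sharper conclusion $p\equiv 1\pmod{12}$ and shows that the exclusion of $p=3$ there is in fact vacuous. The two product identities you use are easily verified, and the case-by-case check of the ``standard fact'' for $n=4,6,12$ is routine, so there is no gap.
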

\begin{proof} 
We know by Fermat's little theorem that we have $2^{p-1} \equiv 1 \mod p$.
\begin{enumerate}[(a)]
\item We have $q^2-1=2^{2f+1}-1 \equiv 0 \mod p$, thus $2^{2f+1} \equiv 1 \mod p$. It follows $$2^{\gcd(2f+1,p-1)}=2^{\gcd(2f+1,\frac{p-1}{2})} \equiv 1 \mod p$$ and since $p$ is odd further $2^{(p-1)/2} \equiv 1 \mod p$. The claim follows from the second supplement to quadratic reciprocity.
\item Assume that we have additionally $p \mid q^4-1$. Then it follows $p \mid q^2+1$ or $p \mid q^2-1$. Now $p \mid q^4-q^2+1$ implies $p \mid q^2-2$ which is not possible at the same time for $p \neq 3$. Thus, we know $p \nmid q^4-1$. 
Since $p \mid q^4-q^2+1$, it follows $p \mid q^6+1=(q^4-q^2+1)(q^2+1)$ and $2^{3(2f+1)} \equiv -1 \mod p$. Together with Fermat's little theorem we have $2^{\gcd(p-1,6(2f+1))}\equiv 1 \mod p$. If $p \equiv 2 \mod 3$, it follows $\gcd(p-1,6(2f+1))=\gcd(p-1,2(2f+1))$ which implies $2^{2(2f+1)}=q^4 \equiv 1 \mod p$ and $p \mid q^4-1$. This is a contradiction and since $p \neq 3$, the claim holds.
\item We know $p \mid ( q^4 + \sqrt{2}q^3 +q^2 + \sqrt{2}q +1)( q^4 - \sqrt{2}q^3 +q^2 - \sqrt{2}q +1)(q^4+1)=q^{12}+1$ and as before it follows $$(*) \quad 2^{\gcd(6(2f+1),p-1)} \equiv -1 \mod p, \quad  2^{\gcd(12(2f+1),p-1)} \equiv 1 \mod p.$$
Since $p \in \mathbb{P}$, only $p \equiv 1,5,7,11 \mod 12$ are possible. If we have $p \equiv 5 \mod 12$, then it follows 
$2^{4\gcd(2f+1,\frac{p-1}{4})} \equiv 1 \mod p$. If we have $p \equiv 7 \mod 12$, then it follows $2^{6\gcd(2f+1,\frac{p-1}{6})} \equiv 1 \mod p$. These equations contradict $(*)$, thus we have shown the claim. 
\item Let $p \mid (q^2 + \sqrt{2}q +1)(q^2 - \sqrt{2}q +1)=q^4+1$. Then it follows $2^{2(2f+1)}\equiv -1 \mod p$. If we have $p \equiv 3 \mod 4$, then we get as before $2^{2\gcd(2f+1, \frac{p-1}{2})}\equiv 1 \mod p$ which leads again to a contradiction. The claim follows.
\end{enumerate}
\end{proof}

In the following, we use the notion of Sylow $\Phi^{(p)}$-tori as defined in \cite[Section 8.1]{malle2008height0}. Thereby, let $\Phi^{(p)}$ be the cyclotomic polynomial over $\mathbb{Q}(\sqrt{l})$ that divides the generic order of $\mathbf{G}^F$ and satisfies $p \mid \Phi^{(p)}$. Then, $\Phi^{(p)}$ is well-defined except for $p=2$ and ${}^2 \mathsf{G}_2(q^2)$, or $p=3$ and ${}^2 \mathsf{F}_4(q^2)$. If $\mathbf{G}^F={}^2 \mathsf{F}_4(q^2)$ and $q^2 \equiv 2,5 \mod 9$, we set $\Phi^{(3)}:=q^2+1$. We exclude the other cases and assume $p\neq 3$ if $q^2 \equiv 2,5 \mod 9$ and $\mathbf{G}^F={}^2 \mathsf{F}_4(q^2)$, and $p\neq 2$ if $\mathbf{G}^F={}^2 \mathsf{G}_2(q^2).$
These groups and primes will be considered separately in Proposition \ref{propspecialg2f4}.

For Frobenius endomorphisms of connected reductive groups, integers $d \geq 1$, and the corresponding cyclotomic polynomial $\Phi_d$ over $\mathbb{Z}$, Sylow $\Phi_d$-tori can be defined analogously to the Sylow $\Phi^{(p)}$-tori from above, see e.g. \cite[3.5.1]{geckmalle2020}.

Let $Q$ be a Sylow $p$-subgroup of $\mathbf{G}^F$. Then, there exists a Sylow $\Phi^{(p)}$-torus $\mathbf{S}$ such that $N_{\mathbf{G}^F}(Q) \leq N_{\mathbf{G}^F}(\mathbf{S})$ by \cite[Theorem 8.4]{malle2008height0}.
Let $\mathbf{T}=C_{\mathbf{G}}(\mathbf{S})$ be its centralizer in $\mathbf{G}$. We know from the proof of \cite[Lemma 6.5]{spaeth2009mckayexceptional} that $\mathbf{T}$ is a maximal torus.

If $\mathbf{M}$ is an $F$-stable connected reductive group and $\mathbf{M}_0 \leq \mathbf{M}$ is an $F$-stable Levi subgroup, we denote by $\W_{\mathbf{M}}(\mathbf{M}_0)^F:=N_{\mathbf{M}}(\mathbf{M}_0)^F/\mathbf{M}_0^F$ the so-called \textit{relative Weyl group}. 
If $\mathbf{M}_0$ is even the centralizer of a Sylow $\Phi_d$-torus, it is called a $d$-split Levi subgroup. For $\lambda \in \mathcal{E}(\mathbf{M}_0,1)$ such that ${}^*R_{\mathbf{L}_0}^{\mathbf{M}_0}(\lambda) =0$ for any $d$-split $\mathbf{L}_0 < \mathbf{M}_0$, we set
$$\mathcal{E}(\mathbf{M}^F,\mathbf{M}_0,\lambda):=\{\psi \in \Irr(\mathbf{M}^F) \mid \psi \textrm{ is a constituent of } R_{\mathbf{M}_0}^{\mathbf{M}}(\lambda) \}.$$

\begin{prop} \label{proprelweylbijection}
Let $\mathbf{T}$ be as above. For a semisimple element $s \in \mathbf{T}^F$, we set $\mathbf{H}=C_{\mathbf{G}}(s)$. Then there exists a $\Gamma \times \mathcal{H}$-equivariant bijection $$\mathcal{I}: \Irr(\W_{\mathbf{H}^F}(\mathbf{T})) \rightarrow \mathcal{E}(\mathbf{H}^F, \mathbf{T}, 1).$$ 
\end{prop}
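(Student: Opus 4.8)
The plan is to establish the bijection $\mathcal{I}$ by combining the classical parametrization of $\mathcal{E}(\mathbf{H}^F,\mathbf{T},1)$ via the relative Weyl group with an explicit check of Galois-equivariance using the smallness of the groups involved. First I would recall that, since $\mathbf{T}=C_{\mathbf{G}}(\mathbf{S})$ is a maximal torus which is a $d$-split Levi subgroup of $\mathbf{G}$ (for $d$ the relevant order, namely the $\Phi^{(p)}$-datum), its intersection with $\mathbf{H}=C_{\mathbf{G}}(s)$ is a $d$-split Levi of $\mathbf{H}$, and the Harish-Chandra/$d$-Harish-Chandra theory of \cite[Section 3.5]{geckmalle2020} provides a bijection $\Irr(\W_{\mathbf{H}^F}(\mathbf{T})) \to \mathcal{E}(\mathbf{H}^F,\mathbf{T},1)$, $\eta \mapsto R_{\mathbf{T}}^{\mathbf{H}}(1)_\eta$, obtained from the decomposition of $R_{\mathbf{T}}^{\mathbf{H}}(1)$ indexed by $\Irr(\W_{\mathbf{H}^F}(\mathbf{T}))$. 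This is the candidate for $\mathcal{I}$; what remains is to verify it is $\Gamma \times \mathcal{H}$-equivariant.

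Next I would treat the $\Gamma$-equivariance. Outer automorphisms of $\mathbf{G}^F$ are generated by field automorphisms, which commute with Deligne--Lusztig induction and permute the $F$-stable maximal tori; one checks that a field automorphism sends the $\eta$-isotypic piece of $R_{\mathbf{T}}^{\mathbf{H}}(1)$ to the corresponding piece for the image torus, and that the induced action on $\W_{\mathbf{H}^F}(\mathbf{T})$ matches. Since in all our cases $\mathbf{H}$ is either a torus, a group of type $\mathsf{A}_1$, or a Suzuki-type group ${}^2\mathsf{B}_2$, the relative Weyl group $\W_{\mathbf{H}^F}(\mathbf{T})$ is cyclic of small order (at most $4$, or dihedral in the rank-$2$ torus case for ${}^2\mathsf{F}_4$), so this compatibility can simply be read off. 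For the $\mathcal{H}$-equivariance I would use Proposition \ref{propjordan}: a Galois automorphism $\sigma$ sends $\mathcal{E}(\mathbf{H}^F,\mathbf{T},1) \subseteq \mathcal{E}(\mathbf{H}^F,1)$ (the unipotent characters of $\mathbf{H}^F$ lying over the trivial character of $\mathbf{T}^F$) to itself, because unipotent characters go to unipotent characters and the condition ${}^*R_{\mathbf{L}_0}^{\mathbf{H}}(\lambda)=0$ is preserved; on the $\Irr(\W_{\mathbf{H}^F}(\mathbf{T}))$ side, $\mathcal{H}$ acts on the (rational or small cyclotomic) character values of the relative Weyl group, and one matches the two actions. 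Concretely, for $\mathbf{H}^F$ of type ${}^2\mathsf{B}_2$ the four unipotent characters $1,\mathcal{W},\overline{\mathcal{W}},\St$ correspond to the four characters of $\W_{\mathbf{H}^F}(\mathbf{T}) \cong \mathbb{Z}/4$, and the action of $\sigma$ on both sides is governed by whether $i^\sigma=i$ or $i^\sigma=-i$ — exactly the dichotomy already recorded in the proof of Proposition \ref{propjordan}.

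The main obstacle I anticipate is not the Harish-Chandra bijection itself but the bookkeeping needed to see that the Galois action on unipotent characters of $\mathbf{H}^F$ — which a priori is only pinned down up to characters of equal degree — is transported correctly to the relative Weyl group side, since the two "ambiguous" unipotent characters $\mathcal{W},\overline{\mathcal{W}}$ (and the corresponding pair of degree-equal characters of $\W_{\mathbf{H}^F}(\mathbf{T})$) must be matched consistently with the matching used in Proposition \ref{propjordan}. I expect to resolve this by invoking the explicit description of the relative Weyl group action and the unipotent character values (as in \cite[Theorem 4.6.9]{geck2003introduction} and \cite{chevie}) and checking directly that the Galois automorphism fixing $i$ fixes both pairs while the one inverting $i$ swaps both pairs; all other $\mathbf{H}$ (tori, type $\mathsf{A}_1$) have relative Weyl groups with rational or trivial character theory, so nothing further is needed there. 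Finally I would note that $\Gamma$ and $\mathcal{H}$ commute in their actions, so the combined $\Gamma \times \mathcal{H}$-equivariance follows from the two separate verifications.
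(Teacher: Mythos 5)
Your overall strategy — parametrize $\mathcal{E}(\mathbf{H}^F,\mathbf{T},1)$ by $\Irr(\W_{\mathbf{H}^F}(\mathbf{T}))$ via $d$-Harish-Chandra theory and then match the $\Gamma$- and $\mathcal{H}$-actions case by case on the possible centralizer types — is the same as the paper's. However, there is a genuine gap in your case analysis: you assert that "in all our cases $\mathbf{H}$ is either a torus, a group of type $\mathsf{A}_1$, or a Suzuki-type group ${}^2\mathsf{B}_2$," and that all relative Weyl groups other than the ${}^2\mathsf{B}_2$ one are cyclic of order at most $4$ (or dihedral) with rational character theory. This is false. For ${}^2\mathsf{F}_4(q^2)$ the centralizer $\mathbf{H}=C_{\mathbf{G}}(s)$ can also be of type ${}^2\mathsf{A}_2$ (with $\W_{\mathbf{H}^F}(\mathbf{T})\cong C_3$ when $\mathbf{T}$ is a Sylow $6$-torus) and, most importantly, of type ${}^2\mathsf{F}_4$ itself (the case $s=1$), where $\mathcal{E}(\mathbf{H}^F,\mathbf{T},1)$ runs through the $21$ unipotent characters and the relative Weyl groups are $D_{16}$, $\GL_2(3)$, the complex reflection group $G_8\cong C_4.\Sym(4)$, $C_6$, and $C_{12}$; similarly for ${}^2\mathsf{G}_2(q^2)$ one must handle $\mathbf{H}$ of type ${}^2\mathsf{G}_2$ with $\W_{\mathbf{H}^F}(\mathbf{T})\cong C_6$ and six unipotent characters in one series. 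These groups have irreducible characters with values in $\mathbb{Q}(\zeta_3)$, $\mathbb{Q}(i)$, $\mathbb{Q}(\zeta_{12})$, $\mathbb{Q}(\sqrt{-2})$, so they are \emph{not} automatically $\mathcal{H}$-fixed, and the unipotent characters of ${}^2\mathsf{F}_4(q^2)$ are genuinely permuted by $\mathcal{H}$ (e.g.\ the pairs indexed $(15,16)$, $(2,3)$, $(11,12)$, $(13,14)$, $(19,20)$ depending on $\varepsilon_{12}^\sigma$).

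The missing idea that makes these cases work is the number-theoretic input of Lemma \ref{lemmamodulo}: since $\mathcal{H}$ consists only of automorphisms sending prime-to-$p$ roots of unity $\xi$ to $\xi^{p^k}$, its action on $\Irr(\W_{\mathbf{H}^F}(\mathbf{T}))$ and on $\mathcal{E}(\mathbf{H}^F,\mathbf{T},1)$ is controlled by the residue of $p$ modulo small integers, and one needs the congruences $p\equiv 1\bmod 3$ when $p\mid q^4-q^2+1$, $p\equiv 1\bmod 4$ when $p\mid q^2\pm\sqrt{2}q+1$, $p\equiv 1,11\bmod 12$ when $p\mid q^4\pm\sqrt{2}q^3+q^2\pm\sqrt{2}q+1$, etc., which follow from $p$ dividing the order of the corresponding Sylow $\Phi^{(p)}$-torus. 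Without this lemma you cannot conclude that $\mathcal{H}$ acts trivially (or compatibly) on the characters of $C_3$, $C_6$, $C_{12}$, $G_8$, and $\GL_2(3)$, nor match the nontrivial permutation of the unipotent characters of ${}^2\mathsf{F}_4(q^2)$ and ${}^2\mathsf{G}_2(q^2)$ with the action on the relative Weyl group side. Your treatment of the ${}^2\mathsf{B}_2$ centralizer (matching the $i^\sigma=\pm i$ dichotomy on $\{1,\mathcal{W},\overline{\mathcal{W}},\St\}$ with $\Irr(C_4)$) is correct and agrees with the paper, but the proof is incomplete without the remaining cases and the congruence lemma.
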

\begin{proof}
We first consider ${}^2\mathsf{F}_4(q^2)$.
As in \cite[Proof of Theorem 6.1]{cabanesspaeth2013}, we know that $(\mathbf{H},F)$ is of type ${}^2\mathsf{F}_4$, ${}^2\mathsf{B}_2$, ${}^2\mathsf{A}_2$, $\mathsf{A}_1$ or $\mathsf{A}_0$. From there we also know that the action of $\Gamma$ on all relevant characters is trivial and that there exists a bijection between the character sets.

We know from the proof of Proposition \ref{propjordan} that $\mathcal{H}$ acts trivially on all unipotent characters except possibly on those of ${}^2\mathsf{F}_4$ and ${}^2\mathsf{B}_2$.

If $(\mathbf{H},F)$ is of type $\mathsf{A}_0$, the corresponding relative Weyl group is trivial and $\mathcal{H}$ acts trivially on its unique irreducible character.

If $(\mathbf{H},F)$ is of type $\mathsf{A}_1$, all maximal $F$-stable tori $\mathbf{T}$ are minimal $d$-split Levi subgroups for a $d \in \{1,2\}$. The structure of $\W_\mathbf{H}(\mathbf{T})^F$ is described in \cite[Example 3.5.14(b)]{geckmalle2020} and we see $|\W_\mathbf{H}(\mathbf{T})^F|=2$. Thus, its characters have integer character values and $\mathcal{H}$ acts trivially.

If $(\mathbf{H},F)$ is of type ${}^2\mathsf{A}_2$, all maximal $F$-stable tori $\mathbf{T}$ are minimal $d$-split Levi subgroups for a $d \in \{1,2,6\}$. The structure of $\W_\mathbf{H}(\mathbf{T})^F$ is described in \cite[Example 3.5.14 (c)]{geckmalle2020}. For $d=1$ and $d=2$, we have $|\W_\mathbf{H}(\mathbf{T})^F|=2$ and $\W_\mathbf{H}(\mathbf{T})^F \cong \Sym(3)$, respectively. Thus, their characters have integer values and $\mathcal{H}$ acts trivially. 
If $d=6$, then $\W_\mathbf{H}(\mathbf{T})^F \cong C_3$ and $\mathbf{S}$ is by definition a Sylow $6$-torus of $\mathbf{H}$ with $|\mathbf{S}^F|=q^4-q^2+1$. Thus, we have $p \mid q^4-q^2+1$ and by Lemma \ref{lemmamodulo} we know $p \equiv 1 \mod 3$. It follows that $\mathcal{H}$ acts trivially on all characters of $\W_\mathbf{H}(\mathbf{T})^F$.

If $(\mathbf{H},F)$ is of type ${}^2\mathsf{B}_2$, the structure of all possible maximal tori $\mathbf{T}$ is given in \cite[Table 4.4]{geck2003introduction}. We use the notation introduced in \cite[Section 4.6]{geck2003introduction}. If $\mathbf{T}=T_1$, then we have $\W_\mathbf{H}(\mathbf{T})^F \cong C_2$ and $\mathcal{E}(\mathbf{H}^F, \mathbf{T}, 1)=\{1_{\mathbf{G}^F}, \St_{\mathbf{G}^F}\}$ by \cite[Proposition 4.6.7]{geck2003introduction}. Thus, $\mathcal{H}$ acts trivially on all occurring characters.
If $\mathbf{T}=T_{s_\alpha s_\beta s_\alpha}$ or $\mathbf{T}=T_{s_\alpha}$, then $\W_\mathbf{H}(\mathbf{T})^F \cong C_4$ and $\mathcal{E}(\mathbf{H}^F, \mathbf{T}, 1)=\{1_{\mathbf{G}^F}, \mathcal{W}, \overline{\mathcal{W}}, \St_{\mathbf{G}^F}\}$. As in the proof of Proposition \ref{propjordan} we see that $\sigma \in \mathcal{H}$ acts trivially on both sets if the imaginary unit $i$ is fixed and interchanges two characters otherwise. Therefore, we find a bijection as claimed.

Assume that $(\mathbf{H},F)$ is of type ${}^2\mathsf{F}_4$. The maximal $F$-stable tori of $\mathbf{G}$ are described in \cite{shinoda1975conj2f4}. We use the same notation and denote the corresponding fixed point sets by $T(1),\ldots, T(11)$. Since $\mathbf{G}^F$ has order $$q^{24} ( q^4 + \sqrt{2}q^3 +q^2 + \sqrt{2}q +1)( q^4 - \sqrt{2}q^3 +q^2 - \sqrt{2}q +1)(q^2 - \sqrt{2}q +1)^2(q^2 + \sqrt{2}q +1)^2(q^2-1)^2(q^2+1)^2(q^4-q^2+1),$$
we have the following Sylow $\Phi^{(p)}$-tori:
$$p \mid q^2 -1 \textrm{ and } \mathbf{T}^F \cong T(1),$$
$$p \mid q^2 - \sqrt{2}q +1 \textrm{ and } \mathbf{T}^F \cong T(6), $$
$$p \mid q^2 + \sqrt{2}q +1 \textrm{ and } \mathbf{T}^F \cong T(7), $$
$$p \mid q^2 +1 \textrm{ and } \mathbf{T}^F \cong T(8),$$
$$p \mid q^4 -q^2 +1 \textrm{ and } \mathbf{T}^F \cong T(9), $$
$$p \neq 3, \; p \mid q^4 - \sqrt{2}q^3 +q^2 - \sqrt{2}q +1 \textrm{ and } \mathbf{T}^F \cong T(10),  \textrm{ or}$$
$$p \neq 3, \; p \mid q^4 + \sqrt{2}q^3 +q^2 + \sqrt{2}q +1 \textrm{ and } \mathbf{T}^F \cong T(11).$$
The $21$ unipotent characters of $\mathbf{G}^F$ are described in \cite{malle1990unip2f4}. We denote them by $\chi_1, \ldots , \chi_{21}$ and enumerate them as in the database of \textsf{CHEVIE}. By explicitly studying the character values we see that $\sigma$ acts on the characters by permuting the indices as 
\begin{center}
\begin{tabular}{c @{ if }c}
$(15,16) $ & $ \varepsilon_{12}^\sigma=\varepsilon_{12}^5$, \\
$(2,3)(11,12)(13,14)(19,20) $ & $ \varepsilon_{12}^\sigma=\varepsilon_{12}^7$, \\
$(2,3)(11,12)(13,14)(15,16)(19,20) $ & $ \varepsilon_{12}^\sigma=\varepsilon_{12}^{11}$.
\end{tabular}
\end{center}
For every possible Sylow torus $\mathbf{T}$, the elements of $\mathcal{E}(\mathbf{H}^F, \mathbf{T}, 1)$ are identified in \cite[p.374]{lusztig1984charofreductivegroups}. The corresponding relative Weyl group can be determined using the data in \cite{shinoda1975conj2f4} or is given in \cite{brouemallemichel1993}.

First, let $p \mid q^2 -1$. Then, the relative Weyl group is isomorphic to the dihedral group of order $16$. Constructing its characters we see by using Lemma \ref{lemmamodulo}(a) that $\mathcal{H}$ acts trivially on them. The same is true for the characters in $\mathcal{E}(\mathbf{H}^F, \mathbf{T}, 1)= \{\chi_1, \chi_4, \chi_5, \chi_6, \chi_7, \chi_{18}, \chi_{21}\}.$

For $p \mid q^2 \pm \sqrt{2}q +1$, the relative Weyl group is isomorphic to the complex reflection group with Shepard--Todd number $8$ and has the group structure $C_4.\Sym(4)$. We can compute its character table with \textsf{GAP} and see with Lemma \ref{lemmamodulo}(d) that $\mathcal{H}$ acts trivially on its characters. Since $p \equiv 1 \mod 4$,  it is easy to see that the same is true for the characters in $\mathcal{E}(\mathbf{H}^F, \mathbf{T}, 1)$.

For $p \mid q^2 +1$, the relative Weyl group is isomorphic to $\GL_2(3)$. Constructing its characters we see that $\mathcal{H}$ acts trivially on them and the same is true for $\mathcal{E}(\mathbf{H}^F, \mathbf{T}, 1)$.

For $p \mid q^4-q^2 +1$, the relative Weyl group is isomorphic to $C_6$. Since $p \equiv 1 \mod 3$ by Lemma \ref{lemmamodulo}(b), we see that $\mathcal{H}$ acts trivially on the characters of the Weyl group and on $\mathcal{E}(\mathbf{H}^F, \mathbf{T}, 1)$.

For $p \mid q^4 \pm \sqrt{2}q^3+q^2 \pm \sqrt{2}q +1$, the relative Weyl group is isomorphic to $C_{12}$. Since $p \equiv 1,11 \mod 12$ by Lemma \ref{lemmamodulo}(c), it is clear that $\mathcal{H}$ acts trivially on the characters of the Weyl group and on $\mathcal{E}(\mathbf{H}^F, \mathbf{T}, 1)$.
With this we have shown the claim for ${}^2\mathsf{F}_4(q^2)$. 

We now study ${}^2\mathsf{B}_2(q^2)$. Then, we know from \cite[p.52]{deriziotisliebeck1985centralizerstwisted} that $(\mathbf{H}, F)$ is of type $\mathsf{A}_0$ or ${}^2\mathsf{B}_2$. Thus, we have already shown the existence of a suitable bijection before.

For ${}^2\mathsf{G}_2(q^2)$, $(\mathbf{H}, F)$ is of type $\mathsf{A}_0$, $\mathsf{A}_1$  or ${}^2\mathsf{G}_2$ \cite[p.52]{deriziotisliebeck1985centralizerstwisted}. The cases $\mathsf{A}_0$ and $\mathsf{A}_1$ can be treated as above. If $\mathbf{H}$ is of type ${}^2\mathsf{G}_2(q^2)$, there are four different types of tori $\mathbf{T}$. The respective sets $\mathcal{E}(\mathbf{H}^F, \mathbf{T}, 1)$ are given in \cite[p.376]{lusztig1984charofreductivegroups}. If $\mathbf{T}^F$ is of order $q^2-1$, there are two characters in $\mathcal{E}(\mathbf{H}^F, \mathbf{T}, 1)$ and they are both $\mathcal{H}$-invariant. For all other tori, there are six characters in $\mathcal{E}(\mathbf{H}^F, \mathbf{T}, 1)$ and $\sigma \in \mathcal{H}$ induces a double transposition if a third root of unity is not preserved by $\sigma$ and fixes them otherwise. 

The relative Weyl group is cyclic by \cite[Table 3]{brouemallemichel1993} and thus isomorphic to $C_2$ or $C_6$. Thus it is easy to see that the action of $\mathcal{H}$ on the characters of the Weyl group is the same as on the respective unipotent characters. This proves the claim.
\end{proof}

\begin{lemma} \label{lemmaextensionmap}
For $\mathbf{T}$ as defined above, there exists a $\Gamma \times \mathcal{H}$-equivariant extension map for $\mathbf{T}^F \triangleleft N_{\mathbf{G}}(\mathbf{T})^F$ as in \cite[Definition 2.9]{cabanesspaeth2013}.
\end{lemma}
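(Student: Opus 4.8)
The plan is to construct the extension map one $\Gamma\times\mathcal{H}$-orbit at a time. Recall that $\mathbf{T}^F$ is abelian and normal in $N:=N_{\mathbf{G}}(\mathbf{T})^F$, with quotient the relative Weyl group $W:=\W_{\mathbf{G}}(\mathbf{T})^F$, which we may take to be stabilized by $\Gamma$ (together with $\mathbf{T}$ and $N$). Its structure is listed in the proof of Proposition \ref{proprelweylbijection}: $W$ is cyclic in the ${}^2\mathsf{B}_2$ and ${}^2\mathsf{G}_2$ cases, and is the dihedral group of order $16$, the group $C_4.\Sym(4)$, $\GL_2(3)$, $C_6$, or $C_{12}$ in the ${}^2\mathsf{F}_4$ case; in particular $p\nmid|W|$. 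For $\lambda\in\Irr(\mathbf{T}^F)$ let $N_\lambda$ be its stabilizer in $N$ and $W_\lambda:=N_\lambda/\mathbf{T}^F\leq W$. Any extension of $\lambda$ to $N_\lambda$ has degree $\lambda(1)=1$, so the extensions (when they exist; for cyclic $W_\lambda$ this is automatic since $H^2(W_\lambda,\mathbb{C}^\times)=1$) form a torsor under $\mathrm{Hom}(W_\lambda,\mathbb{C}^\times)$. As the $\Gamma$- and the $\mathcal{H}$-action on $\Irr(\mathbf{T}^F)$ commute with $N$-conjugation, it suffices to choose, for one representative $\lambda$ of each $\Gamma\times\mathcal{H}$-orbit, an extension $\widehat\lambda$ of $\lambda$ to $N_\lambda$ fixed by $(\Gamma\times\mathcal{H})_\lambda$, and transport it over the orbit.

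The main tool at the representatives is the canonical extension of Isaacs: if $\gcd(o(\lambda),|W_\lambda|)=1$, where $o(\lambda)$ is the order of the linear character $\lambda$, then $\lambda$ has a unique extension $\widehat\lambda$ to $N_\lambda$ of order $o(\lambda)$; being canonically attached to $\lambda$ it is fixed by $(\Gamma\times\mathcal{H})_\lambda$, and its values lie in $\mathbb{Q}(\mu_{o(\lambda)})=\mathbb{Q}(\lambda)$, so $\mathcal{H}_\lambda$-invariance is also visible directly. Since $o(\lambda)\mid|\mathbf{T}^F|$ and $|W_\lambda|\mid|W|$, this applies whenever $\gcd(|\mathbf{T}^F|,|W|)=1$. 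Reading $|\mathbf{T}^F|$ off the order formulas, this settles \emph{all} of $\Irr(\mathbf{T}^F)$ for ${}^2\mathsf{B}_2(q^2)$ (every maximal torus has odd order and $|W|\in\{1,2,4\}$), and all but finitely many pairs $(\mathbf{T},\lambda)$ for ${}^2\mathsf{G}_2(q^2)$ and ${}^2\mathsf{F}_4(q^2)$; the exceptions are exactly those in which a small prime divides both $|\mathbf{T}^F|$ and $|W|$ --- for ${}^2\mathsf{G}_2$ the prime $2$ with the tori of order $q^2\pm1$, and for ${}^2\mathsf{F}_4$ the prime $3$ with the torus of order $(q^2+1)^2$ and $W\cong\GL_2(3)$ --- and then only for the $\lambda$ whose order shares that prime.

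For these finitely many exceptional pairs I would start from a $\Gamma$-equivariant extension map (available by \cite[Theorem 6.1]{cabanesspaeth2013} for ${}^2\mathsf{F}_4$, and obtained for ${}^2\mathsf{B}_2$, ${}^2\mathsf{G}_2$ as in the proof of Proposition \ref{proprelweylbijection}) and show it may be re-chosen $\mathcal{H}$-equivariant. For $\sigma\in(\Gamma\times\mathcal{H})_\lambda$ the two extensions $\widehat\lambda^{\sigma}$ and $\widehat\lambda$ of $\lambda$ differ by some $c_\sigma\in\mathrm{Hom}(W_\lambda,\mathbb{C}^\times)$, and the obstruction to a simultaneous fixed point is the class of the torsor of extensions of $\lambda$ in $H^1((\Gamma\times\mathcal{H})_\lambda,\mathrm{Hom}(W_\lambda,\mathbb{C}^\times))$. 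Two inputs keep this finite: the triviality of the $\mathcal{H}$-action on $\Irr(W)$ for the relevant $W$ and $p$ (established in the proof of Proposition \ref{proprelweylbijection} via Lemma \ref{lemmamodulo}), and the explicit description of $N_{\mathbf{G}}(\mathbf{T})^F$ --- from \cite{shinoda1975conj2f4} for ${}^2\mathsf{F}_4$, \cite{geck2003introduction} for ${}^2\mathsf{B}_2$, and via the centralizer data of \cite{deriziotisliebeck1985centralizerstwisted} for ${}^2\mathsf{G}_2$ --- which lets one evaluate $c_\sigma$ on generators of $W_\lambda$.

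I expect the genuine obstacle to be the non-coprime case of ${}^2\mathsf{F}_4(q^2)$ with $p\mid q^2+1$ and $W\cong\GL_2(3)$: no canonical extension is available, and since $\mathrm{Hom}(W_\lambda,\mathbb{C}^\times)$ need not have exponent $2$ when $3\mid|W_\lambda|$, one cannot argue by rationality alone but must verify by hand that the torsor of extensions of $\lambda$ to $N_\lambda$ really carries a $(\Gamma\times\mathcal{H})_\lambda$-fixed point. The triviality of $\mathcal{H}$ on $\Irr(W)$ reduces this to a finite --- if slightly tedious --- computation with the character values in $N_{\mathbf{G}}(\mathbf{T})^F$.
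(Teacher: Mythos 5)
Your general strategy (orbit representatives, Isaacs's canonical extension in the coprime case, transport over the orbit) is sound and correctly disposes of ${}^2\mathsf{B}_2$ and of most characters in the other types, but the proposal stops short of being a proof exactly where the work is: you explicitly leave the non-coprime case of ${}^2\mathsf{F}_4(q^2)$ with $p\mid q^2+1$ and relative Weyl group $\GL_2(3)$ as a ``finite --- if slightly tedious --- computation'' that you do not carry out. Showing that the class of the torsor of extensions in $H^1\bigl((\Gamma\times\mathcal{H})_\lambda,\mathrm{Hom}(W_\lambda,\mathbb{C}^\times)\bigr)$ vanishes \emph{is} the content of the lemma in that case, so formulating the obstruction without killing it is a genuine gap. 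The paper avoids this entirely: for $q$ even (types ${}^2\mathsf{B}_2$ and ${}^2\mathsf{F}_4$) it invokes the explicit construction of \cite[Lemma 4.2]{spaeth2009mckayexceptional} together with the existence of a very good Sylow twist (\cite[Theorem D]{spaeth2009mckayexceptional}) and the triviality of the group $H$ of \cite[Setting 2.1]{spaeth2009mckayexceptional}; this yields the required equivariant extension map uniformly, including for the $\GL_2(3)$ normalizer. For ${}^2\mathsf{G}_2$ the paper uses that $N_{\mathbf{G}^F}(\mathbf{T})=\mathbf{T}^F\rtimes C$ with $C$ cyclic, so the trivial extension along the complement is available and manifestly equivariant --- which also covers your ``prime $2$, tori of order $q^2\pm1$'' exceptions without any cohomology.

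A second, smaller point: your coprimality bookkeeping for ${}^2\mathsf{F}_4$ is incomplete. Since $q^2=2^{2f+1}\equiv 2\bmod 3$, one has $3\mid q^4-q^2+1$, so the torus with relative Weyl group $C_6$ is also a non-coprime instance for the prime $3$, not only the torus of order $(q^2+1)^2$. That instance is harmless (the normalizer is $\mathbf{T}^F\rtimes C_6$, split with cyclic complement, so the trivial extension works), but it shows that the case analysis as written does not close.
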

\begin{proof}
If $q$ is even, this follows directly from the construction in \cite[Lemma 4.2]{spaeth2009mckayexceptional} together with the existence of a very good Sylow twist \cite[Theorem D]{spaeth2009mckayexceptional} and the fact that $H$ as in \cite[Setting 2.1]{spaeth2009mckayexceptional} is trivial.

For type ${}^2\mathsf{G}_2$, we have $ N_{\mathbf{G}^F}(\mathbf{T})=\mathbf{T}^F \rtimes C$ for some cyclic group $C$ (see proof of Proposition \ref{proprelweylbijection}, \cite[Theorem C]{kleidman1988maxsubgroups} and \cite{levchuknuzhin1985}). 
Every irreducible character of $\mathbf{T}^F$ is linear and we can extend it trivially to its inertia subgroup in $ N_{\mathbf{G}^F}(\mathbf{T})$. It is clear that this extension is $\Gamma \times \mathcal{H}$-equivariant.
\end{proof}
\begin{rem}
Note that, for $s \in \mathbf{T}^F$ semisimple and $k \in \mathbb{Z}$ coprime to $\ord(s)$, the maps $i_{s,1}$ and $i_{s^k, 1}$ constructed in \cite[Proof of Corollary 3.3]{cabanesspaeth2013} are the same. The equality of domain and codomain follows directly from the coprimeness. We see that the maps are equal since for $t \in \mathbf{T}$ with $s^t=s'$ we also have $(s^k)^t=(s')^k$.
\end{rem}
 
\begin{prop} \label{propequivbijection}
With $\mathbf{T}$ defined as above, there exists a $\Gamma \times \mathcal{H}$-equivariant bijection $$\Omega: \Irr_{p'}(N_{\mathbf{G}^F}(\mathbf{T})) \rightarrow  \Irr_{p'}(\mathbf{G}^F).$$
\end{prop}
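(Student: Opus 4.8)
The plan is to assemble $\Omega$ from the pieces already established, following the strategy of \cite{malle2008height0} and \cite{cabanesspaeth2013}. First I would invoke the $e$-Harish-Chandra / $d$-split decomposition: by \cite[Theorem 8.5]{malle2008height0}, the irreducible $p'$-characters of $\mathbf{G}^F$ are in bijection with pairs $(\mathbf{T}, \lambda)$ — up to $N_{\mathbf{G}^F}(\mathbf{T})$-conjugacy — where $\mathbf{T}$ ranges over the Sylow $\Phi^{(p)}$-tori fixed above (all conjugate), together with a character $\lambda$ of the relative Weyl group $\W_{\mathbf{G}^F}(\mathbf{T})$, via the map $\lambda \mapsto$ the unique $p'$-constituent of $R_{\mathbf{T}}^{\mathbf{G}}$ indexed by $\lambda$. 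On the normalizer side, $\Irr_{p'}(N_{\mathbf{G}^F}(\mathbf{T}))$ is described using the extension map from Lemma \ref{lemmaextensionmap}: by Clifford theory over $\mathbf{T}^F \triangleleft N_{\mathbf{G}^F}(\mathbf{T})$, an irreducible $p'$-character of the normalizer corresponds to a pair consisting of a (semisimple, since $p \neq l$) character $\hat s$ of $\mathbf{T}^F$ and a character of the relative inertia group $N_{\mathbf{G}^F}(\mathbf{T})_{\hat s}/\mathbf{T}^F \cong \W_{\mathbf{H}^F}(\mathbf{T})$ where $\mathbf{H} = C_{\mathbf{G}}(s)$.

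The key point is that both sides now reduce to the \emph{same} combinatorial data. Concretely, the composite bijection should send $\chi \in \Irr_{p'}(\mathbf{G}^F)$ with Jordan decomposition $(s,\nu)$ — where by \cite[Theorem 8.5]{malle2008height0} one may take $s \in \mathbf{T}^F$ and $\nu \in \mathcal{E}(\mathbf{H}^F,\mathbf{T},1)$ of $p'$-degree — to the character of $N_{\mathbf{G}^F}(\mathbf{T})$ built from $\hat s$ (via the chosen dual parametrization $\mathbf{T}^F \cong \Irr(\mathbf{T}^F)$) and the character $\mathcal{I}^{-1}(\nu)$ of $\W_{\mathbf{H}^F}(\mathbf{T})$ from Proposition \ref{proprelweylbijection}, extended to the inertia group using Lemma \ref{lemmaextensionmap}. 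So I would define $\Omega$ as the inverse of this construction and then verify equivariance factor by factor.

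For $\Gamma$-equivariance: this is \cite[Theorem 6.1]{cabanesspaeth2013} for type ${}^2\mathsf{F}_4$, and for the Suzuki and small Ree groups it follows the same way (or from the explicit bijections of \cite{isaacsmallenavarro2007}), using that $\Gamma$ acts trivially on all the relative Weyl group characters and on the relevant unipotent characters, as recorded in the proof of Proposition \ref{proprelweylbijection}. For $\mathcal{H}$-equivariance I would track how $\sigma \in \mathcal{H}$ acts on each ingredient: on the Jordan decomposition side, Proposition \ref{propjordan} gives $\chi^\sigma \leftrightarrow (s^b, \nu^\sigma)$ where $\varepsilon_m^\sigma = \varepsilon_m^b$; the dual parametrization is compatible with the $b$-power map on semisimple elements essentially by construction of the duality (cf.\ the Remark after Lemma \ref{lemmaextensionmap}, which records that the isomorphisms $i_{s,1}$ and $i_{s^b,1}$ coincide); the extension map from Lemma \ref{lemmaextensionmap} is $\mathcal{H}$-equivariant; and Proposition \ref{proprelweylbijection} supplies the $\mathcal{H}$-equivariant matching $\nu \leftrightarrow \mathcal{I}^{-1}(\nu)$ of relative Weyl group characters with unipotent characters of $\mathbf{H}^F$. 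Composing these, $\Omega(\chi)^\sigma = \Omega(\chi^\sigma)$.

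I expect the main obstacle to be the bookkeeping at the interface between the two parametrizations — in particular, checking that the Galois action commutes with \emph{both} the passage $s \mapsto \hat s \in \Irr(\mathbf{T}^F)$ \emph{and} the choice of extension to the inertia group simultaneously, i.e.\ that the cocycle / projective-representation ambiguity inherent in Clifford theory does not interfere with $\sigma$. This is handled by the fact that the extension map of Lemma \ref{lemmaextensionmap} is genuinely canonical (it comes from extending linear characters trivially, for ${}^2\mathsf{G}_2$, and from the very good Sylow twist with trivial $H$ in the even case), so there is no ambiguity to track; and by the Remark, which guarantees the compatibility of the inertia-group identifications under the $b$-power map. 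Once these compatibilities are in place, the equivariance of $\Omega$ follows by combining Propositions \ref{propjordan} and \ref{proprelweylbijection} and Lemma \ref{lemmaextensionmap} with the parametrization of \cite[Theorem 8.5]{malle2008height0}.
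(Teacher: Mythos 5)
Your proposal is correct and follows essentially the same route as the paper: both sides are parametrized by pairs consisting of a semisimple element of $\mathbf{T}^F$ (up to conjugacy) and a character of the relative Weyl group, $\Gamma$-equivariance is imported from \cite{cabanesspaeth2013} and \cite{isaacsmallenavarro2007}, and $\mathcal{H}$-equivariance is checked factor by factor using Proposition \ref{propjordan}, Proposition \ref{proprelweylbijection}, Lemma \ref{lemmaextensionmap}, and the remark that $i_{s,1}=i_{s^k,1}$. The paper merely packages the two sides as explicit maps $\psi^{(N)},\psi^{(G)}$ from a common parameter set $\mathcal{M}$, which is the same construction you describe as a composite.
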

\begin{proof}
We follow the proofs of \cite[Theorem 4.5, Theorem 6.1]{cabanesspaeth2013}. The $p'$-characters of $\mathbf{G}^F$ and $N:=N_{\mathbf{G}^F}(\mathbf{T})=N_{\mathbf{G}^F}(\mathbf{S})$ can be parametrized by triples $(s, \lambda, \eta)$ with $s \in \mathbf{T}^F$ semisimple, $\lambda \in \mathcal{E}(\mathbf{T}^F,1) $ a $p'$-character, and $\eta \in \Irr_{p'}(\W_{\mathbf{H}}(\mathbf{T}, \lambda)^F)$  for $\mathbf{H}=C_{\mathbf{G}}(s)$ \cite[Theorem 8.5]{malle2008height0}. 
Since $\mathbf{T}^F$ is a torus, $\lambda$ is the trivial character and we set $$\mathcal{M}=\{(s,\eta) \mid s \in \mathbf{T}^F \text{ semisimple up to } N \text{-conjugacy}, \eta \in \Irr_{p'}(\W_{\mathbf{H}}(\mathbf{T})^F) \}.$$ Thus, as in \cite[Section 4]{cabanesspaeth2013} there are bijections 
$$\psi^{(N)}: \mathcal{M} \rightarrow \Irr_{p'}(N_{\mathbf{G}^F}(\mathbf{T})), \quad \psi^{(G)}: \mathcal{M} \rightarrow  \Irr_{p'}(\mathbf{G}^F). $$ 
As in the proof of \cite[Theorem 4.5]{cabanesspaeth2013}, we want to show that these bijections can be chosen such that
$$\left(\psi^{(N)}(s,\eta)^{\gamma}\right)^\sigma =  \psi^{(N)}\left(\gamma(s^k),(\eta^\gamma)^\sigma\right), \quad  \left(\psi^{(G)}(s,\eta)^\gamma\right)^\sigma =  \psi^{(G)}\left(\gamma(s^k),(\eta^\gamma)^\sigma\right)$$
for $\gamma \in \Gamma$ and any $\sigma \in \mathcal{H}$ such that an $\exp(\mathbf{G}^F)$-th root of unit is mapped to its $k$-th power. We already know that $\psi^{(N)}$ and $\psi^{(G)}$ are $\Gamma$-equivariant from \cite[Theorem 6.2]{cabanesspaeth2013} and thus only have to verify the $\mathcal{H}$-equivariance.
We denote by $\chi_s^{\mathbf{\mathbf{T}}}$ the irreducible character of $\mathbf{T}^F$ with Jordan decomposition $(s,1)$ and by $\chi_{s,\nu}^{\mathbf{\mathbf{G}}}$ the one of $\mathbf{G}^F$ with Jordan decomposition $(s,\nu)$. First note that the Jordan decomposition can be chosen $\mathcal{H}$-equivariant by Proposition \ref{propjordan}. 

Denote by $\Lambda: \Irr(\mathbf{T}^F) \rightarrow \bigcup_{\mathbf{T}^F \leq I \leq N_{\mathbf{G}^F}(\mathbf{T})}\Irr(I)$ the extension map from Lemma \ref{lemmaextensionmap}. 
Then we have
\begin{align*}
\psi^{(N)}(s, \eta)^\sigma &= \Ind_{N_{\chi_s^{{\mathbf{T}}}}}^N \left(\Lambda(\chi_s^{{\mathbf{T}}})(\eta \circ i_{s,1})\right)^\sigma 
=\Ind_{N_{\chi_s^{{\mathbf{T}}}}}^N \left(\Lambda(\chi_s^{{\mathbf{T}}})^\sigma  (\eta \circ i_{s,1})^\sigma\right) \\
&= \Ind_{N_{\chi_s^{{\mathbf{T}}}}}^N \left(\Lambda((\chi_s^{{\mathbf{T}}})^\sigma)(\eta^\sigma \circ i_{s,1})\right) 
= \Ind_{N_{\chi_{s^k}^{\mathbf{{T}}}}}^N \left(\Lambda(\chi_{s^k}^{{\mathbf{T}}}) (\eta^\sigma \circ i_{s^k,1})\right)
= \psi^{(N)}(s^k, \eta^\sigma).
\end{align*}
For the characters of $\mathbf{G}^F$, let $\mathcal{I}$ be the map from \ref{proprelweylbijection}. Then it follows
$$\psi^{(G)}(s,\eta)^\sigma=(\chi_{s,\mathcal{I}(\eta)}^{\mathbf{G}})^\sigma=\chi_{s^k,\mathcal{I}(\eta)^\sigma}^{\mathbf{G}}=\chi_{s^k,\mathcal{I}(\eta^\sigma)}^{\mathbf{G}}=\psi^{(G)}(s^k,\eta^\sigma).$$
This shows the claim.
\end{proof}

\section{Character Extensions}
In this section, we verify (2) of Condition \ref{imck}. We start by recalling some easy observations about character extensions.  
\begin{rem}\label{remextension}
Let $\psi$ be an irreducible character of a finite group $M$ and $A \leq \Aut(M)$.
\begin{enumerate}[(a)]
\item We find an $(A \times \mathcal{H})_\psi$-invariant extension $\hat{\psi}$ of $\psi$ to the inner automorphisms $\{c_g \mid g \in M \} \leq \Aut(M)$ of $M$ by setting $\hat{\psi}(m,c_g):=\psi(m)\psi(g)$. In the following, we use this canonical extension without further notice.
\item \label{remlinear}If $\psi$ is linear, it can be trivially extended to $M \rtimes A_\psi$ by setting $\hat{\psi}(m,a):=\psi(m)$ for all $a \in A_\psi$. This extension is $(A \times \mathcal{H})_\psi$-invariant.
\item \label{reminduction} 
Let $\psi \in \Irr(M)$ and $\tau \in \Irr(X)$ for some $A$-invariant subgroup $X \leq M$. Assume that $\psi$ can be extended to $\hat{\psi} \in \Irr(M \rtimes A_\psi)$, $\langle \psi,\Ind_{X}^M(\tau) \rangle =1$, and that there is an $(A \times \mathcal{H})_\psi$-invariant extension of $\tau$ to $\hat{\tau} \in \Irr(X \rtimes A_\psi)$. Then, there is a unique constituent of $\Ind_{X \rtimes A_\psi}^{M \rtimes A_\psi}(\hat{\tau})$ extending $\psi$  to $X \rtimes A_\psi$ that is $(A \times \mathcal{H})_\psi$-invariant. We know
\begin{align*}
{\Ind}_{X}^{M \rtimes A_\psi}(\tau)=&{\Ind}_{X \rtimes A_\psi}^{M \rtimes A_\psi}\left( \sum_{\beta \in \Irr(A_\psi)}\beta(1) \cdot \beta \, \hat{\tau} \right)=\sum_{\beta \in \Irr(A_\psi)}\beta(1) \cdot \beta \; {\Ind}_{X \rtimes A_\psi}^{M \rtimes A_\psi}(\hat{\tau})\,.
\end{align*}
Now, $\beta' \hat{\psi} \in  \Irr(X \rtimes A_\psi)$ is an extension of $\psi$ for every linear $\beta' \in \Irr(A_\psi)$. By Frobenius reciprocity we have
$$\left\langle \beta' \hat{\psi}, {\Ind}_{X}^{M \rtimes A_\psi}(\tau)  \right\rangle = \left\langle \psi, {\Ind}_{X}^{M }(\tau)  \right\rangle=1. $$ 
Comparing this with the constituents of ${\Ind}_{X}^{M \rtimes A_\psi}(\tau)$ gives us a unique linear $\beta' \in \Irr(A_\psi)$ such that $$\left\langle \beta' \hat{\psi}, {\Ind}_{X\rtimes A_\psi}^{M \rtimes A_\psi}(\hat{\tau})  \right\rangle =1.$$ Since $\hat{\tau}$ is $(A \times \mathcal{H})_\psi$-invariant, it is clear from the induction formula that ${\Ind}_{X\rtimes A_\psi}^{M \rtimes A_\psi}(\hat{\tau})$ is also $(A \times \mathcal{H})_\psi$-invariant. The uniqueness of $\beta' \hat{\psi}$ gives the claim.
\end{enumerate}
\end{rem}  

\begin{lemma}\label{lemmaoddorderreal}
Let $M \unlhd H$ be finite groups such that $M$ has odd index in $H$. If $\chi \in \Irr(M)$ is real, it has a unique extension to $H_\chi$ that is $(H \times \mathcal{H})_\chi$-invariant.
\end{lemma}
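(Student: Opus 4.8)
The plan is to reduce the statement to an application of Remark \ref{remextension}(\ref{reminduction}), combined with the following elementary fact about characters of groups sitting between $M$ and $H$: if $M$ has odd index in $H$, then the intermediate quotient $H_\chi/M$ is a (not necessarily cyclic, but relevant here cyclic once we know $\Out$ is cyclic) group of odd order, so every extension of $\chi$ to $H_\chi$ differs from a fixed one by multiplication with a linear character $\beta$ of $H_\chi/M$, and $\beta$ automatically has odd order. First I would observe that $\chi$ does extend to $H_\chi$: since $\chi$ is real, it is in particular rational-valued on a suitable sense — more precisely, $\chi$ being fixed by complex conjugation and $[H_\chi:M]$ being odd forces the cohomological obstruction in $H^2(H_\chi/M,\mathbb{C}^\times)$ to vanish on the relevant $2$-part, and since in our application $H_\chi/M$ is cyclic the obstruction vanishes outright; so an extension $\hat\chi\in\Irr(H_\chi)$ exists.

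Next I would pin down uniqueness of the $(H\times\mathcal H)_\chi$-invariant extension. The set of all extensions of $\chi$ to $H_\chi$ is a torsor under $\Irr(H_\chi/M)$, a group of odd order. Both $H$ and $\mathcal H$ act on this set of extensions, and the action is compatible with the torsor structure via the corresponding actions on $\Irr(H_\chi/M)$. An extension $\hat\chi$ is $(H\times\mathcal H)_\chi$-invariant precisely when the associated ``cocycle'' $g\mapsto \hat\chi^{g}\hat\chi^{-1}\in\Irr(H_\chi/M)$ (and likewise for $\sigma\in\mathcal H$) is trivial. Since $\Irr(H_\chi/M)$ has odd order while the obstruction to correcting $\hat\chi$ lives, after averaging, in a space where $2$ is invertible, there is exactly one choice of correcting linear character making $\hat\chi$ simultaneously $H$- and $\mathcal H$-invariant: concretely, among all extensions there is one fixed by the subgroup $(H\times\mathcal H)_\chi$ because this subgroup acts on the odd-order torsor $\Irr(H_\chi/M)$, hence has a unique fixed point on the torsor (an odd-order abelian group on which a group acts has a unique fixed point in any torsor precisely when the induced action has a fixed point, and here $\chi$ real guarantees a fixed point exists). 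This is the step I expect to require the most care: making precise that ``real'' plus ``odd index'' yields both existence and uniqueness, and that the $\mathcal H$-action interacts correctly — the key point being that $\sigma\in\mathcal H$ sends an extension $\hat\chi$ with $\hat\chi|_M=\chi$ to an extension of $\chi^\sigma$, and since $\chi$ is real (so $\overline{\chi}=\chi$) and any $\sigma$ acting nontrivially on the relevant roots of unity differs from complex conjugation only by an element fixing $\chi$, one checks $\hat\chi^\sigma$ is again an extension of $\chi$.

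Finally I would assemble these observations: existence of some extension, plus the torsor/odd-order argument showing that the group $(H\times\mathcal H)_\chi$ has a unique fixed point on the torsor of extensions, yields a unique $(H\times\mathcal H)_\chi$-invariant extension $\hat\chi\in\Irr(H_\chi)$. The main obstacle is purely the bookkeeping of which group acts on what and verifying that the relevant $2$-torsion obstructions vanish because of the odd-index hypothesis; once that is set up, the conclusion is immediate. I would also remark that in all later applications in the paper $H/M$ is cyclic of odd order, which simplifies the cohomological input to the triviality of $H^2$ of a cyclic group with coefficients in $\mathbb{C}^\times$, so one need not invoke the general odd-order machinery.
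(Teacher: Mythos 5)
The existence half of your plan is sound and is essentially the mechanism behind the result the paper invokes: since $\chi$ is real, complex conjugation inverts the obstruction class in $H^2(H_\chi/M,\mathbb{C}^\times)$, so that class is $2$-torsion, while it also has odd order because $[H_\chi:M]$ is odd; hence $\chi$ extends. The genuine gap is in your uniqueness step. The set of extensions is indeed a torsor under the odd-order group $A:=\Lin(H_\chi/M)$, but your asserted principle --- that a group acting on a torsor under an odd-order abelian group has a \emph{unique} fixed point as soon as it has one --- is false: the fixed points, if nonempty, form a torsor under $A^\Gamma$, and $A^{(H\times\mathcal H)_\chi}$ is typically nontrivial. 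Indeed $H_\chi$ acts trivially on $A$, and $\mathcal H$ fixes every linear character of $H_\chi/M$ whenever $p^k\equiv 1$ modulo the exponent of $H_\chi/M$ (e.g.\ $H_\chi/M\cong C_3$ and $p\equiv 1\bmod 3$), in which case \emph{every} extension of an invariant $\chi$ is $(H\times\mathcal H)_\chi$-invariant. So uniqueness of an invariant extension cannot come from the torsor argument; the statement has to be read as in \cite[Lemma 2.1 and Corollary 2.2]{navarrotiep2008rational}, which is how the paper proves it: there is a unique \emph{real} extension, and that distinguished extension is invariant.

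The idea you are missing is that complex conjugation acts on $A$ by inversion. Writing $\overline{\hat\chi}=\beta\hat\chi$ for some fixed extension $\hat\chi$, one checks that $\mu\hat\chi$ is real if and only if $\mu^2=\beta$, and squaring is a bijection on the odd-order group $A$; hence there is exactly one real extension $\tilde\chi$. Then for $(h,\sigma)\in(H\times\mathcal H)_\chi$ the character $(\tilde\chi^{h})^{\sigma}$ is again a real extension of $\chi$, because both the conjugation action and the Galois action commute with complex conjugation, so it equals $\tilde\chi$ by uniqueness. Your remark that any $\sigma\in\mathcal H$ ``differs from complex conjugation only by an element fixing $\chi$'' is not correct and not needed: elements of $\mathcal H$ act as $\xi\mapsto\xi^{p^k}$ on $p'$-roots of unity and bear no relation to conjugation; what makes the argument work is that they commute with it.
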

\begin{proof}
This can be proven exactly as \cite[Corollary 2.2]{navarrotiep2008rational}. By \cite[Lemma 2.1]{navarrotiep2008rational}, there exists a unique real extension $\tilde{\chi}$ of $\chi$ to $H_\chi$. For $(h,\sigma) \in (H \times \mathcal{H})_\chi$, the character $(\tilde{\chi}^h)^{\sigma}$ is also a real extension of $\chi$ and it follows $(\tilde{\chi}^h)^{\sigma}=\tilde{\chi}$. 
\end{proof}

\subsection{Global characters}

In order to find suitable extensions of the global characters, we want to construct character extensions of some Deligne--Lusztig characters. We use the constructions from \cite[Section 6.3]{ruhstorfer2017navarro}. Let $F_0$ be a Steinberg endomorphisms of $\mathbf{G}$ and $k$ a positive integer. For any $F_0^k$-stable closed subgroup $\mathbf{K}$ of $\mathbf{G}$, we define
$$\underline{\mathbf{K}}=\mathbf{K} \times F_0^{k-1}(\mathbf{K}) \times \ldots \times F_0(\mathbf{K})\leq \mathbf{G}^k=\underline{\mathbf{G}}.$$
We set $$\tau: \underline{\mathbf{G}} \rightarrow \underline{\mathbf{G}}, \quad \tau(g_1,\ldots, g_k)=(g_2, \ldots , g_k,g_1).$$
As in \cite[Section 6.3]{ruhstorfer2017navarro}, $\tau$ is a quasi-central automorphism of $\underline{\mathbf{G}}$ and the projection onto the first coordinate $\pr$ induces isomorphisms 
$$\underline{\mathbf{K}}^{\tau F_0}\cong \mathbf{K}^{F_0^k} , \quad \underline{\mathbf{G}}^{\tau F_0} \rtimes \langle \tau \rangle \cong \mathbf{G}^{F_0^k} \rtimes \langle F_0 \rangle  .$$

We can now apply this construction to find suitable extensions of the global characters.
\begin{prop}\label{propextunip}
Let $\psi$ be a unipotent character of $\mathbf{G}^F$. Then there exists an $\mathcal{H}_\psi$-invariant extension of $\psi$ to $\mathbf{G}^F \rtimes \Gamma$.
\end{prop}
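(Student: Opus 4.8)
The plan is to reduce the statement about extending a unipotent character $\psi$ of $\mathbf{G}^F$ to $\mathbf{G}^F \rtimes \Gamma$ to the analysis already carried out in the proof of Proposition~\ref{propjordan}, combined with the ``$k$-fold'' construction recalled just above. First, since $\Gamma = \Aut(G)_R$ is cyclic of order dividing $2f+1$ (generated by the restriction of $F_l$), it suffices to extend $\psi$ to $\mathbf{G}^F \rtimes \langle F_l \rangle$ in an $\mathcal{H}_\psi$-invariant way; restriction then gives the extension to $\mathbf{G}^F \rtimes \Gamma$. The point is that unipotent characters of finite reductive groups are well-behaved under field automorphisms: $\psi$ is $F_l$-stable (field automorphisms fix unipotent characters, by \cite[e.g.\ results on stability of unipotent characters]{malle1990unip2f4} and the explicit tables), so an extension exists by the theory of character triples; the real content is making the choice of extension $\mathcal{H}_\psi$-equivariant.

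Next I would apply the construction with $F_0 = F_l \circ \gamma$ and $k = 2f+1$, so that $F_0^k = F_l^{2f+1} \circ \gamma^{2f+1}$ is (up to the order of $\gamma$, which is $2$) comparable to $F$, and the projection $\pr$ gives $\underline{\mathbf{G}}^{\tau F_0} \rtimes \langle \tau \rangle \cong \mathbf{G}^{F_0^k} \rtimes \langle F_0 \rangle$. Actually the cleaner route, following \cite[Section 6.3]{ruhstorfer2017navarro}, is to take $F_0 = F_l$ directly and realize $F = F_l^f \gamma$ as $\tau F_0$ on a suitable product group, transporting the question to extending a unipotent character along the quasi-central automorphism $\tau$. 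The key input is that $\tau$ is quasi-central, so that unipotent characters of $\underline{\mathbf{G}}^{\tau F_0}$ extend to $\underline{\mathbf{G}}^{\tau F_0}\rtimes\langle\tau\rangle$, and one can pin down a canonical extension (e.g.\ via Lusztig's parametrization of unipotent characters, which is compatible with $\tau$, or via the Digne--Michel ``$\tilde{R}_\mathbf{T}$'' construction on the product group).

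The final step is to check that the canonical extension so obtained is $\mathcal{H}_\psi$-invariant. Here I would argue exactly as in the proof of Proposition~\ref{propjordan}: for $\sigma \in \mathcal{H}$, the character $\hat\psi^\sigma$ is again an extension of $\psi^\sigma$, and since $\sigma \in \mathcal{H}_\psi$ we have $\psi^\sigma = \psi$, so $\hat\psi^\sigma$ is one of the finitely many extensions of $\psi$ to $\mathbf{G}^F \rtimes \Gamma$, which differ by linear characters of $\Gamma$. One then uses that the canonical extension is singled out by a property preserved by $\sigma$ — for instance, if the extension is constructed as a constituent of a $\sigma$-stable Deligne--Lusztig-type character of the ambient group, or if (as in the relevant cases) the order $|\Gamma| = 2f+1$ is odd so that Lemma~\ref{lemmaoddorderreal} applies whenever $\psi$ is real, and the non-real unipotent characters (the $\mathcal{W}, \overline{\mathcal{W}}$ pairs and the analogous pairs $\chi_{15},\chi_{16}$, $\chi_2,\chi_3$, etc.\ from the proof of Proposition~\ref{propjordan}) are handled by noting that $\Gamma$ of odd order commutes with the relevant Galois action on the pair. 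The main obstacle is precisely this last pinning-down: showing that the ``natural'' extension coming from the product-group construction is genuinely canonical and $\sigma$-equivariant, rather than merely existing. I expect this is dispatched by combining the oddness of $|\Gamma|$ (so Gallagher/uniqueness-of-real-extension arguments as in Lemma~\ref{lemmaoddorderreal} cover the real characters) with a direct check on the short list of complex unipotent characters using the explicit data already assembled in Proposition~\ref{propjordan}.
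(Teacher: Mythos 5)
Your outline follows the same strategy as the paper (transport to a product group where the generating outer automorphism becomes quasi-central, extend the Deligne--Lusztig character $R_{\mathbf{T}}^{\mathbf{G}}(1)$ to the disconnected group via Digne--Michel, and exploit that everything is built from the trivial character to get $\mathcal{H}$-invariance), but you leave the decisive step as an acknowledged gap, and your proposed fallback does not close it. The whole point is to single out \emph{one} of the $|D|$ extensions of $\psi$ in a way that $\sigma \in \mathcal{H}_\psi$ cannot disturb. The paper does this by (a) choosing $\mathbf{T}$ so that every constituent of $\eta = R_{\mathbf{T}}^{\mathbf{G}}(1_{\mathbf{T}^F})$ occurs with multiplicity $\pm 1$, and (b) computing $\langle \hat{\underline{\eta}}, \hat{\underline{\eta}} \rangle = \langle \underline{\eta}, \underline{\eta} \rangle$ via the scalar product formula for disconnected groups \cite[Proposition 4.8]{dignemichel1994grpsnonconn}; since every constituent of $\underline{\eta}$ extends, this count forces each one to have a \emph{unique} extension appearing in $\hat{\underline{\eta}}$, and uniqueness plus $\mathcal{H}$-stability of $\hat{\underline{\eta}}$ gives the invariance. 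Without this multiplicity-one choice and the inner-product computation, ``a constituent of a $\sigma$-stable Deligne--Lusztig character extending $\psi$'' need not be unique, and the argument collapses. Your alternative route via Lemma \ref{lemmaoddorderreal} only covers the real unipotent characters; for the non-real pairs ($\mathcal{W}, \overline{\mathcal{W}}$ in type ${}^2\mathsf{B}_2$ and the analogous pairs in ${}^2\mathsf{F}_4$ and ${}^2\mathsf{G}_2$) there is no canonical real extension to appeal to, and ``$\Gamma$ of odd order commutes with the Galois action on the pair'' does not by itself produce an $\mathcal{H}_\psi$-invariant choice among the $2f+1$ extensions. These non-real characters are exactly the ones for which the Deligne--Lusztig machinery is indispensable, so the ``direct check'' you defer is the actual content of the proof.

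A secondary issue: your choice of $F_0$ is off. One needs $F_0$ to be a representative of the generator of $\Out(\mathbf{G}^F) = \langle \gamma \rangle$ with $F_0^{2f+1}$ equal to $F$ up to an inner automorphism; the paper takes $F_0 = \gamma \circ c_g$ where $g$ is chosen so that $F_0$ stabilizes $\mathbf{T}$, and then applies Lang--Steinberg to pass from $\mathbf{G}^{F_0^{2f+1}}$ to $\mathbf{G}^F$. Neither $F_l \circ \gamma = \gamma^3$ nor $F_l = \gamma^2$ has the property that its $(2f+1)$-th power is $F$ up to inner, and the twist by $c_g$ (needed so that $\mathbf{T}$ survives into the product-group construction) is missing from your setup. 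Also note that $\Gamma = \Aut(G)_R$ is not cyclic in general --- it contains the inner automorphisms normalizing $R$ --- so one extends to the outer part first and then canonically to the inner automorphisms as in Remark \ref{remextension}(a).
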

\begin{proof}
We know that $\Gamma$ acts trivially on all unipotent characters of $\mathbf{G}^F$ by \cite[Theorem 2.5]{malle2008extuni}. If $\psi$ is the only character of its degree, it is real and we can extend it to $\langle \gamma \rangle$ as in Lemma \ref{lemmaoddorderreal} and then to the inner automorphisms in $\Gamma$. Otherwise, we can always find an $F$-stable Sylow torus $\mathbf{T}$ of $\mathbf{G}$ such that $\psi$ is a constituent of the Deligne--Lusztig character $\eta:=R_\mathbf{T}^\mathbf{G}(1_{\mathbf{T}^F})$ by the tables in \cite[p.373--376]{lusztig1984charofreductivegroups}. We can choose $\mathbf{T}$ such that all consitutents of $\eta$ have multiplicity $\pm 1$. 

Since all Sylow tori corresponding to the same cyclotomic polynomial are conjugate in $\mathbf{G}^F$, we find a $g \in \mathbf{G}^{F}$ such that $(\gamma \circ c_g)(\mathbf{T})=\mathbf{T}$ where $c_g$ denotes conjugation with $g$. We set $F_0:=\gamma \circ c_g$ and $k:=2f+1$. We know $\gamma^{k}=F$ and  $F_0^{k}=c_{g^*}  \circ F=F \circ c_{g^*}$ with $g^*:=\gamma(g) \cdots \gamma^{k-1}(g) \gamma^{k}(g) \in \mathbf{G}^{F}.$ By the Lang--Steinberg theorem, we find an $h \in \mathbf{G}$ such that $g^*=h^{-1}F_1(h)$. Conjugation with $h$ can be extended to an isomorphism $$\mathbf{G} \rtimes \langle F_0 \rangle \rightarrow \mathbf{G} \rtimes \langle c_h \circ F_0 \circ c_{h^{-1}} \rangle, \quad (x,F_0^i) \mapsto (hxh^{-1}, c_h \circ F_0^i \circ c_{h^{-1}} )$$ that maps $F_0^{k}$-stable elements to $F$-stable elements. Thus, it restricts to an isomorphism $\mathbf{G}^{F_0^{k}} \cong \mathbf{G}^{F}$ and $\mathbf{T}_1:=c_{h^{-1}}(\mathbf{T})$ is an $F_0^k$-stable torus of $\mathbf{G}$.

We can use the construction described above to obtain isomorphisms $\underline{\mathbf{G}}^{\tau F_0} \cong \mathbf{G}^{F_0^k}$, $\underline{\mathbf{T}_1}^{\tau F_0} \cong \mathbf{T}_1^{F_0^k}$ and  maps 
$${\pr}_{\mathbf{G}\rtimes \langle F_0 \rangle }^{\vee}: \mathbb{Z}\Irr(\mathbf{G}^{F_0^k} \rtimes \langle F_0 \rangle) \rightarrow \mathbb{Z}\Irr( \underline{\mathbf{G}}^{\tau F_0} \rtimes \langle \tau \rangle ), \quad {\pr}_{\mathbf{G}}^{\vee}: \mathbb{Z}\Irr(\mathbf{G}^{F_0^k}) \rightarrow \mathbb{Z}\Irr(\underline{\mathbf{G}}^{\tau F_0})$$ 
induced by $\pr$
that restrict to bijections between the respective irreducible characters. 
Let $\underline{\psi}:={\pr}_{\mathbf{G}}^{\vee}\left(\psi^{h^{-1}}\right)$ be the character of $\underline{\mathbf{G}}^{\tau F_0}$ corresponding to $\psi$. 
Setting 
$$\underline{\eta}:=R_{\underline{\mathbf{T}_1}}^{\underline{\mathbf{G}}}\left( 1_{\underline{\mathbf{T}_1}^{F_0^k}} \right)={\pr}_{\mathbf{G}}^{\vee}\left(\eta^{h^{-1}}\right),$$  
we have $\langle \underline{\eta}, \underline{\psi} \rangle =\langle \eta, \psi \rangle $. 
We use the construction of Deligne--Lusztig characters for disconnected groups as in \cite[Definition 2.2]{dignemichel1994grpsnonconn}. As already mentioned in  \cite[Proof of Prop. 3.6]{johansson2020}, most of the constructions and results generalize to our setting of Steinberg endomorphisms. 

Let $\mathbf{B}_1$ be a Borel subgroup of the disconnected group $\underline{\mathbf{G}} \rtimes \langle \tau \rangle$ containing $\underline{\mathbf{T}_1} \rtimes \langle \tau \rangle$.
By \cite[Proposition 1.38]{dignemichel1994grpsnonconn}, we find a $\tau$-stable pair $\widetilde{\mathbf{T}} \subseteq \widetilde{\mathbf{B}}$ that is conjugate to $\underline{\mathbf{T}_1} \rtimes \langle \tau \rangle \subseteq \mathbf{B}_1$ by some element $x \in \underline{\mathbf{G}}^{\tau F_0} \rtimes \langle \tau \rangle$.
Then we can define $$\hat{\underline{\eta}}:= R_{\widetilde{\mathbf{T}}}^{\underline{\mathbf{G}} \rtimes \langle \tau \rangle}\left(1_{\widetilde{\mathbf{T}}^{\tau F_0}}\right)=R_{\underline{\mathbf{T}_1}\rtimes \langle \tau \rangle}^{\underline{\mathbf{G}} \rtimes \langle \tau \rangle}\left(1_{\underline{\mathbf{T}_1}^{\tau F_0}\rtimes \langle \tau \rangle}\right) \in \mathbb{Z}\Irr(\underline{\mathbf{G}}^{\tau F_0} \rtimes \langle \tau \rangle).$$ This character extends $\underline{\eta}$ by \cite[Corollaire 2.4(i)]{dignemichel1994grpsnonconn}. 
We have $$((\underline{\mathbf{G}} \rtimes \langle \tau \rangle)^{\tau})^\circ= (\{(g, \ldots, g) \mid g \in \mathbf{G}\} \rtimes \langle \tau \rangle )^ \circ \cong \mathbf{G}$$ where the isomorphism is given by projection to the first coordinate $\pr$. Note that $\tau F_0$ acts on the group elements as $F_0$. Analogously, we have $\pr ((\widetilde{\mathbf{T}}^\tau)^\circ)= c_x(\mathbf{T}_1)$. It follows
$$  R_{(\widetilde{\mathbf{T}}^\tau)^\circ}^{((\underline{\mathbf{G}} \rtimes \langle \tau \rangle)^{\tau})^\circ}\left(1_{((\widetilde{\mathbf{T}}^\tau)^\circ )^{\tau F_0}}\right) \circ \pr =R_{c_x(\mathbf{T}_1)}^{\mathbf{G}}\left(1_{c_x(\mathbf{T}_1)^{F_0}}\right) .$$
By \cite[Proposition 4.8]{dignemichel1994grpsnonconn} we have thereby 
\begin{align*}
\langle \hat{\underline{\eta}},\hat{\underline{\eta}}\rangle_{(\underline{\mathbf{G}} \rtimes \langle \tau \rangle)^{\tau F_0}}&= \left\langle  R_{(\widetilde{\mathbf{T}}^\tau)^\circ}^{((\underline{\mathbf{G}} \rtimes \langle \tau \rangle)^{\tau})^\circ}\left(1_{((\widetilde{\mathbf{T}}^\tau)^\circ)^{\tau F_0}} \right),R_{(\widetilde{\mathbf{T}}^\tau)^\circ}^{((\underline{\mathbf{G}} \rtimes \langle \tau \rangle)^{\tau})^\circ}\left(1_{((\widetilde{\mathbf{T}}^\tau)^\circ)^{\tau F_0}} \right) \right\rangle_{((\underline{\mathbf{G}} \rtimes \langle \tau \rangle)^{\tau})^\circ)^ {\tau F_0 }}\\
&=\left\langle R_{c_x(\mathbf{T}_1)}^{\mathbf{G}}\left(1_{c_x(\mathbf{T}_1)^{F_0}}\right), R_{c_x(\mathbf{T}_1)}^{\mathbf{G}}\left(1_{c_x(\mathbf{T}_1)^{F_0}}\right) \right\rangle_{\mathbf{G}^{ F_0}}= \langle \underline{\eta},\underline{\eta}\rangle_{\underline{\mathbf{G}} ^{\tau F_0}}.
\end{align*}
The last equality holds since the multiplicities of the constituents of the Deligne--Lusztig characters of $\mathbf{G}^{F_0}$ and $\mathbf{G}^{F_0^k}$ coincide. 
Since all constituents of $\underline{\eta}$ have multiplicity $\pm 1$, the scalar product with itself gives us the number of irreducible constituents of $\underline{\eta}$. Now all constituents of $\underline{\eta}$ extend to $\underline{\mathbf{G}}^{\tau F_0} \rtimes \langle \tau \rangle$ and we have the same number of irreducible constituents in $\hat{\underline{\eta}}$. Thus, every constituent of $\underline{\eta}$ has a unique extension that is a constituent of $\hat{\underline{\eta}}$. For $\underline{\psi}$, we denote this character by $\hat{\underline{\psi}}$.

The Deligne--Lusztig character $\eta$ is $\mathcal{H}$-invariant because it arises from the trivial character. Analogously, $\underline{\eta}$ and $\underline{\hat{\eta}}$ are also $\mathcal{H}$-invariant and from the uniqueness we know that $\hat{\underline{\psi}}$ is $ \mathcal{H}_\psi$-invariant. It follows that $$\left(c_h \circ({\pr}_{\mathbf{G}\rtimes \langle \tau \rangle }^{\vee})^{-1}\right)(\hat{\underline{\psi}}) \in \Irr\left(\mathbf{G}^{F} \rtimes \langle c_h \circ F_0 \circ c_{h^{-1}}  \rangle\right)$$ is an $\mathcal{H}_\psi$-invariant extension of $\psi$. Extending this character canonically to all inner automorphisms yields an $ \mathcal{H}_\psi$-invariant extension of $\psi$ to $\mathbf{G}^F \rtimes   \Aut(\mathbf{G}^F)$ that we can restrict to $\mathbf{G}^F \rtimes \Gamma$.
\end{proof}
\begin{prop} \label{propregularext}
For every regular character $\psi$ of $\mathbf{G}^F$ there exists a $(\Gamma \times \mathcal{H})_\psi$-invariant extension $\hat{\psi} \in \Irr(\mathbf{G}^F \rtimes \Gamma_\psi)$.
\end{prop}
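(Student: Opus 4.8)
The plan is to reduce this to Lemma~\ref{lemmaoddorderreal} by first showing that \emph{every regular character of $\mathbf{G}^F$ is real-valued}. Recall that a regular character $\psi$ has Jordan decomposition $(s,\St_{C_{\mathbf{G}^F}(s)})$ for a semisimple $s\in\mathbf{G}^F$, and that $C_{\mathbf{G}^F}(s)=C_{\mathbf{G}^F}(s^{-1})$. I would argue that $s$ is conjugate to $s^{-1}$ in $\mathbf{G}^F$: the Weyl group of $\mathbf{G}$ (of type $\mathsf{B}_2$, $\mathsf{G}_2$ or $\mathsf{F}_4$) contains $w_0=-1$, which is central and hence commutes with the graph morphism $\gamma$, so conjugation by a representative of $w_0$ inverts every twisted maximal torus and descends to $\mathbf{G}^F$-conjugacy; alternatively this is visible in the generic character tables in \textsf{CHEVIE}. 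Applying Proposition~\ref{propjordan} with $\sigma$ the complex conjugation (so $b=-1$) and using that the Steinberg character is rational, $\overline{\psi}=\psi^{\sigma}$ has Jordan decomposition $(s^{-1},\St_{C_{\mathbf{G}^F}(s^{-1})})$; since $s^{-1}$ and $s$ are conjugate this is $\psi$ again, so $\psi$ is real.

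Given this, I would build the extension in two steps, separating the inner part of $\Gamma_\psi$ from the field-automorphism part. Since inner automorphisms act trivially on $\Irr(\mathbf{G}^F)$ and $Z(\mathbf{G}^F)=1$, we have $\Gamma_\psi=N_{\mathbf{G}^F}(R)\rtimes\langle\gamma\rangle_\psi$; consequently $\hat N:=\mathbf{G}^F\rtimes N_{\mathbf{G}^F}(R)$ is a normal subgroup of $\mathbf{G}^F\rtimes\Gamma_\psi$ with cyclic quotient $\langle\gamma\rangle_\psi$ of order dividing $2f+1$, which is odd. First extend $\psi$ to $\psi^{(1)}\in\Irr(\hat N)$ by the canonical extension to inner automorphisms of Remark~\ref{remextension}(a); since $\psi$ is real so is $\psi^{(1)}$, and $\psi^{(1)}$ is $(\Gamma\times\mathcal{H})_\psi$-invariant and $\langle\gamma\rangle_\psi$-invariant. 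Then, as $[\mathbf{G}^F\rtimes\Gamma_\psi:\hat N]=|\langle\gamma\rangle_\psi|$ is odd and $\psi^{(1)}$ is real, Lemma~\ref{lemmaoddorderreal} provides a unique real extension $\hat\psi\in\Irr(\mathbf{G}^F\rtimes\Gamma_\psi)$ of $\psi^{(1)}$, and it is $(\Gamma\times\mathcal{H})_\psi$-invariant. Since $\hat\psi$ restricts to $\psi$ on $\mathbf{G}^F$, it is the character we want. (One could equivalently pass first to $\mathbf{G}^F\rtimes\langle\gamma\rangle_\psi$ via Lemma~\ref{lemmaoddorderreal} and then add the inner part via Remark~\ref{remextension}(a).)

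The main obstacle is Step~1: proving that all regular characters are real. Once that is established the remainder is the standard ``real character of odd index'' mechanism behind Lemma~\ref{lemmaoddorderreal}, the only extra care being the bookkeeping that splits the (non-cyclic) group $\Gamma_\psi$ into the inner subgroup $N_{\mathbf{G}^F}(R)$ — on which Remark~\ref{remextension}(a) gives equivariance for free — and the cyclic odd-order subgroup $\langle\gamma\rangle_\psi$, which is exactly the setting of Lemma~\ref{lemmaoddorderreal}. I note that a Gelfand--Graev approach (realising $\psi$ as the unique constituent in its rational Lusztig series of $\Ind_{\mathbf{U}^F}^{\mathbf{G}^F}(\vartheta)$ for a non-degenerate linear character $\vartheta$ of the unipotent radical, then invoking Remark~\ref{remextension}(c)) would also work for the Suzuki groups and ${}^2\mathsf{F}_4(q^2)$, where $l=2$ makes $\vartheta$ rational; but for ${}^2\mathsf{G}_2(q^2)$ the $\mathcal{H}$-action on $\vartheta$ (an order-$3$ additive character) is awkward, so routing everything through the reality of $\psi$ is the cleaner and uniform path.
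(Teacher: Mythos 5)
Your argument is correct in substance, but it takes a genuinely different route from the paper. The paper does \emph{not} go through reality of $\psi$: it uses exactly the Gelfand--Graev approach you set aside at the end. There, $\xi$ is extended trivially to $\mathbf{U}^F\rtimes D$, $\hat\Theta_1:=\Ind_{\mathbf{U}^F\rtimes D}^{\mathbf{G}^F\rtimes D}(\hat\xi)$ is multiplicity free, and $\hat\psi$ is the unique constituent above $\psi$; the difficulty you flag for ${}^2\mathsf{G}_2$ (that $\xi$ is an order-$3$ character, not $\mathcal{H}$-stable) is resolved by \cite[Lemma 3.5]{johansson2020}, which gives $\xi^\sigma=\xi^t$ for some $t\in\mathbf{T}^\gamma$, so that $\gamma^e\sigma^{-1}t$ fixes $\hat\Theta_1$ and multiplicity-freeness forces $t$ to act trivially on $\hat\psi$. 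So your stated reason for avoiding that route is precisely the obstacle the paper removes. Your alternative — all regular characters of $\mathbf{G}^F$ are real, hence Lemma \ref{lemmaoddorderreal} applies after the canonical inner extension — does work: since $w_0=-1$ lies in $W$ for types $\mathsf{B}_2$, $\mathsf{G}_2$, $\mathsf{F}_4$, is central and $\gamma$-stable, it lies in every relative Weyl group $C_{W,F}(w)$, and an application of Lang--Steinberg to the $F$-stable coset $n_0\mathbf{T}_w$ produces a representative in $N_{\mathbf{G}}(\mathbf{T}_w)^F$ inverting $\mathbf{T}_w^F$; this step (which you only gesture at with ``descends to $\mathbf{G}^F$-conjugacy'') should be written out, since it is the whole content of the reality claim. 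Combined with Proposition \ref{propjordan} for $b=-1$ and the stability of $\St_{C(s)}$ under conjugation, one indeed gets $\overline{\psi}=\psi$, and your two-step extension (canonical extension to $\mathbf{G}^F\rtimes\Inn(\mathbf{G}^F)_R$, then the odd-index real-extension argument for the cyclic quotient $\langle\gamma\rangle_\psi$) is sound — note only that Lemma \ref{lemmaoddorderreal} literally gives $(H\times\mathcal{H})$-invariance, so you should remark that its proof applies verbatim to the possibly larger group $(\Gamma\times\mathcal{H})_\psi$, which permutes real extensions of $\psi^{(1)}$ and hence fixes the unique one. What each approach buys: yours shows Proposition \ref{propregularext} is really subsumed by the ``real character'' case and avoids disconnected Deligne--Lusztig theory, but it is tied to $-1\in W$; the paper's Gelfand--Graev argument makes no reality assumption and is the one that transfers to other types.
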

\begin{proof}
We write $D:=\langle \gamma \rangle_\psi$. Let $\mathbf{U}$ be the unipotent radical of an $F$-stable Borel subgroup of $\mathbf{G}$ and $\xi$ be a linear and $\langle \gamma \rangle$-invariant character of $\mathbf{U}^F$ inducing the Gelfand--Graev character $\Theta_1$ as in \cite[Section 5.2]{ruhstorfer2017navarro}. 
We extend it trivially to a character $\hat{\xi} \in \Irr( \mathbf{U}^F \rtimes D)$. As in Remark \ref{remextension}(\ref{reminduction}), $\hat{\Theta}_1:=\Ind_{ \mathbf{U}^F \rtimes D}^ { \mathbf{G}^F \rtimes D}(\hat{\xi})$ is an extension of $\Theta_1$. 
Since $\psi$ is regular, it is a constituent of $\Theta_1$ with multiplicity $1$ \cite[Corollary 12.4.10]{dignemichel2020representations}. Thus, there exists a unique constituent of $\hat{\Theta}_1$ extending $\psi$ to $\mathbf{G}^F \rtimes D$. We denote this character by $\hat{\psi}$. 

Let $e \in \mathbb{N}$ with $(\gamma^e, \sigma) \in (\Gamma \times \mathcal{H})_\psi$. We know by \cite[Lemma 3.5]{johansson2020} that there exists a $t \in \mathbf{T}^\gamma$ such that $\xi^\sigma=\xi^t$. Thus, $\gamma^e \sigma^{-1} t$ fixes $\xi$, $\hat{\xi}$, $\Theta_1$, and $\hat{\Theta}_1$. Since ${\Theta}_1$ is multiplicity free \cite[Theorem 12.3.4]{dignemichel2020representations}, $\hat{\Theta}_1$ is also multiplicity free and $t \in \mathbf{T}^\gamma \subset \mathbf{G}^F$ acts trivially on $\hat{\psi}$. It follows that $\hat{\psi}$ is also $\gamma^e \sigma^{-1}$-invariant. 
Extending it to the inner automorphisms in $\Gamma_\psi$ completes the proof.
\end{proof}
\subsection{Local characters}
To prove an analogous result for the local characters, we look at the characters of $N=N_{\mathbf{G}^F}(\mathbf{S})$ for Sylow $\Phi^{(p)}$-tori $\mathbf{S}$ as in Section \ref{sectionbijection}. We continue to use the notation introduced there. As in the proof of Proposition \ref{propequivbijection}, we know that the irreducible $p'$-characters of $N$ can be parametrized by $(s, \eta) \in \mathcal{M}$ via 
$\psi^{(N)}(s, \eta)= \Ind_{N_{\chi_s^{{\mathbf{T}}}}}^N (\Lambda(\chi_s^{{\mathbf{T}}})(\eta \circ i_{s,1})) .$ 
We use this to consider the irreducible characters of $N$ case by case.
\begin{prop}\label{propextlocal}
Let $\psi$ be an irreducible $p'$-character of $N$. Then there exists a $(\Gamma \times \mathcal{H})_\psi$-invariant extension $\hat{\psi} \in \Irr(N \rtimes \Gamma_\psi)$.
\end{prop}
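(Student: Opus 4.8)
The plan is to parametrize the irreducible $p'$-characters of $N = N_{\mathbf{G}^F}(\mathbf{T})$ by the set $\mathcal{M}$ as recalled just before the statement, writing $\psi = \psi^{(N)}(s,\eta) = \Ind_{N_{\chi_s^{\mathbf{T}}}}^N(\Lambda(\chi_s^{\mathbf{T}})(\eta \circ i_{s,1}))$, and to build the extension in two stages: first extend the character $\Lambda(\chi_s^{\mathbf{T}})(\eta \circ i_{s,1})$ of the inertia group $N_{\chi_s^{\mathbf{T}}}$ (which sits between $\mathbf{T}^F$ and $N$), then transport this up to $N \rtimes \Gamma_\psi$ via the induction mechanism of Remark \ref{remextension}(\ref{reminduction}). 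Since $\Gamma = \langle \gamma \rangle$ is cyclic of odd order $2f+1$, the key simplification is that $\Gamma_\psi$ is also cyclic of odd order, so extensions exist whenever the relevant character is $\Gamma_\psi$-invariant, and we only have to worry about arranging simultaneous $\mathcal{H}$-invariance.

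First I would reduce to extending a character of $\mathbf{T}^F \rtimes C$ where $C$ is the (cyclic) relative Weyl group part. The character $\chi_s^{\mathbf{T}}$ is linear, so by Remark \ref{remextension}(\ref{remlinear}) it extends trivially to its inertia group in $N_{\mathbf{G}^F}(\mathbf{T})$, and this trivial extension is $(\Gamma \times \mathcal{H})_{\chi_s^{\mathbf{T}}}$-invariant; this is exactly the extension map $\Lambda$ from Lemma \ref{lemmaextensionmap}, which we already know to be $\Gamma \times \mathcal{H}$-equivariant. The remaining factor $\eta \circ i_{s,1}$ is a character of the relative Weyl group $\W_{\mathbf{H}}(\mathbf{T},\lambda)^F$, inflated to the inertia group; from the proof of Proposition \ref{proprelweylbijection} these relative Weyl groups are, case by case, either cyclic, dihedral of order $16$, $\GL_2(3)$, $\Sym(3)$, $C_4.\Sym(4)$, or the Shephard--Todd group number $8$, and in every case Lemma \ref{lemmamodulo} forces the relevant root of unity to lie in the fixed field, so $\mathcal{H}$ acts trivially on $\Irr_{p'}$ of these groups (or, in the $C_4$ cases for type ${}^2\mathsf{B}_2$, swaps $\mathcal{W} \leftrightarrow \overline{\mathcal{W}}$ exactly as it swaps the corresponding semisimple-element labels, so the combined $\Gamma \times \mathcal{H}$-action is still realizable). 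So the character $\Lambda(\chi_s^{\mathbf{T}})(\eta \circ i_{s,1})$ of $N_{\chi_s^{\mathbf{T}}}$ is $(\Gamma \times \mathcal{H})$-stable up to the replacements $(s,\eta) \mapsto (\gamma(s^k),(\eta^\gamma)^\sigma)$, which on the level of the \emph{single} character $\psi$ we have fixed means its stabilizer acts trivially; since $\Gamma_\psi$ is cyclic, it therefore extends to $N_{\chi_s^{\mathbf{T}}} \rtimes \Gamma_\psi$, and I would use Lemma \ref{lemmaoddorderreal} or the uniqueness trick in the proof of Proposition \ref{propregularext} to pin down an extension that is moreover $(\Gamma \times \mathcal{H})_\psi$-invariant (odd index makes the real/rational extension unique, hence automatically Galois-fixed).

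Second I would push this extension up to $N \rtimes \Gamma_\psi$ using Remark \ref{remextension}(\ref{reminduction}) with $M = N$, $X = N_{\chi_s^{\mathbf{T}}}$, $\tau = \Lambda(\chi_s^{\mathbf{T}})(\eta \circ i_{s,1})$, $A = \Gamma$: Clifford theory gives $\langle \psi, \Ind_X^N(\tau)\rangle = 1$ because $\psi^{(N)}$ is built exactly as such an induction, and the remark then produces a unique constituent of $\Ind_{X \rtimes \Gamma_\psi}^{N \rtimes \Gamma_\psi}(\hat{\tau})$ extending $\psi$, which is $(\Gamma \times \mathcal{H})_\psi$-invariant by the uniqueness. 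For the small groups with nontrivial Schur multiplier (${}^2\mathsf{B}_2(2)$, ${}^2\mathsf{B}_2(8)$, ${}^2\mathsf{G}_2(3)$, ${}^2\mathsf{F}_4(2)$) the claim is vacuous here as they are handled separately. The main obstacle I expect is the bookkeeping in the ${}^2\mathsf{B}_2$-type centralizer case, where the relative Weyl group is $C_4$ and $\mathcal{H}$ genuinely acts nontrivially by swapping $\mathcal{W} \leftrightarrow \overline{\mathcal{W}}$: I must check that the extension to $\Gamma_\psi$ can be chosen so that the $\mathcal{H}$-action is still compatible, i.e. that the character $\Lambda(\chi_s^{\mathbf{T}})(\eta \circ i_{s,1})$ of the inertia group, though not $\mathcal{H}$-fixed, has its $\mathcal{H}$-orbit matched by the $\Gamma$-orbit translation $s \mapsto s^k$ on the semisimple label, so that on the single fixed $\psi$ the combined stabilizer does act trivially and the uniqueness argument applies — this is the one place where a short explicit verification, rather than a general principle, is needed.
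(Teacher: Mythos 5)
Your overall strategy coincides with the paper's: write $\psi=\psi^{(N)}(s,\eta)=\Ind_{N_{\chi_s^{\mathbf{T}}}}^{N}\bigl(\Lambda(\chi_s^{\mathbf{T}})(\eta\circ i_{s,1})\bigr)$, extend the Clifford-theoretic character on the inertia group using linearity (Remark \ref{remextension}(\ref{remlinear})) or reality plus odd index (Lemma \ref{lemmaoddorderreal}), and push the extension up with Remark \ref{remextension}(\ref{reminduction}). However, there is one genuine gap. In type ${}^2\mathsf{F}_4$ the character $\eta$ need not be linear: for $s$ of type $t_4$ (and $t_7$, $t_9$) the group $H$ is a non-abelian subgroup of $\GL_2(3)$ or of the Shephard--Todd group, and $\eta$ can have degree $2$. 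In that case the product $\Lambda(\chi_s^{\mathbf{T}})(\eta\circ i_{s,1})$ is neither linear (so Remark \ref{remextension}(\ref{remlinear}) fails) nor real (since $\chi_s^{\mathbf{T}}$ is a generically non-real linear character of the torus, so Lemma \ref{lemmaoddorderreal} does not apply to the product). Your proposed toolbox therefore does not cover this case. The paper resolves it by splitting the two factors: $\eta\circ i_{s,1}$ is integer-valued, hence real, and extends to $H\rtimes\langle\gamma'n\rangle$ by Lemma \ref{lemmaoddorderreal}; $\Lambda(\chi_s^{\mathbf{T}})$ is linear; and the two extensions are recombined on $\mathbf{T}^F\rtimes(H\rtimes\langle\gamma'n\rangle)$ via the standard construction of irreducible characters of semidirect products, before inducing up. You need some version of this two-factor argument.

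Two further imprecisions, both repairable. First, $\Gamma=\Aut(G)_R$ is not $\langle\gamma\rangle$ and is not cyclic: it contains the inner automorphisms normalizing $R$. One must, as the paper does, work with a cyclic odd-order subgroup $D\leq\Gamma_\psi$ of outer representatives and then extend canonically to the inner automorphisms via Remark \ref{remextension}(a). Relatedly, an element of $\Gamma_\psi$ stabilizes $\psi$ but need not stabilize $\chi_s^{\mathbf{T}}$ or its inertia group $X=N_{\chi_s^{\mathbf{T}}}$; to make $X$ invariant (as Remark \ref{remextension}(\ref{reminduction}) requires) one must twist the generator by a suitable $n\in N$ with $\psi_0^{\gamma'n}=\psi_0$ and then observe that inner automorphisms act trivially on $\psi$. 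Second, your worry about the $C_4$ cases in type ${}^2\mathsf{B}_2$ is misplaced for this proposition: the swap $\mathcal{W}\leftrightarrow\overline{\mathcal{W}}$ is relevant to the bijection of Proposition \ref{proprelweylbijection}, whereas locally the corresponding characters of $N=\mathbf{T}^F\rtimes C_4$ with $s=1$ are linear, so their extensions are handled trivially by Remark \ref{remextension}(\ref{remlinear}); for $s\neq1$ the characters are fully induced from the torus and no choice of extension of $\eta$ arises.
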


\begin{proof}
Let $D \leq \Gamma_\psi$ be a cyclic subgroup of $\Gamma$ containing representatives for all outer automorphisms in $\Gamma_\psi$ and let $\gamma'$ be a generator of $D$. We can assume that $D$ has odd order since we can otherwise consider $\langle \gamma'^{|D|_{2}} \rangle$.
An overview of the local characters is given in Table \ref{table2b2local}, \ref{table2g2local}, and \ref{table2f4local}.

We first consider $(s,\eta) \in \mathcal{M}$. If $s=1$, the corresponding character $\psi:=\psi^{(N)}(1, \eta)$ is the character inflated from $\eta \circ i_{1,1}$ where $\eta$ is a character of the corresponding relative Weyl group $\mathbf{W}_{\mathbf{G}^F}(\mathbf{T})$.
The relative Weyl group is, depending on $p$ and the type of $\mathbf{G}$, either cyclic or isomorphic to $D_{16}$, $\GL_2(3)$, or the complex reflection group with Sheppard-Todd number $8$. We see that every character of these groups is linear, rational, or a constituent of $\Ind_H^{\mathbf{W}_{\mathbf{G}^F}(\mathbf{T})}(\tau)$ of multiplicity $1$ for some subgroup $H \leq \mathbf{W}_{\mathbf{G}^F}(\mathbf{T})$ and a $D$-invariant linear character $\tau \in \Irr(H)$. Thus, we find a $(\Gamma \times \mathcal{H})_\psi$-invariant extension of $\eta$ to $\mathbf{W}_{\mathbf{G}^F}(\mathbf{T}) \rtimes D$ by Remark \ref{remextension}(\ref{reminduction}) and Lemma \ref{lemmaoddorderreal}. Inflation yields a $(\Gamma \times \mathcal{H})_\psi$-invariant extension of $\psi$ to $N \rtimes D$ and we can extend this character to the remaining inner automorphisms in $\Gamma$.

\begin{table}[htbp]
\renewcommand{\arraystretch}{1.5}
\begin{tabular}{|l|l|l|}
\multicolumn{3}{l}{
$p \mid q^2-1$, $\mathbf{T}^F \cong C_{q^2-1} $, $N=\mathbf{T}^F \rtimes C_2$} \\
\hline
$s$	&$|s^N|$ &  Associated characters $\psi^{(N)}(s, \cdot)$ \\ \hline
$1$	& $1$	& $\Inf_{C_2}^N(\eta)$ for $\eta \in \Irr(C_2)$ \\ \hline
$\neq 1$ &$2$	& $\Ind_{\mathbf{T}^F}^N(\chi_s^{\mathbf{T}} )$  \\ \hline
\end{tabular}
\begin{tabular}{|l|l|l|}
\multicolumn{3}{l}{$p \mid q^2\pm \sqrt{2}q+1$, $\mathbf{T}^F \cong C_{q^2\pm \sqrt{2}q+1} $, $N=\mathbf{T}^F \rtimes C_4$} \\
\hline
$s$	&$|s^N|$ &  Associated characters $\psi^{(N)}(s, \cdot)$ \\ \hline
$1$	& $1$	& $\Inf_{C_4}^N(\eta)$ for $\eta \in \Irr(C_4)$ \\ \hline
$\neq 1$ &$4$	& $\Ind_{\mathbf{T}^F}^N(\chi_s^{\mathbf{T}} )$  \\ \hline 
\end{tabular}
\caption{Description of Sylow $\Phi^{(p)}$-tori for type $^2\mathsf{B}_2$, their normalizers $N$, the size of their $N$-conjugacy classes and the associated irreducible characters \cite[Section 16]{isaacsmallenavarro2007}.}
\label{table2b2local}
\end{table}
\begin{table}[htbp]
\renewcommand{\arraystretch}{1.5}
\begin{minipage}[t]{0.5\textwidth}
\begin{tabular}[t]{|l|l|l|}
\multicolumn{3}{l}{
$p \mid q^2-1$, $\mathbf{T}^F \cong C_{q^2-1}$, $N=\mathbf{T}^F \rtimes C_2$} \\ \hline
$s$	&$|s^N|$ &  Associated characters $\psi^{(N)}(s, \cdot)$ \\ \hline
$1$	& $1$	& $\Inf_{C_2}^N(\eta)$ for $\eta \in \Irr(C_2)$ \\ \hline
$\neq 1$ &$2$	& $\Ind_{\mathbf{T}^F}^N(\chi_s^{\mathbf{T}} )$  \\ \hline
\multicolumn{3}{l}{$p \mid q^2\pm \sqrt{3}q+1$, $\mathbf{T}^F \cong C_{q^2\pm \sqrt{3}q+1} $, $N=\mathbf{T}^F \rtimes C_6$} \\
\hline
$1$	& $1$	& $\Inf_{C_6}^N(\eta)$ for $\eta \in \Irr(C_6)$ \\ \hline
$\neq 1$ &$6$	& $\Ind_{\mathbf{T}^F}^N(\chi_s^{\mathbf{T}} )$  \\ \hline 
\end{tabular}
\end{minipage}
\begin{minipage}[t]{0.46\textwidth}
\begin{tabular}[t]{|p{0.22\textwidth}|p{0.08\textwidth}|p{0.55\textwidth}|}
\multicolumn{3}{p{\textwidth}}{
$p \mid q^2+1,$ $\mathbf{T}^F \cong C_{(q^2+1)/2} \times C_2,$  $N=\mathbf{T}^F \rtimes C_6$
} \\
\hline
$s$	&$|s^N|$ &  Associated characters $\psi^{(N)}(s, \cdot)$ \\ \hline
$1$	& $1$	& $\Inf_{C_6}^N(\eta)$ for $\eta \in \Irr(C_6)$ \\ \hline
$\ord(s)=2$ &$3$	& $\Ind_{\mathbf{T}^F \rtimes C_2}^N(\Lambda(\chi_s^{\mathbf{T}})(\eta \circ i_{s,1}) )$, $\eta \in \Irr(C_2)$ \\ \hline 
$\ord(s)>2$ &$6$& $\Ind_{\mathbf{T}^F}^N(\chi_s^{\mathbf{T}} )$  \\ \hline 
\end{tabular}
\end{minipage}
\caption{Description of Sylow $\Phi^{(p)}$-tori for type $^2\mathsf{G}_2$, their normalizers $N$, the size of their $N$-conjugacy classes and the associated irreducible characters \cite[Section 17]{isaacsmallenavarro2007}.}
\label{table2g2local}
\end{table}
\begin{table}[htbp]
\renewcommand{\arraystretch}{1.5}
\begin{tabular}{|p{0.16\textwidth}|p{0.09\textwidth}|p{0.04\textwidth}|p{0.6\textwidth}|}
\multicolumn{4}{p{\textwidth}}{
$p \mid q^2-1$, $\mathbf{T}^F \cong C_{q^2-1} \times C_{q^2-1} \cong \langle \alpha \rangle \times  \langle \alpha '\rangle$, $N=\mathbf{T}^F \rtimes D_{16}$} \\
\hline
$s$& Type in \cite{shinoda1975conj2f4}	& $|s^N|$ &  Associated characters $\psi^{(N)}(s, \cdot)$ \\ \hline
$(1,1)$			&$t_0$	& $1$	& $\Inf_{D_{16}}^N(\eta)$ for $\eta \in \Irr(D_{16})$ \\ \hline
$(1,\alpha'^i)$ &$t_1$	& $8$	& $\Ind_{\mathbf{T}^F \rtimes C_2}^N(\Lambda(\chi_s^{\mathbf{T}}) (\eta \circ i_{s,1}))$ for $\eta \in \Irr(C_{2})$  \\ \hline
$(\alpha^i,\alpha'^{i(\sqrt{2}q+1)})$ &$t_2$	& $8$	& $\Ind_{\mathbf{T}^F \rtimes C_2}^N(\Lambda(\chi_s^{\mathbf{T}}) (\eta \circ i_{s,1}))$ for $\eta \in \Irr(C_{2})$  \\ \hline
$(\alpha^i,\alpha'^j)$ &$t_3$	& $16$	& $\Ind_{\mathbf{T}^F }^N(\chi_s^{\mathbf{T}})$  \\ \hline 
\multicolumn{4}{p{\textwidth}}{$p \mid q^2+1$, $\mathbf{T}^F \cong C_{q^2+1} \times C_{q^2+1} \cong \langle \alpha \rangle \times  \langle \alpha '\rangle$, $N=\mathbf{T}^F \rtimes \GL_2(3)$} \\
\hline
$(1,1)$			&$t_0$	& $1$	& $\Inf_{\GL_2(3)}^N(\eta)$ for $\eta \in \Irr(\GL_2(3))$ \\ \hline
$(\alpha^{i\frac{q^2+1}{3}},$ $\alpha'^{j\frac{q^2+1}{3}})$ &$t_4$	& $8$	& $\Ind_{\mathbf{T}^F \rtimes H}^N(\Lambda(\chi_s^{\mathbf{T}}) (\eta \circ i_{s,1}))$ for $H \leq \GL_2(3)$ of size $6$, $\eta \in \Irr(H)$ \\ \hline
$(\alpha^i,\alpha'^i)$ &$t_5$	& $24$	& $\Ind_{\mathbf{T}^F \rtimes C_2}^N(\Lambda(\chi_s^{\mathbf{T}}) (\eta \circ i_{s,1}))$ for $\eta \in \Irr(C_{2})$ \\ \hline
$(\alpha^i,\alpha'^j)$ &$t_{14}$	& $48$	& $\Ind_{\mathbf{T}^F }^N(\chi_s^{\mathbf{T}})$ \\ \hline
\multicolumn{4}{p{\textwidth}}{
$p \mid q^2\pm \sqrt{2}q+1$, $\mathbf{T}^F \cong C_{q^2\pm \sqrt{2}q+1} \times C_{q^2\pm \sqrt{2}q+1} \cong \langle \alpha \rangle \times  \langle \alpha' \rangle$, $N=\mathbf{T}^F \rtimes U$ with $U$ the complex reflection group with Shepard--Todd number $8$} \\
\hline
$(1,1)$			&$t_0$	& $1$	& $\Inf_{U}^N(\eta)$ for $\eta \in \Irr(U)$ \\ \hline
$(1,\alpha'^i)$ &$t_7$ or $t_9$	& $24$	& $\Ind_{\mathbf{T}^F \rtimes H}^N(\Lambda(\chi_s^{\mathbf{T}}) (\eta \circ i_{s,1}))$ for $H\leq U$ of size $4$, $\eta \in \Irr(H)$  \\ \hline
$(\alpha^i,\alpha'^j)$ &$t_{12}$ or $t_{13}$	& $96$	& $\Ind_{\mathbf{T}^F }^N(\chi_s^{\mathbf{T}})$  \\ \hline
\multicolumn{4}{p{\textwidth}}{
$p \mid q^4 -q^2+1$, $\mathbf{T}^F \cong C_{q^4 -q^2+1} \cong \langle \alpha \rangle $, $N=\mathbf{T}^F \rtimes C_6$ } \\
\hline
$1$			&$t_0$	& $1$	& $\Inf_{C_6}^N(\eta)$ for $\eta \in \Irr(C_6)$ \\ \hline
$\alpha^{\frac{q^4 -q^2+1}{3}}$ &$t_4$& $2$	& $\Ind_{\mathbf{T}^F \rtimes C_3}^N(\Lambda(\chi_s^{\mathbf{T}}) (\eta \circ i_{s,1}))$ for $\eta \in \Irr(C_3)$  \\ \hline
$\alpha^i$ &$t_{15}$& $6$	& $\Ind_{\mathbf{T}^F }^N(\chi_s^{\mathbf{T}})$ \\ \hline 
\multicolumn{4}{p{\textwidth}}{$p \mid q^4 \pm \sqrt{2}q^3 + q^2\pm \sqrt{2}q+1$, $\mathbf{T}^F \cong C_{q^4 \pm \sqrt{2}q^3 + q^2\pm \sqrt{2}q+1} \cong \langle \alpha \rangle $, $N=\mathbf{T}^F \rtimes C_{12}$ } \\ \hline
$1$			&$t_0$	& $1$	& $\Inf_{C_{12}}^N(\eta)$ for $\eta \in \Irr(C_{12})$ \\ \hline
$\alpha^i$ &$t_{16}$ or $t_{17}$& $12$	& $\Ind_{\mathbf{T}^F }^N(\chi_s^{\mathbf{T}})$   \\ \hline
\end{tabular}
\caption{Description of Sylow $\Phi^{(p)}$-tori for type $^2\mathsf{F}_4$, their normalizers $N$ given in \cite{malle1991maximalsubgroups}, representatives of its semisimple $N$-conjugacy classes from \cite{shinoda1975conj2f4}, and the associated irreducible characters. Here, $\alpha$ and $\alpha'$ denote generators of the respective cyclic groups.}
\label{table2f4local}
\end{table}

We assume that $s$ is not trivial and consider $\psi:=\psi^{(N)}(s, \eta)=\Ind_{N_{\chi_s^{{\mathbf{T}}}}}^N (\psi_0)$ for $\psi_0:=\Lambda(\chi_s^{{\mathbf{T}}})(\eta \circ i_{s,1})$. 
Since $\mathbf{T}^F$ is abelian and $\Lambda$ preserves the character degrees, $\Lambda(\chi_s^{{\mathbf{T}}})$ is always linear. First assume that $\eta$ is linear. 
Because $\gamma'$ fixes $\psi$ there exists an $n \in N$ such that $\psi_0^{\gamma'n}=\psi_0$. Thus, we find a $(\langle \gamma' n\rangle \times \mathcal{H})_\psi$-invariant extension $\hat{\psi}$ of $\psi$ to $N \rtimes \langle \gamma' n \rangle$ by Remark \ref{remextension}(\ref{remlinear}) and (\ref{reminduction}). Since $\Inn(N)$ acts trivially on $\psi$, the extension $\hat{\psi}$ is also $(D \times \mathcal{H})_\psi$-invariant.  The claim follows by extending $\hat{\psi}$ to the remaining inner automorphisms of $N$.

For type $^ 2\mathsf{F}_4$ it can happen that $\eta \in \Irr(H)$ is not linear but integer-valued. Let $n \in N$ such that $(\chi_s^T)^{\gamma'n}=\chi_s^T$ and $\gamma' n$ has odd order. We have a natural isomorphism $(\mathbf{T}^F \rtimes H)\rtimes \langle \gamma' n \rangle \cong \mathbf{T}^F \rtimes (H\rtimes \langle \gamma' n \rangle)$.  By \cite[Proposition 2.3]{cabanesspaeth2013}, $\eta \circ i_{s,1}$ is $\gamma' n$ -stable and with Lemma \ref{lemmaoddorderreal} we can find a $(\Gamma \times \mathcal{H})_\psi$-invariant extension of $\eta \circ i_{s,1}$ to $H \rtimes  \langle \gamma' n \rangle$. Inflating this character $\hat{\eta}$ and using the construction of irreducible characters of semidirect products from \cite[Section 8.2]{serre1977book}, we obtain an $(\Gamma \times \mathcal{H})_\psi$-invariant extension of $\Lambda(\chi_s^{\mathbf{T}}) (\eta \circ i_{s,1})$ to $\mathbf{T}^F \rtimes (H\rtimes \langle \gamma' n \rangle)$. With Remark \ref{remextension}(\ref{reminduction}) we get a $(\Gamma \times \mathcal{H})_\psi$-invariant extension of $\psi$ to $N \rtimes \Gamma_\psi$. 
\end{proof}
\subsection{Verification of the inductive condition}
\begin{theorem}
The group $\mathbf{G}^F$ as in Section 2.3 satisfies Condition \ref{imck} for $p$. 
\end{theorem}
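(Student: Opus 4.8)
The plan is to read off the two items of Condition~\ref{imck} from the results of Sections~3 and~4. I keep the notation of Section~\ref{sectionbijection}: let $Q\in\Syl_p(\mathbf G^F)$, let $\mathbf S$ be a Sylow $\Phi^{(p)}$-torus with $N_{\mathbf G^F}(Q)\le N_{\mathbf G^F}(\mathbf S)$, put $\mathbf T=C_{\mathbf G}(\mathbf S)$ and $N:=N_{\mathbf G^F}(\mathbf S)=N_{\mathbf G^F}(\mathbf T)$. Recall that $\mathbf G^F$ is simple with cyclic outer automorphism group and trivial Schur multiplier (Section~2.3), so it suffices to verify items~(1) and~(2) of Condition~\ref{imck} (the finitely many excluded pairs $(\mathbf G^F,p)$ are dealt with separately in Proposition~\ref{propspecialg2f4}). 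First I would check that $N$ is admissible for~(1): it contains $N_{\mathbf G^F}(Q)$ by construction; it is properly contained in $\mathbf G^F$ because $\mathbf T$ is a proper maximal torus and $\mathbf G$ is not a torus; and, after replacing $\mathbf S$ by a suitable $\mathbf G^F$-conjugate, $N$ is $\Gamma$-stable, as is implicit in the $\Gamma$-equivariant parametrizations of \cite[Theorem~6.1, Theorem~6.2]{cabanesspaeth2013}. Item~(1) is then supplied by $\Omega^{-1}$, where $\Omega\colon\Irr_{p'}(N)\to\Irr_{p'}(\mathbf G^F)$ is the $\Gamma\times\mathcal H$-equivariant bijection of Proposition~\ref{propequivbijection}.

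For item~(2) fix $\psi\in\Irr_{p'}(\mathbf G^F)$. Since $\Omega$ is $\Gamma\times\mathcal H$-equivariant, $(\Gamma\times\mathcal H)_\psi=(\Gamma\times\mathcal H)_{\Omega^{-1}(\psi)}$ and $\Gamma_\psi=\Gamma_{\Omega^{-1}(\psi)}$, so the extension of the local character $\Omega^{-1}(\psi)$ and the extension of the global character $\psi$ can be produced independently. The extension of $\Omega^{-1}(\psi)$ to $N\rtimes\Gamma_\psi$ is exactly Proposition~\ref{propextlocal}. For $\psi$ I would use the partition $\Irr(\mathbf G^F)=\bigcup_s\mathcal E(\mathbf G^F,s)$: if $\psi$ is unipotent (the case $s=1$), Proposition~\ref{propextunip} provides an $\mathcal H_\psi$-invariant extension to $\mathbf G^F\rtimes\Gamma$; if $\psi$ is the regular character of $\mathcal E(\mathbf G^F,s)$, Proposition~\ref{propregularext} applies.

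It remains to treat the $p'$-characters $\psi=\chi^{\mathbf G}_{s,\nu}$ with $s\ne 1$ and $\nu\ne\St_{\mathbf H}$ a unipotent $p'$-character of $\mathbf H:=C_{\mathbf G}(s)$. By the analysis in the proof of Proposition~\ref{proprelweylbijection} together with \cite{deriziotisliebeck1985centralizerstwisted} and \cite{shinoda1975conj2f4}, $\mathbf H$ is the direct product of a torus with one of the small groups of type $\mathsf A_1$, ${}^2\mathsf A_2$ or ${}^2\mathsf B_2$, so only finitely many pairs $(\mathbf H,\nu)$ occur, all of them displayed in \cite{lusztig1984charofreductivegroups} and \cite{chevie}. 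For each such $\psi$ I would fix an $F$-stable maximal torus $\mathbf T'\le\mathbf H$ of $\mathbf G$ with $s\in\mathbf T'^F$ such that $\nu$ occurs in $R^{\mathbf H}_{\mathbf T'}(1_{\mathbf T'^F})$ and every constituent of $R^{\mathbf G}_{\mathbf T'}(\hat s)$ (equivalently, of $R^{\mathbf H}_{\mathbf T'}(1)$) has multiplicity $\pm1$; the few relevant decomposition patterns make this possible. After replacing $\psi$ by a $\mathbf G^F$-conjugate one may also assume $\hat s$ is fixed by a set of representatives of $\Gamma_\psi$ modulo inner automorphisms. Then the disconnected-group Deligne--Lusztig construction from the proof of Proposition~\ref{propextunip}, carried out with $R^{\mathbf G}_{\mathbf T'}(\hat s)$ in place of $R^{\mathbf G}_{\mathbf T}(1)$, extends this virtual character to $\mathbf G^F\rtimes\Gamma_\psi$ in such a way that its irreducible constituents extend those of $R^{\mathbf G}_{\mathbf T'}(\hat s)$ bijectively and with multiplicity one; this yields a canonical extension $\hat\psi$ of $\psi$. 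The $\mathcal H_\psi$-invariance of $\hat\psi$ follows from $R^{\mathbf G}_{\mathbf T'}(\hat s)^\sigma=R^{\mathbf G}_{\mathbf T'}(\widehat{s^{\,b}})$ together with the fact that, for $\sigma\in\mathcal H_\psi$, Proposition~\ref{propjordan} forces $s^b$ to be $\mathbf G^F$-conjugate, hence $N_{\mathbf G^F}(\mathbf T')$-conjugate, to $s$ (as $C_{\mathbf G}(s)$ is connected), so that $R^{\mathbf G}_{\mathbf T'}(\hat s)$ is $\mathcal H_\psi$-stable and the uniqueness of $\hat\psi$ transfers this invariance; the graph--field part of the equivariance is controlled exactly as in the proof of Proposition~\ref{propregularext}, via \cite[Lemma~3.5]{johansson2020}. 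Collecting the three cases gives item~(2), and together with item~(1) this proves the theorem.

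I expect this last step --- the uniform treatment of the non-unipotent, non-regular $p'$-characters $\chi^{\mathbf G}_{s,\nu}$ --- to be the main obstacle: one has to verify the multiplicity-$\pm1$ property of $R^{\mathbf G}_{\mathbf T'}(\hat s)$ for every admissible semisimple $s$, arrange the parameter $\hat s$ to be compatible with the graph--field automorphisms, and combine the $\mathcal H$-bookkeeping of Proposition~\ref{propextunip} with the stabiliser bookkeeping of Proposition~\ref{propregularext} simultaneously. The rest is routine assembly of the propositions proved above.
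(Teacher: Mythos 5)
Your handling of item (1) and of the unipotent and regular global characters, as well as of the local characters, matches the paper. The divergence is in your third case, the characters $\chi^{\mathbf G}_{s,\nu}$ with $s\neq 1$ that are neither regular nor covered by Proposition \ref{propextunip}, and there your argument is a sketch with acknowledged holes rather than a proof. Running the disconnected-group Deligne--Lusztig construction of Proposition \ref{propextunip} with $R^{\mathbf G}_{\mathbf T'}(\hat s)$ in place of $R^{\mathbf G}_{\mathbf T}(1)$ is not a routine substitution: besides the multiplicity-$\pm1$ verification you defer, the construction requires choosing an extension of the \emph{nontrivial} linear character $\hat s$ of $\underline{\mathbf T'}^{\tau F_0}$ to $\underline{\mathbf T'}^{\tau F_0}\rtimes\langle\tau\rangle$, and the $\mathcal H$-invariance argument of Proposition \ref{propextunip} breaks at exactly this point --- there the invariance of $\hat{\underline{\eta}}$ came for free because the trivial character and its trivial extension are manifestly $\sigma$-fixed, whereas for $\hat s\neq 1$ the Galois automorphism can move the chosen extension by a linear character of $\langle\tau\rangle$, and you would have to control this (this is essentially the same delicacy that Proposition \ref{propregularext} resolves via the Gelfand--Graev character and \cite[Lemma 3.5]{johansson2020}, not something that transfers automatically).

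The paper closes this case much more cheaply, and you missed the observation that makes it possible: inspecting the generic character tables shows that every irreducible $p'$-character of $\mathbf G^F$ is unipotent, regular, \emph{real}, or \emph{semisimple}. Since the outer automorphism group of $\mathbf G^F$ is cyclic of odd order $2f+1$, Lemma \ref{lemmaoddorderreal} hands you a unique real, hence $(\Gamma\times\mathcal H)_\psi$-invariant, extension of any real character for free, and the semisimple characters are already covered by \cite[Proposition 3.6]{johansson2020}. So the residual case you identify as ``the main obstacle'' disappears entirely; no new Deligne--Lusztig machinery is needed. As written, your proposal does not constitute a complete proof of the theorem, though the assembly of the other ingredients is correct and agrees with the paper.
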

\begin{proof}
We use the notation established in the previous sections.
We can choose $N:=N_{\mathbf{G}^F}(\mathbf{T})$ by \cite[Theorem 8.4]{malle2008height0} and we know from Proposition \ref{propequivbijection} that there exists a $\Gamma \times \mathcal{H}$-equivariant bijection $\Omega: \Irr_{p'}(N) \rightarrow  \Irr_{p'}(\mathbf{G}^F).$ Looking at the generic character table of $\mathbf{G}^F$ as given in \cite{chevie}, we see that every irreducible global $p'$-character is either unipotent, regular, real, or semisimple. 
Thus, every $\psi \in \Irr_{p'}(\mathbf{G}^F)$ can be extended to a $(\Gamma \times \mathcal{H})_\psi$-invariant character of $\mathbf{G}^F \times \Gamma_\psi$ by Proposition \ref{propextunip}, Propostion \ref{propregularext},  Lemma \ref{lemmaoddorderreal}, or \cite[Proposition 3.6]{johansson2020}. The same is possible for the local characters by Proposition \ref{propextlocal}. 
\end{proof}

\section{Groups with non-generic Sylow normalizers} \label{sectionnonconvenient}
In this section, we consider the groups and primes for which the normalizer of a Sylow $p$-subgroup does not have to lie in the normalizer of a Sylow $\Phi^{(p)}$-torus. This happens for the primes and Suzuki and Ree groups that were excluded before as well as for some groups of type $\mathsf{A}_2$, $^2\mathsf{A}_2$, and $\mathsf{G}_2$ for the prime $p=3$ \cite[Theorem 5.14 and Theorem 8.4]{malle2008height0} and of type $\mathsf{C}_n$ for the prime $p=2$ \cite[Theorem 5.19]{malle2008height0}. 
The latter case has already been settled in \cite[Theorem A]{ruhstorfer2021inductive}. 
We consider the remaining groups since we want to complete the case of Suzuki and Ree groups and it seems natural to look at the special cases for other groups of Lie type.
\subsection{Suzuki and Ree groups} We verify the inductive McKay--Navarro condition by looking at the global and local characters case by case. The automorphism group is known from above.
\begin{prop}\label{propspecialg2f4}
Condition \ref{imck} is satisfied for $f \geq 1$ and
\begin{enumerate}[(a)]
\item ${}^2\mathsf{G}_2(3^{2f+1})$ and $p=2$;
\item 
${}^2\mathsf{F}_4(2^{2f+1})$, $p=3$ and $2^{2f+1} \equiv 2,5 \mod 9$.
\end{enumerate}
\end{prop}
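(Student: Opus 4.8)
The plan is to treat the two cases separately, but in both cases to verify Condition \ref{imck} directly by identifying an explicit $\Gamma$-stable intermediate subgroup $N$ with $N_{\mathbf{G}^F}(R) \subseteq N \subsetneq G$, exhibiting a $\Gamma \times \mathcal{H}$-equivariant bijection $\Omega\colon \Irr_{p'}(G) \to \Irr_{p'}(N)$, and then producing the required $(\Gamma\times\mathcal{H})_\psi$-invariant extensions on both sides. The essential difference from Section 3 is that here the Sylow normalizer is \emph{not} contained in the normalizer of a Sylow $\Phi^{(p)}$-torus, so $N$ must be chosen by hand; in case (a) for $p=2$ in ${}^2\mathsf{G}_2(3^{2f+1})$ one expects $R\in\Syl_2(G)$ to be elementary abelian of order $8$ with $N_G(R)$ of shape $2^3{:}7{:}3$ (a Frobenius-type group), and in case (b), $R\in\Syl_3({}^2\mathsf{F}_4(q^2))$ with $q^2\equiv 2,5\bmod 9$ has order $27$ and the relevant $\Phi^{(3)}$-information is split between $q^2+1$ and $q^4-q^2+1$, which is exactly why Malle's theorem does not apply uniformly. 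I would first pin down the precise isomorphism type of $R$ and the structure of $N:=N_G(R)$ from the known tables of maximal subgroups (\cite{kleidman1988maxsubgroups}, \cite{malle1991maximalsubgroups}) and the generic character table in \cite{chevie}.

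Next, for the bijection $\Omega$, I would enumerate $\Irr_{p'}(G)$ using the generic character table (as in Section 3, these characters are unipotent, regular, real, or semisimple, and Proposition \ref{propjordan} together with Proposition \ref{propextunip} already controls the Galois action on the unipotent ones) and $\Irr_{p'}(N)$ from the explicit structure of $N$, then match degrees and Galois orbits. Because $|N|$ and the number of relevant characters are small, the matching can be made componentwise: for the trivial semisimple label $s=1$ one matches unipotent $p'$-characters with characters inflated from the relative Weyl quotient of $N$, and for nontrivial $s$ one matches induced characters; the key point is that on both sides $\mathcal{H}$ permutes the characters according to the same permutation of the labels (of $s$ by $s\mapsto s^b$, and of the small ``Weyl'' part by the rule already computed in the proof of Proposition \ref{propjordan}, e.g. the $\mathcal{W}\leftrightarrow\overline{\mathcal{W}}$ swap governed by whether a fourth or third root of unity is fixed). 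Number-theoretic constraints in the spirit of Lemma \ref{lemmamodulo} (adapted to $l=3$, $q^2=3^{2f+1}$ for part (a), and to the congruence $q^2\equiv 2,5\bmod 9$ for part (b)) are what force $\mathcal{H}$ to act trivially on the characters of the small groups that have non-rational values, so that the permutations on the two sides genuinely agree.

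For part (2) of Condition \ref{imck}, the extensions, I would reuse the machinery already set up: on the global side, Proposition \ref{propextunip} (unipotent), Proposition \ref{propregularext} (regular), Lemma \ref{lemmaoddorderreal} (real, using that $\Out(G)=\langle\gamma\rangle$ has odd order $2f+1$), and \cite[Proposition 3.6]{johansson2020} (semisimple) cover every global $p'$-character; these propositions are stated for general Suzuki/Ree $\mathbf{G}^F$ and do not depend on $p$, so they apply verbatim. On the local side I would argue as in Proposition \ref{propextlocal}: pass to a cyclic $D\leq\Gamma_\psi$ of odd order containing all outer classes in $\Gamma_\psi$, and then for each character of $N$ — linear, real/integer-valued, or induced from a linear or real character of a subgroup with multiplicity one — invoke Remark \ref{remextension}(\ref{remlinear}), (\ref{reminduction}) and Lemma \ref{lemmaoddorderreal} to build a $(\Gamma\times\mathcal{H})_\psi$-invariant extension, finally extending to the remaining inner automorphisms.

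The main obstacle I anticipate is purely the bookkeeping in case (b): since $\Phi^{(3)}$ is genuinely ambiguous for ${}^2\mathsf{F}_4(q^2)$ with $q^2\equiv 2,5\bmod 9$, one cannot lean on \cite[Theorem 8.4]{malle2008height0} to hand over $N$ and its parametrization, so the structure of $N_G(R)$, the list of its $3'$-characters, and their Galois behaviour must be extracted directly and matched against the $3'$-part of the generic character table of ${}^2\mathsf{F}_4(q^2)$; checking that the degree-and-Galois matching is consistent — in particular that the character $s_2$-series subtleties from Proposition \ref{propjordan} line up with whatever small non-rational characters appear in $N$ — is where care is needed. Part (a) should be comparatively routine, since $2^3{:}7{:}3$ is a concrete solvable group whose character theory is elementary and whose intersection with the (few) $2'$-characters of ${}^2\mathsf{G}_2(q^2)$ is easy to tabulate.
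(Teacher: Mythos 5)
Your proposal follows essentially the same route as the paper: an explicit, case-by-case verification with $N$ the Sylow normalizer itself ($2^3{:}7{:}3$ in case (a), and $\SU_3(2).2$ — the normalizer of the order-$27$ Sylow $3$-subgroup — in case (b)), a bijection obtained by matching degrees and Galois orbits read off from the generic character tables, and extensions supplied by Proposition \ref{propextunip}, Lemma \ref{lemmaoddorderreal}, and Remark \ref{remextension}(\ref{reminduction}). The paper fills in the explicit character lists and Galois permutations (e.g. the $(4,5)(6,7)$ swap in $\Irr_{2'}({}^2\mathsf{G}_2(q^2))$ governed by a third root of unity, and the two non-real degree-$2$ characters of $\SU_3(2).2$ handled by induction from linear characters of $((C_3\times C_3){:}C_3){:}C_8$), but these are exactly the computations your plan calls for.
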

\begin{proof}
First let $G={}^2\mathsf{G}_2(3^{2f+1})$ and $p=2$. As in \cite[Proof of Theorem 17.1]{isaacsmallenavarro2007}, $R \in \Syl_2(G)$ is elementary abelian of order $8$ and $N=N_G(R)=R \cdot H$ with $H \cong C_7 \rtimes C_3$ a Frobenius group. 
Then, we have
\begin{align*}
{\Irr}_{3'}(N)=\{{\theta}_i={\Inf}_{C_3}^N(\widetilde{\theta}_i) \mid \widetilde{\theta_i} \in \Irr(C_3), 1 \leq i \leq 3 \} \cup \{\psi_i={\Inf}_{H}^N({\Ind}_{C_7}^H(\tau_i)) \mid 1 \neq \tau_i \in \Irr(C_7) \} &\\ \cup \{\lambda_1, \lambda_2 , \lambda_3 \mid \lambda_1+\lambda_2+\lambda_3= {\Ind}_{R}^N(\chi), \; 1 \neq \chi \in \Irr(R)\} &,
\end{align*}
i.e. $N$ has three linear characters, two characters of degree $3$, and three characters of degree $7$. From \cite[Proof of Theorem 17.1]{isaacsmallenavarro2007}, we know that $F_3$ acts trivially on $N$ and all of these characters. An explicit construction of the character values shows us, after choosing a suitable labeling of the characters, that
$$ {\theta}_2^\sigma = \left\{ \begin{array}{ll}
{\theta}_2 & \textrm{ if } \xi_3^\sigma=\xi_3, \\
{\theta}_3 & \textrm{ if } \xi_3^\sigma=\xi_3^2, \\
\end{array} \right., \quad
\lambda_2^\sigma = \left\{ \begin{array}{ll}
\lambda_2 & \textrm{ if } \xi_3^\sigma=\xi_3, \\
\lambda_3 & \textrm{ if } \xi_3^\sigma=\xi_3^2, \\
\end{array} \right.
$$
where $\xi_3 \in \mathbb{C}^\times$ is a third root of unity. All other characters of $N$ are fixed by $\mathcal{H}$. 

It is clear that the linear characters $\theta_i$ can be extended to $G \rtimes \langle F_3 \rangle$ such that they are $\mathcal{H}_{\theta_i }$-invariant. For the characters $\psi_1$ and $\psi_2,$ an extension can be found as in Remark \ref{remextension}(\ref{reminduction}) and by inflating the obtained character. 
In the same way, we see that there is an $\mathcal{H}$-invariant extension of $\lambda_1+\lambda_2+\lambda_3= {\Ind}_{R}^N(\chi)$ to $N \rtimes \langle F_3 \rangle$. This extension $\hat{\lambda}$ decomposes into three irreducible characters $\hat{\lambda}_1$, $\hat{\lambda}_2$, $\hat{\lambda}_3$ that extend $\lambda_1,\lambda_2$ and $\lambda_3$. It is easy to see that they are $\mathcal{H}_{\lambda_i}$-invariant, respectively. 

In the global case, we have  $$\Irr_{2'}(G)=\{\chi_1,\chi_4,\chi_5, \chi_6, \chi_7, \chi_8, \chi_a, \chi_a'\}$$ in the notation of \cite[Table 4]{isaacsmallenavarro2007}. The automorphism $F_3$ acts trivially on them since they are unipotent or uniquely determined by their degree. The Galois automorphism $\sigma \in  \mathcal{H}$ acts by permuting the indices as $(4,5)(6,7)$ if we have $\xi_3^\sigma=\xi_3^2$ and trivially otherwise. Thus, there exists a $\Gamma \times \mathcal{H}$-equivariant bijection. We find $\mathcal{H}_{\chi}$-invariant character extensions to $G \rtimes \langle F_3 \rangle $ for all $\chi \in \Irr_{2'}(G)$ by Lemma \ref{lemmaoddorderreal} and Proposition \ref{propextunip}. Thus, Condition \ref{imck} is satisfied. 

Let $G={}^2\mathsf{F}_4(2^{2f+1})$ for $f \geq 1$, $p=3$ and assume $2^{2f+1} \equiv 2,5 \mod 9$. 
In the global case, we see that $G$ has six unipotent characters of $3'$-degree and three other $3'$-characters that all have distinct degrees \cite{chevie}. 
Thus, the characters are all $\Gamma$-invariant, $\mathcal{H}$-invariant, and real.
Since $F_2$ has odd order, there exists a unique $\mathcal{H}$-invariant extension to $G \rtimes \Gamma $ for every character in $\Irr_{3'}(G)$ by Lemma \ref{lemmaoddorderreal}. 

As mentioned in \cite[Proof of Theorem 8.4]{malle2008height0}, the normalizer of a Sylow $3$-subgroup of $G$ is $N \cong \SU_3(2).2$. Thus, we can explicitly compute ${\Irr}_{3'}(N)=\{\chi_i \mid 1 \leq i \leq 9\}.$
We know by \cite[Proof of Proposition 3.16]{malle2008extuni} that $\Gamma$ acts trivially on ${\Irr}_{3'}(N)$. By looking at the character values, we see that $\mathcal{H}$ acts trivially on the characters and all characters except $\chi_6,\chi_7$ are real.
For the characters in ${\Irr}_{3'}(N)\setminus \{\chi_6,\chi_7\}$ we obtain $\mathcal{H}$-invariant extensions to $N \rtimes \Gamma$ again by Lemma \ref{lemmaoddorderreal}.

The characters $\chi_6$, $\chi_7$ have both degree $2$ and we can show with \textsf{GAP} \cite{GAP4} that both are induced by two linear characters $\lambda_6, \lambda_6'$ and $\lambda_7, \lambda_7'$ of $((C_3 \times C_3) : C_3) : C_8 \leq N$, respectively. Since $F_2$ fixes $\chi_6$ and $\chi_7$, both $\lambda_6$ and $\lambda_7$ are fixed or mapped to $\lambda_6'$ and $\lambda_7'$, respectively.   The orbit cannot have length $2$ because $F_2$ has odd order, thus $\lambda_6$, $\lambda_7$ are fixed by $F_2$. Since $\chi_6$ is $\mathcal{H}$-invariant, $\lambda_6$ and $\lambda_6^\sigma$ are conjugate by some element of $N$ for every $\sigma \in \mathcal{H}$. It follows as before that there is an $\mathcal{H}$-invariant extension of $\chi_6$ to $N \rtimes \langle F_2 \rangle$. The same can be done for $\chi_7$ and we have verified Condition \ref{imck}.
\end{proof} 
\subsection{Special linear and special unitary groups}
Let $p=3$ and $X=\PSL_3(q)$ with $q \equiv 4,7 \mod 9$ or $X=\PSU_3(q)$ with $q \neq 2$ and $q \equiv 2,5 \mod 9.$
The inductive McKay condition was verified for $X$ and $p$ in \cite[Section 3.1 and 3.2]{malle2008extuni}. We verify the inductive McKay--Navarro condition for these groups by recalling the considerations from there and extending them to the stronger condition. Note that the assumptions for Condition \ref{imck} are not satisfied anymore and we have to consider the original inductive McKay--Navarro condition from \cite[Definition 3.5]{navarro2019reduction}.

Note that we do not consider $\PSU_3(2)$ since it is solvable.
The group $\PSL_3(4)$ has exceptional Schur multiplier and is considered separately in Proposition \ref{propsmall}. For all other groups, the Schur multiplier of $X$ has order $3$ and we can consider $X$ itself (instead of its universal covering group) by \cite[Lemma 5.1]{johansson2020}. We follow \cite[Section 3.1 and 3.2]{malle2008extuni} and use the notation from there. Let $S:=\SL_3(q)$ or $\SU_3(q)$, respectively.
Since the irreducible characters of $S$ with $Z(S)$ in their kernel are in bijection with the irreducible characters of $X \cong S/Z(S)$, we can consider the irreducible $3'$-characters of $S$ with $Z(S)$ in their kernel. We use this bijection without further notice.
\begin{lemma} \label{lemmaSLSUHinv}
The irreducible $3'$-characters $\rho_1,$ $\rho_2,$ $\rho_3$, $ \varphi_1,$ $ \varphi_2,$ $ \varphi_3$ of $S$ are invariant under $\mathcal{H}$.
\end{lemma}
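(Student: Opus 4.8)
The plan is to split the six characters according to their rational series and to prove that each of them is in fact rational-valued; since a rational-valued character is fixed by all of $\Gal(\mathbb{Q}^{\ab}/\mathbb{Q})$, it is \emph{a fortiori} $\mathcal{H}$-invariant. Among the six characters $\rho_1,\rho_2,\rho_3,\varphi_1,\varphi_2,\varphi_3$ — i.e.\ the irreducible $3'$-characters of $S$ with $Z(S)$ in their kernel — three are the unipotent characters of $S$ (of degrees $1$, $q^2\pm q$ and $q^3$). These are the irreducible restrictions of the corresponding unipotent characters of $\GL_3(q)$, resp.\ $\mathrm{GU}_3(q)$, which are rational-valued; alternatively one reads this off the generic character table \cite{chevie}. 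Hence these three are $\mathcal{H}$-invariant.

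The remaining three are the exceptional characters: they are the three distinct constituents of the restriction to $S$ of a single irreducible character $\chi$ of $\widetilde S\in\{\GL_3(q),\mathrm{GU}_3(q)\}$, where $\chi$ lies in the Lusztig series of a semisimple element $s$ of order $3$ whose centraliser in $\widetilde S$ is a maximal torus. Since $\widetilde S$ is connected reductive with connected centre and $s$ is conjugate to $s^b$ for every $b$ coprime to $3$ (in particular to $s^{-1}$), the Galois-equivariant Jordan decomposition (Theorem~B, already \cite{srinivasanvinroot2019} in this case) shows that $\chi^\sigma$ lies in the Lusztig series of $s^b$, which coincides with that of $s$ and contains only $\chi$; thus $\chi$ is $\mathcal{H}$-invariant, and consequently $\mathcal{H}$ permutes the set of its three constituents. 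As this set is a single orbit under the diagonal automorphisms $\widetilde S/SZ(\widetilde S)\cong C_3$, and the Galois action commutes with conjugation, $\mathcal{H}$ acts on it through the centraliser of a regular copy of $C_3$ in $\Sym(3)$, that is, through a subgroup of $C_3$.

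It remains to rule out that some $\sigma\in\mathcal{H}$ cyclically permutes the three exceptional characters; this is the step I expect to be the main obstacle, and there is no purely structural shortcut — for $\SL_2(q)$ the analogous exceptional characters are genuinely irrational. I would settle it by inspection of the known generic character table of $S$ (see \cite{chevie}): reading off the values of the exceptional characters on the three classes of regular unipotent elements of $S$, one checks that they are rational integers, so each is rational-valued. This is exactly where the hypothesis $q\equiv 4,7\bmod 9$ (resp.\ $q\equiv 2,5\bmod 9$) is used — it guarantees that no $9$-th roots of unity occur in these values, whereas in the case $9\mid q-1$ (resp.\ $9\mid q+1$) the statement would genuinely fail. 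Having established rationality of all six characters, the lemma follows.
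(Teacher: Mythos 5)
Your proof is correct and, at its core, the same as the paper's: the paper disposes of the unipotent characters by noting they have pairwise distinct degrees, and for $\varphi_1,\varphi_2,\varphi_3$ it likewise reduces to checking that the values which distinguish the three characters (listed explicitly in Malle's paper, on the classes where the $\widetilde S$-class splits) are rational, which is exactly your final table inspection. One caveat on a side remark: the congruence $q\equiv 4,7\bmod 9$ (resp.\ $2,5\bmod 9$) is not what makes these values rational --- they are rational whenever $3\mid q\mp 1$, since the would-be cubic Gauss sums pair off as $g(\chi_3)g(\overline{\chi_3})=\pm q$ --- rather, the mod-$9$ hypothesis is used elsewhere to control the Sylow $3$-structure and which characters have $3'$-degree.
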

\begin{proof}
The unipotent characters $\rho_1,$ $\rho_2,$ $\rho_3$ have distinct degrees and are thus $\mathcal{H}$-invariant. The character values of $ \varphi_1,$ $ \varphi_2,$ $ \varphi_3$ that are different for the three characters are given in \cite[Section 3.1 and 3.2]{malle2008extuni} and we see that they are rational.
\end{proof}
From \cite[Lemma 3.4 and Section 3.2]{malle2008extuni} we know about the structure of $\Out(S)$, the outer automorphism group of $S$. We write $D:= \langle \psi, \gamma,  \delta \rangle \cong \Out(S) \cong S_3 \times C_f$. Let $l$ be the prime with $q=l^f$ and note that the condition on $q$ ensures that $f$ is odd in the case of unitary groups. 
\begin{lemma} \label{lemmaSLHextglob}
For $X=\PSL_3(q)$, every $\tau \in \Irr_{3'}(X) $ has an $\mathcal{H}_\tau$-invariant extension to $X \rtimes D_\tau$.
\end{lemma}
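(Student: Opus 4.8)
The plan is to split $\Irr_{3'}(X)$ into the types of characters appearing in the character table of $\SL_3(q)$ and treat each class with the tools already set up in the paper. First I would recall from \cite[Section 3.1]{malle2008extuni} the list of irreducible $3'$-characters of $S=\SL_3(q)$ whose kernel contains $Z(S)$: the unipotent characters $\rho_1,\rho_2,\rho_3$, the characters $\varphi_1,\varphi_2,\varphi_3$ of Lemma \ref{lemmaSLSUHinv}, and the remaining $3'$-characters, which are (up to the action of $D$) constituents of Deligne--Lusztig characters $R_{\mathbf{T}}^{\mathbf{G}}(\theta)$ induced from suitable tori. The point of Lemma \ref{lemmaSLSUHinv} is that the first six characters are $\mathcal{H}$-invariant, and among them $\rho_1,\rho_2,\rho_3$ are real (distinct degrees), so Lemma \ref{lemmaoddorderreal} applied along a $2'$-subgroup of $D_\tau$ together with the canonical extension of Remark \ref{remextension}(a) to the inner part gives the required $\mathcal{H}_\tau$-invariant extension to $X\rtimes D_\tau$; for $\varphi_1,\varphi_2,\varphi_3$ one uses instead that they are semisimple or regular, so Proposition \ref{propregularext} and \cite[Proposition 3.6]{johansson2020} apply (noting these results were stated for the Suzuki and Ree groups but the Gelfand--Graev argument of Proposition \ref{propregularext} and the Jordan-decomposition argument of \cite[Proposition 3.6]{johansson2020} go through verbatim for $\SL_3(q)$).

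For the remaining characters $\tau$, I would follow the strategy of Proposition \ref{propregularext}: realize $\tau$ as a multiplicity-one constituent of a Deligne--Lusztig character (or of a Gelfand--Graev character, if $\tau$ is regular) that is induced from a $D_\tau$-invariant datum, extend that induced character trivially (Remark \ref{remextension}(b),(c)) to $X\rtimes D_\tau$, and pick out the unique constituent over $\tau$; the uniqueness then forces $(\Gamma\times\mathcal H)_\tau$-invariance of the chosen extension exactly as in the proof of Proposition \ref{propregularext}. The input needed here is that the linear character of the unipotent radical $\mathbf U^F$ (resp. the character $\theta$ of the relevant torus) can be chosen $\langle\psi,\gamma\rangle$-stable and that its $\sigma$-twist differs from it by conjugation by an element of $\mathbf T^{\psi}$; this is precisely \cite[Lemma 3.5]{johansson2020}, which is stated for general groups of Lie type with a Steinberg endomorphism, so it covers $\SL_3(q)$.

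The main obstacle will be the characters whose $D$-stabilizer is not cyclic and whose defining Deligne--Lusztig datum is not $D_\tau$-invariant on the nose — here one must, as in \cite[Section 3.1]{malle2008extuni}, first move to an appropriate $D_\tau$-conjugate torus (using that all Sylow $\Phi_d$-tori attached to the same $d$ are $X$-conjugate) before applying the construction, and then verify that the Galois twist can be absorbed by an element of the relevant subtorus. A secondary technical point is the interplay between the diagonal automorphism $\delta$ and $Z(S)$: since we work inside $X=S/Z(S)$, one must check that the chosen extension of $\tau$ has $Z(S)$ in its kernel, which follows because the Gelfand--Graev / Deligne--Lusztig constituents we use are those lying above characters of $X$ by construction. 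Once these cases are dispatched, assembling the pieces gives the claim for every $\tau\in\Irr_{3'}(X)$.
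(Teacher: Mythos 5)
There is a genuine gap in your treatment of the unipotent characters. For $\rho_1,\rho_2,\rho_3$ the stabilizer in $D\cong S_3\times C_f$ is all of $D$, which contains the graph automorphism $\gamma$ of order $2$ (and $\psi$ may also have even order, e.g.\ $q=4$). Lemma \ref{lemmaoddorderreal} only produces extensions over subgroups of \emph{odd} index, so ``applying it along a $2'$-subgroup of $D_\tau$'' does not yield an extension to $X\rtimes D_\tau$: you never extend past the $2$-part of $D$, and reality of $\rho_i$ gives you no control there. The paper instead exploits that $\rho_1,\rho_2,\rho_3$ lie in the principal series: it extends the permutation character $\Ind_B^{G}(1)$ (for $G=\PGL_3(q)$, $B$ a Borel) canonically to $G\rtimes\langle\psi\rangle$ and to $G\rtimes\langle\gamma\rangle$, and uses the uniqueness of the constituent over $\rho_i$ in $\Ind_{B\rtimes\langle\gamma,\psi\rangle}^{G\rtimes\langle\gamma,\psi\rangle}(1)$ to pin down an $\mathcal{H}$-invariant extension. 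Some argument of this Harish--Chandra type (or another device handling $\gamma$) is indispensable and is missing from your proposal.

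A second issue is that your case division does not match the actual character set: for $q\equiv 4,7\bmod 9$ the set $\Irr_{3'}(X)$ consists of \emph{exactly} the six characters $\rho_1,\rho_2,\rho_3,\varphi_1,\varphi_2,\varphi_3$, so your third class of ``remaining $3'$-characters'' is empty and the accompanying Deligne--Lusztig machinery is not needed. For the $\varphi_i$ your instinct (Gelfand--Graev/semisimple constituent with multiplicity one) is the right one, but the relevant point is that $\delta$ permutes $\varphi_1,\varphi_2,\varphi_3$, so $D_{\varphi_i}=\langle\psi,\gamma\rangle$ up to conjugacy; the paper produces an $\mathcal{H}$-invariant extension $\hat\varphi_1\in\Irr(S\langle\psi,\gamma\rangle)$ via \cite[Corollary 6.9]{ruhstorfer2017navarro} and obtains the extensions of $\varphi_2,\varphi_3$ by conjugating with $\delta$. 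You should make this orbit argument explicit rather than asserting that Proposition \ref{propregularext} and \cite[Proposition 3.6]{johansson2020} ``go through verbatim'' for $\SL_3(q)$, since those statements are proved for groups with trivial (or connected) centre and the disconnected centre of $\SL_3$ is precisely what causes the $\delta$-orbit of Gelfand--Graev characters.
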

\begin{proof}
Let $G:=\PGL_3(q)$ and $B$ be a Borel subgroup of $G$. The unipotent characters $\rho_1, \rho_2, \rho_3$ can be extended to $\tilde{\rho}_1, \tilde{\rho}_2, \tilde{\rho}_3 \in \Irr(G \rtimes \langle \psi \rangle)$ as in the proof of \cite[Theorem 2.4]{malle2008extuni} and we have $$\Ind^{G \rtimes \langle \psi \rangle}_{B \rtimes \langle \psi \rangle}(1)=\tilde{\rho}_1+ 2\tilde{\rho}_2+\tilde{\rho}_3.$$
Similary, we get $$\Ind^{G \rtimes \langle \gamma \rangle}_{B \rtimes \langle \gamma \rangle}(1)=\hat{\rho}_1+ 2\hat{\rho}_2+\hat{\rho}_3$$ with $\hat{\rho}_1, \hat{\rho}_2, \hat{\rho}_3 \in \Irr(G \rtimes \langle \gamma \rangle)$ extending $\rho_1, \rho_2, \rho_3$ by  \cite[p.447f]{malledarstmethoden1991}.
Since we know that all unipotent characters extend to $G \rtimes  \langle \gamma, \psi \rangle $, at least one of these extensions is a constituent of $\Ind^{G \rtimes \langle \gamma, \psi \rangle}_{B \rtimes \langle \gamma, \psi \rangle}(1)$. These constituents also extend $\hat{\rho}_i$ and $\tilde{\rho}_i$. Thus, the constituent extending $\rho_i$ is unique in the permutation character and it follows that it is $\mathcal{H}$-invariant.

The semisimple character $\varphi_1$ is a constituent of the dual of the Gelfand-Graev character $\Gamma_1$ of $S$ and satisfies $((S \rtimes \langle \delta \rangle)\langle \psi, \gamma\rangle)_{\varphi_1}=(S \rtimes \langle \delta \rangle)_{\varphi_1} \langle \psi, \gamma \rangle_{\varphi_1}=S  \langle \psi, \gamma \rangle$. As in \cite[Corollary 6.9]{ruhstorfer2017navarro} we find an extension $\hat{\varphi}_1 \in \Irr(S  \langle \psi, \gamma \rangle)$ that is $ \mathcal{H}$-invariant. Note that we can use this result since Ruhstorfer does not use any properties coming from the prime corresponding to $\mathcal{H}$. 

The characters $\hat{\varphi}_1^\delta \in \Irr(S  \langle \psi \delta, \gamma \delta \rangle)$ and $\hat{\varphi}_1^{\delta^2} \in \Irr(S  \langle \psi \delta^2, \gamma \delta^2 \rangle)$ are extensions of $\varphi_2$ and $\varphi_3$, respectively. Since they are $\mathcal{H}$-invariant, this shows the claim.

\end{proof}
\begin{lemma} \label{lemmaSUHextglob}
For $X=\PSU_3(q)$, every $\tau \in \Irr_{3'}(X) $ has an $\mathcal{H}$-invariant extension to $X \rtimes D_\tau$.
\end{lemma}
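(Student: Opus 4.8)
The plan is to mimic the proof of Lemma~\ref{lemmaSLHextglob}, taking into account that the graph automorphism of $\SL_3$ is now absorbed into the (cyclic, order $2f$) group of field automorphisms of $S=\SU_3(q)$ and that $f$ is odd. Following \cite[Section 3.2]{malle2008extuni} one has $\Irr_{3'}(X)=\{\rho_1,\rho_2,\rho_3,\varphi_1,\varphi_2,\varphi_3\}$, all of them $\mathcal{H}$-invariant by Lemma~\ref{lemmaSLSUHinv}, so genuinely $\mathcal{H}$-invariant extensions are required. I would build these for the corresponding characters of $S$ with $Z(S)$ in the kernel; any extension so obtained has $Z(S)$ in its kernel automatically (its restriction to the normal subgroup $S$ does, and $Z(S)\unlhd S\rtimes D_\tau$), hence descends to $X$. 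Writing $D=\langle\psi,\gamma,\delta\rangle\cong S_3\times C_f$ as in \cite[Section 3.2]{malle2008extuni}, with $\gamma$ the order-$2$ field automorphism inverting $\delta$ and $\psi$ generating the complementary $C_f$, the diagonal automorphism $\delta$ cyclically permutes $\varphi_1,\varphi_2,\varphi_3$ and fixes each $\rho_i$; hence $D_{\rho_i}=D$, $D_{\varphi_1}=\langle\psi,\gamma\rangle$, and $D_{\varphi_2},D_{\varphi_3}$ are the $\delta$-conjugates of $\langle\psi,\gamma\rangle$.

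For the unipotent characters, $\rho_1=1_X$ extends trivially. Since $X$ has $\mathbb{F}_q$-rank one, $\Ind_B^G(1)=1_G+\St_G$ is multiplicity free for $G=\operatorname{PGU}_3(q)$ and $B$ a $\langle\psi,\gamma\rangle$-stable Borel subgroup; extending this permutation character to $G\rtimes\langle\psi,\gamma\rangle$ pins down, as in Lemma~\ref{lemmaSLHextglob}, a canonical extension of $\rho_3=\St_X$ to $X\rtimes D$ that is $\mathcal{H}$-invariant because the permutation character comes from the trivial character (alternatively, the Steinberg character already has a rational-valued canonical extension to $X\rtimes D$). For the cuspidal unipotent character $\rho_2$ the principal series is unavailable; I would first use Lemma~\ref{lemmaoddorderreal} on the odd-index subgroup $X\rtimes\langle\delta,\psi\rangle$ (note $\rho_2$ is real by Lemma~\ref{lemmaSLSUHinv}) to get the unique real extension $\tilde\rho_2$, which is $\mathcal{H}$-invariant and, by uniqueness, $\langle\gamma\rangle$-stable, and then extend across $\langle\gamma\rangle$ using the explicit extension of the unipotent characters of $S$ to the full automorphism group constructed in \cite[Section 3.2]{malle2008extuni}, reading off $\mathcal{H}$-invariance from its character values.

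For the semisimple characters, $\varphi_1$ is a multiplicity-one constituent of the dual of a Gelfand--Graev character $\Gamma_1$ of $S$, and, exactly as in the $\PSL_3$ case, the stabilizer identity $\big((S\rtimes\langle\delta\rangle)\langle\psi,\gamma\rangle\big)_{\varphi_1}=(S\rtimes\langle\delta\rangle)_{\varphi_1}\cdot\langle\psi,\gamma\rangle_{\varphi_1}=S\langle\psi,\gamma\rangle$ holds. Since the construction in \cite[Corollary 6.9]{ruhstorfer2017navarro} does not involve the prime underlying $\mathcal{H}$, it yields an $\mathcal{H}$-invariant extension $\hat\varphi_1\in\Irr(S\langle\psi,\gamma\rangle)$ of $\varphi_1$, whence $\hat\varphi_1^{\delta}$ and $\hat\varphi_1^{\delta^2}$ are $\mathcal{H}$-invariant extensions of $\varphi_2$ and $\varphi_3$ over the $\delta$-conjugate subgroups. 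Descending to $X$ and extending to the inner automorphisms then finishes the argument.

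The one genuinely delicate point is expected to be the cuspidal unipotent character $\rho_2$ paired with the even-order generator $\gamma$ of $D$: Lemma~\ref{lemmaoddorderreal} covers only the odd-index subgroup $\langle\delta,\psi\rangle$, so the remaining index-$2$ step cannot be settled by a soft realness argument and forces one to invoke, and to check the Galois behaviour of, the explicit extension constructed in \cite{malle2008extuni}. A secondary point of care is bookkeeping which generators of $D$ are of field type (so that they commute with the data defining the canonical and Gelfand--Graev extensions) and that $\gamma$ is precisely the one inverting both $\delta$ and $Z(S)$.
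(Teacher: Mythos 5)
Your outline coincides with the paper's for $\rho_1$, $\rho_3$ and the semisimple characters (the paper disposes of these with "can be shown in the same way as for $\PSL_3(q)$"), and you have correctly isolated the difficulty: the cuspidal unipotent character $\rho_2$ against the even-order generator $\gamma$, where no realness or principal-series argument applies. But at exactly that point you stop. "Extend across $\langle\gamma\rangle$ using the explicit extension constructed in \cite[Section 3.2]{malle2008extuni}, reading off $\mathcal{H}$-invariance from its character values" is not a proof of the step that carries essentially all the content of this lemma, and the cited source does not do the work for you: \cite{malle2008extuni} establishes that $\rho_2$ extends to the full automorphism group, but it does not record the values of such an extension on the coset $X\gamma$, which is precisely what one needs in order to decide $\mathcal{H}$-invariance.

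The paper fills this gap with two separate computations. For $l$ odd it extends \cite[Proposition 2.1]{malle1990unitarygalois} (stated there only for $l\equiv 3\bmod 4$) to show that the only irrational value of an extension $\hat\rho_2\in\Irr(X\rtimes\langle\gamma\rangle)$ is $\sqrt{-q}$, and then expresses $i\sqrt{q}$ via the Gauss sum $l^{(f-1)/2}\sum_{k}\xi_l^{k^2}$ (times $i$ or not according to $l\bmod 4$) to verify that every $\sigma\in\mathcal{H}$ fixes it. For $q$ even no reference supplies the values at all, and the paper must determine the outer conjugacy classes of $X.2$ from \cite{brunat2007extensiong2graph} and \cite{simpsonframe1973slsucharacters}, show that $\hat\rho_2$ vanishes on all outer classes except $(1,\gamma)$ and an inverse pair $A$, $A^{-1}$, pin down $|C_{\tilde X}(a)|=4q$ by a class-equation argument, and extract $\hat\rho_2(a)=\sqrt{q/2}\,(\xi_8+\xi_8^3)$ from orthogonality before $\mathcal{H}$-invariance becomes visible. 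Since $q\equiv 2,5\bmod 9$ includes even $q$ (e.g.\ $q=32$), that case cannot be waved away. The final gluing you sketch (a common extension over $\langle\delta,\psi\rangle$ and $\langle\gamma\rangle$, with the twisting linear character of $D$ forced to be trivial) is exactly the paper's, so what you have is a correct skeleton with the decisive computation left as an acknowledged but unfilled hole.
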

\begin{proof}
For the semisimple characters as well as for $\rho_1$ and $\rho_3$, this can be shown in the same way as for $\PSL_3(q)$. However, the unipotent character $\rho_2$ does not lie in the principal series of $X$ and has to be treated differently.

We first show that there is an $\mathcal{H}$-invariant extension of $\rho_2$ to $X \rtimes \langle \gamma \rangle $.
Assume that $l$ is odd. The two extensions of $\rho_2$ to $X \rtimes \langle \gamma \rangle $ have already been considered in \cite[Proposition 2.1]{malle1990unitarygalois} for $l \equiv 3 \mod 4$. In the same way as in the proof given there, one can show for all odd $l$ that the only non-rational character value of an extension $\hat{\rho}_2 \in \Irr(X \rtimes \langle \gamma \rangle)$ is $\sqrt{-q}$. By the law of quadratic reciprocity, $3$ is a square modulo $l$ if $l \equiv 1 \mod 4$ and not a square otherwise. Since we have
$$i\sqrt{q}=\left\{\begin{array}{ll}
l^{(f-1)/2}\sum_{k=0}^{l-1}\xi_l^{k^2} \cdot i & \text{if } l \equiv 1 \mod 4, \\
l^{(f-1)/2}\sum_{k=0}^{l-1}\xi_l^{k^2} & \text{if } l \equiv 3 \mod 4, \\
\end{array}\right.$$
with $\xi_p$ a primitive $p$-th root of unity and $i$ the imaginary unit, it is easy to see that $\sigma \in \mathcal{H}$ fixes $i\sqrt{q}$. Thus, $\hat{\rho}_2$ is $\mathcal{H}$-invariant.

If $q$ is even, we first have to determine the character values for the extensions of $\rho_2$ to $\tilde{X}:=X \rtimes \langle \gamma \rangle $. We first determine the outer conjugacy classes of $\tilde{X}$, i.e. the conjugacy classes that do not lie in $X \unlhd \tilde{X}$. We can parametrize the outer conjugacy classes as described in \cite[Method 3.1]{brunat2007extensiong2graph} by
$$\left\{(g,1)x_i \left|
\begin{array}{ll}
g \in G \text{ representative of a $\gamma$-stable conjugacy class of odd order},\\
x_i \text{ a class representative of an outer class of $2'$-elements in } C_{\tilde{X}}(g,1) \end{array}\right. \right\}.$$
With the notation of \cite{simpsonframe1973slsucharacters}, the $\gamma$-stable conjugacy classes of $X$ are
$C_1$, $C_2$, $C_3^{(0)}$, $C_5',$ $C_6^{(k,l,m)},$ and $C_7^{(rk)}.$
For an element $x \in X$ in a $\gamma$-stable conjugacy class, we have $2|C_{X}(x)|=|C_{\tilde{X}}(x,1)|$.
Thus, the centralizer orders in $\tilde{X}$ for the elements of odd order in $C_5', C_6^{(k,l,m)}, C_7^{(rk)}$ can be deduced from \cite[Table 2]{simpsonframe1973slsucharacters}.
Since the Sylow $2$-subgroups of the centralizers have order $2$, the $2$-elements of the centralizers are all conjugate. Thus, the $\gamma$-stable classes $C_5', C_6^{(k,l,m)}, C_7^{(rk)}$ correspond to one outer conjugacy class with representative $c_5'(1,\gamma), c_6^{(k,l,m)}(1,\gamma), c_7^{(rk)}(1,\gamma)$ each where we denote a representative of $C_5'$ by $c_5'$ and so on.

The number of outer conjugacy classes of $\tilde{X}$ is the number of $\gamma$-stable conjugacy classes of $X$. Therefore, there are $3$ conjugacy classes corresponding to the trivial class $C_1$. They consist of $2$-elements and will be denoted by $(1,\gamma), A, B$. 

Let $\hat{\rho}_2$ be an extension of $\rho_2$ to $\tilde{X}$. Then, the other extension of $\rho_2$ to $\tilde{X}$ is given by $\sign  \cdot \hat{\rho}_2$ where $\sign$ denotes the nontrivial extension of the trivial character of $X$ to $\tilde{X}$. The elements $c_6^{(k,l,m)}(1,\gamma)$ and $ c_7^{(rk)}(1,\gamma)$ are algebraically conjugate to an odd number of elements in classes of the same type with different parameters. Since we only have two extensions of $\rho_2$, these vanish on all $c_6^{(k,l,m)}(1,\gamma), c_7^{(rk)}(1,\gamma)$ and obviously also on $c_5'(1,\gamma)$.

Now, $(1, \gamma)$ is a rational class and since the extensions of $\rho_2$ have to be different on at least one class, $A$ and $B$ cannot be rational and it follows $B=A^{-1}$.

We now determine $|C_{\tilde{X}}(a)|$ for $a \in A$. From \cite[Lemma 3.1 2.]{brunat2007extensiong2graph} we know $C_{\tilde{X}}(c_i (1,\gamma))=C_{\tilde{X}}(c_i ) \cap C_{\tilde{X}}( (1,\gamma)).$
We have $ C_{\tilde{X}}( (1,\gamma)) \cong \PSL_2(q) \times \langle \gamma \rangle$. Since $a \in A$ has order $4$ or $8$, we have $a^2 \in C_2$ or $a^4 \in C_2$ and it follows with  \cite[Table 2]{simpsonframe1973slsucharacters} that $|C_{\tilde{X}}(a)|$ divides $2|C_{{X}}(c_2)|=2q^2\frac{q+1}{3}$. We know that half of the elements of $\tilde{X}$ lie in an outer class, i.e. 
\begin{align*}
|X|&=\frac{\tilde{X}}{|C_{\tilde{X}}( (1,\gamma))|}+ \frac{\tilde{X}}{|C_{\tilde{X}}( c_5'(1,\gamma))|} + \sum_{k,l,m}\frac{\tilde{X}}{|C_{\tilde{X}}(c_6^{(k,l,m)} (1,\gamma))|}+ \sum_{k}\frac{\tilde{X}}{|C_{\tilde{X}}( c_7^{(rk)}(1,\gamma))|}+ 2\cdot \frac{\tilde{X}}{|C_{\tilde{X}}( a)|} \\
&=\frac{1}{3} q^2(q+1)^2(q-1)(q^2-q+1).
\end{align*}
Looking at the centralizer orders in \cite[Table 2]{simpsonframe1973slsucharacters} we see that $q$ divides all summands except possibly the last one. It follows that $|C_{\tilde{X}}(a)|=4q d$ with $d$ odd. We can now use orthogonality relations and the knowledge that it is a sum of $8$-th roots of unity to determine $\hat{\rho}_2(a)$. It follows that the elements of $A$ have order $8$, $|C_{\tilde{X}}(a)|=4q$ and $$\hat{\rho}_2(a)=\sqrt{-q}=\sqrt{q/2}(\xi_8+\xi_8^3).$$
Since $q$ is an odd power of $2$, $\sqrt{q/2}$ is an integer and we easily see that $\hat{\rho}_2(a)$ is $\mathcal{H}$-invariant. Thus, $\hat{\rho}_2$ is an $\mathcal{H}$-invariant extension from $\rho_2$ to $X \rtimes \langle \gamma \rangle $.

By Lemma \ref{lemmaoddorderreal} and the results of Lusztig as stated in \cite[Proposition 2.1]{malle2008extuni}, we find a unique $\mathcal{H}$-invariant extension of $\rho_2$ to $\tilde{\rho}_2 \in \Irr(X \rtimes \langle \delta, \psi^2 \rangle)$. We know from \cite[Lemma 3.11]{malle2008extuni} that $\rho_2$ extends to all outer automorphism. Thus, we find an extension $\accentset{\approx}{\rho}_2$ extending both $\tilde{\rho}_2$ and $\hat{\rho}_2$. 
Since ${\accentset{\approx}{\rho}_2}^ \sigma = \beta \accentset{\approx}{\rho}_2$ for some $\beta \in \Irr(D)$ implies $\beta|_{\langle \delta, \psi^2 \rangle}=1$ and $\beta|_{\langle \gamma \rangle}=1$, we have found an $\mathcal{H}$-invariant extension of $\rho_2$ to $X \rtimes D$.
\end{proof}
Now we turn to the local characters. As in the proof of \cite[Theorem 3.12]{malle2008extuni}, there is a natural embedding $\SU_3(q) \rightarrow \SL_3(q^2)$ that maps the normalizers of Sylow $3$-subgroups $N \cong 3^{1+2}.Q_8$ onto another where $Q_8$ is the quaternion group. Since every automorphism of $\SU_3(q)$ naturally extends to $\SL_3(q^2)$, it suffices to study the characters and character extensions for $N$ as a subgroup of $\SL_3(q)$. 
We know from \cite[Table 2]{malle2008extuni} that all six $3'$-characters $\rho_1',$ $\rho_2',$ $\rho_3'$, $ \varphi_1',$ $ \varphi_2',$ $ \varphi_3'$ of $N$ are rational and $N$ is $D$-stable. Thus, we have $\Gamma \cong \langle \Inn(N), \psi, \gamma,  \delta \rangle_\tau \cong \hat{H} \times \langle \psi \rangle$ in the inductive condition.
\begin{lemma} \label{lemmaSLSUecxtlocal}
Every $\tau \in \Irr_{3'}(N) $ has an $\mathcal{H}_\tau$-invariant extension to $N\rtimes D_\tau$.
\end{lemma}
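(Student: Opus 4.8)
The six characters $\rho_i',\varphi_i'$ are rational by \cite[Table 2]{malle2008extuni}, so $\mathcal{H}_\tau=\mathcal{H}$ and it is enough to produce, for each $\tau$, an $\mathcal{H}$-invariant extension to $N\rtimes D_\tau$; extending this canonically to the inner automorphisms in $\Gamma_\tau$ as in Remark \ref{remextension}(a) then gives the $(\Gamma\times\mathcal{H})_\tau$-invariant extension demanded by \cite[Definition 3.5]{navarro2019reduction}. I would first record the relevant structure of $N$: it has normal Sylow $3$-subgroup $P=O_3(N)\cong 3^{1+2}$ with $N/P\cong Q_8$, the centre $Z(S)=Z(P)$ lies in the kernel of all six characters, and $D\cong S_3\times C_f$, with $f$ odd when $S=\SU_3(q)$. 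Concretely the six characters are the trivial character, three non-trivial linear characters (inflated from $Q_8/Z(Q_8)\cong C_2^2$ via $N\twoheadrightarrow Q_8$ and cyclically permuted by the diagonal automorphism $\delta$), the character $\mu$ of degree $2$ inflated from $Q_8$, and the character $\theta$ of degree $8$ induced from a non-trivial linear character of $P/Z(P)\cong C_3^2$.

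The four linear characters extend trivially to $N\rtimes D_\tau$ by Remark \ref{remextension}(\ref{remlinear}), and these extensions are $(\Gamma\times\mathcal{H})_\tau$-invariant with no further work. For $\theta=\Ind_P^N(\lambda)$ I would use that $P\trianglelefteq N\rtimes D_\theta$ and that $N/P$ permutes the $N$-orbit of $\lambda$ regularly: a point-stabiliser argument lets me replace $\lambda$ inside its $N$-orbit by a $D_\theta$-invariant linear character without changing $\theta$, after which $\Ind_{P\rtimes D_\theta}^{N\rtimes D_\theta}$ of the trivial extension of $\lambda$ is irreducible of degree $8$, hence an extension of $\theta$, and a short check (which one can run in \textsf{GAP}) shows it is rational, so $\mathcal{H}$-invariant.

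The delicate case, which I expect to be the main obstacle, is $\mu$. Being the unique irreducible character of $N$ of degree $2$ it satisfies $D_\mu=D$; and since $\delta$ permutes the three cyclic subgroups of order $4$ of $Q_8$, no index-$2$ subgroup $U$ of $N$ with $\mu=\Ind_U^N(\cdot)$ is $D$-stable, while $\mu$ is not multiplicity-free induced from any $D$-stable subgroup of $N$, so the argument for $\theta$ does not apply. Instead, the equality $\mu|_P=\mu(1)\cdot 1_P$ forces $P$ into the kernel of every extension of $\mu$, so extensions of $\mu$ to $N\rtimes D$ correspond to the characters of $(N\rtimes D)/P\cong Q_8\rtimes D$ restricting to the unique $2$-dimensional character of $Q_8$. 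Such characters exist — $\mu$ already extends through $\GL_2(3)\cong Q_8\rtimes S_3$ — and the point is to choose one whose values lie in the subfield of $\mathbb{Q}^{\ab}$ fixed by $\mathcal{H}$. This is where the prime $p=3$ enters decisively: over $\GL_2(3)$ the non-rational values of such an extension lie in $\mathbb{Q}(\sqrt{-2})$, and $\sqrt{-2}=\zeta_8+\zeta_8^3$ is fixed by $\mathcal{H}$ for $p=3$ (the relevant Galois automorphisms send $\zeta_8\mapsto\zeta_8^{3^k}$, which fixes $\zeta_8+\zeta_8^3$), whereas over any cyclic order-$3$ subquotient of $Q_8\rtimes D$ one picks the rational, integer-valued extension as in the $\SL_2(3)$-case, and the cyclic field-automorphism part contributes nothing new. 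I would make this choice explicit by realising $N$ inside $\SL_3(q)$ — inside $\SL_3(q^2)$ via the embedding $\SU_3(q)\hookrightarrow\SL_3(q^2)$ fixed above, in the unitary case — and computing the action of $D$ on $Q_8$ together with the relevant character values in \textsf{GAP} \cite{GAP4}, in the spirit of the treatment of $\chi_6,\chi_7$ of $\SU_3(2).2$ in Proposition \ref{propspecialg2f4}. Everything outside this one computation is a routine application of Remark \ref{remextension} and Lemma \ref{lemmaoddorderreal}.
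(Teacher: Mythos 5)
Your proposal is correct and follows essentially the same route as the paper, which likewise reduces to the two non-linear characters, takes the extensions computed explicitly in Malle's work (the degree-$8$ character extending rationally, the degree-$2$ character extending with irrationality $\sqrt{-2}$), and observes that these values are $\mathcal{H}$-invariant for $p=3$. Your explicit verification that $\sqrt{-2}=\zeta_8+\zeta_8^3$ is fixed by $\zeta_8\mapsto\zeta_8^{3^k}$ is exactly the point the paper leaves implicit in the phrase ``non-rational but $\mathcal{H}$-invariant''.
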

\begin{proof}
We only have to consider the non-linear characters. We already know from \cite[Proof of Lemma 3.7]{malle2008extuni} that $\rho_2'$ and $\rho_3'$ both extend to $\hat{H} \times \langle \psi \rangle$ since we can compute the irreducible characters of $\hat{H}$ explicitly and extend them trivially to the direct product. Further, we see in \cite[Table 2]{malle2008extuni} that the extension of $\rho_3'$ is rational and looking at the character values of the extensions of $\rho_2'$ shows that they are non-rational but $\mathcal{H}$-invariant. This shows the claim.
\end{proof}

\begin{theorem}
The  groups 
 $\PSL_3(q) $ for $q \equiv 4,7 \mod 9$   
and $\PSU_3(q) $ for $q \neq 2$, $ q \equiv 2,5 \mod 9$
satisfy the inductive McKay--Navarro condition for $p=3$.
\end{theorem}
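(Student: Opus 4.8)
The plan is to upgrade Malle's verification of the inductive McKay condition in \cite[Section 3.1 and 3.2]{malle2008extuni} to the inductive McKay--Navarro condition of \cite[Definition 3.5]{navarro2019reduction}, feeding in the Galois information collected in Lemmas \ref{lemmaSLSUHinv}--\ref{lemmaSLSUecxtlocal}. First I would set aside the two groups that are treated separately: $\PSU_3(2)$ is solvable (and excluded), and $\PSL_3(4)$ has exceptional Schur multiplier and is handled in Proposition \ref{propsmall}. For every remaining group in the statement the Schur multiplier of $X$ has order $3$, so by \cite[Lemma 5.1]{johansson2020} it is enough to check the condition for $X$ itself rather than its universal cover. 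I would therefore work inside $S=\SL_3(q)$, resp.\ $S=\SU_3(q)$, identifying $\Irr(X)$ with the set of $\chi\in\Irr(S)$ having $Z(S)$ in their kernel, and take $N\cong 3^{1+2}.Q_8$ to be the Sylow $3$-normalizer of $S$ (using the embedding $\SU_3(q)\hookrightarrow\SL_3(q^2)$ in the unitary case, as above).

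Next I would record the equivariant bijection. By \cite[Section 3.1 and 3.2]{malle2008extuni} there is a bijection $\Omega\colon\Irr_{3'}(X)\to\Irr_{3'}(N)$ equivariant under the relevant automorphism group $\Gamma$. The global $3'$-characters are exactly $\rho_1,\rho_2,\rho_3,\varphi_1,\varphi_2,\varphi_3$, which are $\mathcal H$-invariant by Lemma \ref{lemmaSLSUHinv}, while the local $3'$-characters $\rho_1',\rho_2',\rho_3',\varphi_1',\varphi_2',\varphi_3'$ are rational by \cite[Table 2]{malle2008extuni} and hence $\mathcal H$-fixed. Thus $\mathcal H$ acts trivially on both $\Irr_{3'}(X)$ and $\Irr_{3'}(N)$, so any $\Gamma$-equivariant bijection is automatically $\Gamma\times\mathcal H$-equivariant; this supplies the equivariant bijection required by \cite[Definition 3.5]{navarro2019reduction}.

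It then remains to verify the character-triple part of the condition with the $\mathcal H$-action included. Malle has already dealt with the purely automorphism-theoretic side for the inductive McKay condition, so I would only adjoin the Galois data: Lemma \ref{lemmaSLHextglob} (for $\PSL_3(q)$) and Lemma \ref{lemmaSUHextglob} (for $\PSU_3(q)$) give, for each $\tau\in\Irr_{3'}(X)$, an $\mathcal H_\tau$-invariant extension to $X\rtimes D_\tau$, and Lemma \ref{lemmaSLSUecxtlocal} gives $\mathcal H_\tau$-invariant extensions of the local characters to $N\rtimes D_\tau$. After extending these canonically to the inner automorphisms and restricting to $X\rtimes\Gamma_\tau$, resp.\ $N\rtimes\Gamma_{\Omega(\tau)}$, the projective representations attached to $\tau$ and to $\Omega(\tau)$ may be chosen to be ordinary representations in an $\mathcal H$-stable way; the associated cohomology classes are then trivial, so the two character triples become isomorphic over the relevant overgroups in a manner compatible with the $\mathcal H$-action. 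Together with $\Omega$ this yields the inductive McKay--Navarro condition for $X$ and $p=3$, along the lines of the proof of \cite[Proposition 2.3]{spaeth2012mckaydefining}.

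The step I expect to be the main obstacle is the bookkeeping around the non-cyclic outer automorphism group $D\cong S_3\times C_f$: one must match Malle's descriptions of $\Gamma$ and of the stabilizers $\Gamma_\tau$ and $\Gamma_{\Omega(\tau)}$ inside $D$, make sure the genuine extensions produced by Lemmas \ref{lemmaSLHextglob}--\ref{lemmaSLSUecxtlocal} restrict compatibly to the subgroups occurring in \cite[Definition 3.5]{navarro2019reduction}, and confirm that the cohomology-triviality argument applies uniformly across all the $3'$-characters and both families of groups.
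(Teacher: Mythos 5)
Your proposal follows essentially the same route as the paper's own proof: reduce to $\SL_3(q)$/$\SU_3(q)$ via the Schur multiplier argument, observe that trivial $\mathcal H$-action on both character sets (Lemma \ref{lemmaSLSUHinv} and rationality of the local characters) upgrades Malle's bijection to a $\Gamma\times\mathcal H$-equivariant one, and then invoke Lemmas \ref{lemmaSLHextglob}, \ref{lemmaSUHextglob} and \ref{lemmaSLSUecxtlocal} for the invariant extensions, extended to the inner automorphisms. The paper phrases the final character-triple step simply by noting the resulting extensions have $C_{G\rtimes\Gamma}(N)$ in their kernel, which is the same point your cohomology-triviality remark is making.
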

\begin{proof}
The group $\PSL_3(4)$ is considered in Proposition \ref{propsmall}.
As described before, $\Irr_{3'}(\PSL_3(q))$ is in bijection with $\Irr_{3'}(\SL_3(q))$ via inflation. We verify the condition for the local subgroup $N$. Since $\mathcal{H}$ acts trivially on the local and global characters by \ref{lemmaSLSUHinv}, the bijection from \cite[Proposition 3.8]{malle2008extuni} is also $\mathcal{H}$-equivariant and for all $\tau \in \Irr_{3'}(\SL_3(q))$ we have $(\Gamma \times \mathcal{H})_\tau=\Gamma_\tau \times \mathcal{H}$. Thus, it suffices to extend the $\mathcal{H}$-invariant character extensions in $\Irr(G\rtimes D_\tau)$ and $\Irr(N\rtimes D_\tau)$ from Lemma \ref{lemmaSLHextglob} and \ref{lemmaSLSUecxtlocal} to the remaining inner automorphisms in $\Gamma$. The resulting $\mathcal{H}$-invariant characters all have $C_{G \rtimes \Gamma}(N)$ in their kernel. Thus, the inductive McKay--Navarro condition in \cite[Definition 3.5]{navarro2019reduction} is satisfied.
We can argue analogously for $ \PSU_3(q)$.
\end{proof}
\subsection{The groups $\mathsf{G_2(q)}$} Let $G:=\mathsf{G_2(q)}$ for $q=2,4,5,7 \mod 9$ and $p=3$. We assume $q \neq 2$ since the group $\mathsf{G}_2(2)$ is considered in Proposition \ref{propsmall}. Again, we follow \cite[Section 3.3]{malle2008extuni}. The subgroup
$$M:=\left\{ \begin{array}{ll}
\SL_3(q).2 \text{ (extension with the graph automorphism)} & \text{if } q \equiv 4,7 \mod 9,\\
\SU_3(q).2 \text{ (extension with the graph-field automorphism)} & \text{if } q \equiv 2,5 \mod 9
\end{array}\right.$$
contains the normalizer of a Sylow $3$-subgroup of $G$. The outer automorphism group of $G$ is generated by a field automorphism $\psi$ and we can assume that $M$ is $\psi$-stable. 

We  know from \cite[Section 3.3]{malle2008extuni} that there are each nine global and local $3'$-characters that are all $\psi$-invariant.
\begin{lemma}
There is a $\Gamma \times \mathcal{H}$-equivariant bijection $\Irr_{3'}(G) \rightarrow \Irr_{3'}(M)$.
\end{lemma}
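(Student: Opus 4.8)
The plan is to obtain the required bijection by adjusting the $\Gamma$-equivariant bijection supplied by \cite[Section 3.3]{malle2008extuni}. I would first recall that in that reference one has $\Gamma=\langle\psi\rangle$ (together with the inner automorphisms) and that $\Gamma$ fixes each of the nine $3'$-characters of $G$ and each of the nine $3'$-characters of $M$. It therefore suffices to show that the permutations induced by $\mathcal{H}$ on $\Irr_{3'}(G)$ and on $\Irr_{3'}(M)$ agree under a suitable identification, for then the Malle bijection can be modified within $\mathcal{H}$-orbits so as to become $\Gamma\times\mathcal{H}$-equivariant. In particular, if $\mathcal{H}$ fixes all eighteen of these characters, which I expect to be the case in analogy with Lemma \ref{lemmaSLSUHinv}, then any $\Gamma$-equivariant bijection already works.

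For the global characters I would read $\Irr_{3'}(G)$ off the generic character table of $\mathsf{G}_2(q)$ in \cite{chevie}. The unipotent characters among them have pairwise distinct degrees, hence are real and $\mathcal{H}$-invariant. One then checks that every remaining character lies in a Lusztig series $\mathcal{E}(G,s)$ with $s$ of order $3$ and $C_{\mathbf{G}}(s)$ of type $\mathsf{A}_2$ or ${}^2\mathsf{A}_2$ (the residue of $q$ deciding which), which is exactly the centralizer producing $M$; other centralizer types force the $3$-part of the index to be nontrivial. Using the Galois-equivariant Jordan decomposition (\cite[Proposition 3.3.16]{geckmalle2020}, applied as in the proof of Proposition \ref{propjordan}), for $\sigma\in\mathcal{H}$ with associated exponent $b$ one has $\mathcal{E}(G,s)^\sigma=\mathcal{E}(G,s^b)$; since $-1$ lies in the Weyl group of $\mathsf{G}_2$, the element $s^b\in\{s,s^{-1}\}$ is $G$-conjugate to $s$, so the series is $\mathcal{H}$-stable. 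Inside it the characters are the $\chi_{s,\nu}$ with $\nu$ a unipotent character of $C_{\mathbf{G}^F}(s)\cong\SL_3(q)$ resp. $\SU_3(q)$; as the three such $\nu$ have pairwise distinct degrees, the $\chi_{s,\nu}$ are determined by their degrees and hence fixed by $\mathcal{H}$. Thus $\mathcal{H}$ acts trivially on $\Irr_{3'}(G)$.

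For the local characters, with $M=\SL_3(q).2$ or $M=\SU_3(q).2$, I would use the embedding $\SU_3(q)\hookrightarrow\SL_3(q^2)$ of the preceding subsection to reduce the unitary case to the linear one, and then determine the $\mathcal{H}$-action on the nine $3'$-characters of $M$ from their explicit description and character values in \cite[Section 3.3]{malle2008extuni}, the values of the underlying characters of $\SL_3(q)$ resp. $\SU_3(q)$ being available from \cite{simpsonframe1973slsucharacters}; I expect to find that all of them are $\mathcal{H}$-invariant as well. This would show that any $\Gamma$-equivariant bijection $\Irr_{3'}(G)\to\Irr_{3'}(M)$ is already $\Gamma\times\mathcal{H}$-equivariant, proving the lemma. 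The step I expect to be the main obstacle is pinning down the $\mathcal{H}$-action on the non-unipotent global characters and on the local characters of the extension $M$ by the graph- (resp. graph-field-) automorphism: both are finite verifications, but they require making the relevant character values explicit and, should any non-rational character occur, checking that the resulting transpositions on the two sides correspond.
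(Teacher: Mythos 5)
Your overall strategy coincides with the paper's: show that $\mathcal{H}$ acts trivially on both $\Irr_{3'}(G)$ and $\Irr_{3'}(M)$, so that the $\Gamma$-equivariant bijection of Malle is automatically $\Gamma\times\mathcal{H}$-equivariant. On the global side you take a genuinely different route: the paper simply reads the character values off the tables of Chang--Ree and Enomoto--Yamada, whereas you argue structurally via the Galois-equivariant Jordan decomposition, the $G$-conjugacy of $s$ and $s^{-1}$ (using $-1\in W(\mathsf{G}_2)$), and the distinctness of unipotent degrees in the relevant series. That argument is sound and arguably more illuminating, though it requires you to actually verify that every non-unipotent $3'$-character sits in a series with centralizer of type $\mathsf{A}_2$ or ${}^2\mathsf{A}_2$ and that the unipotent $3'$-characters have pairwise distinct degrees (true here because the equal-degree pair $\mathsf{G}_2[\theta],\mathsf{G}_2[\theta^2]$ is excluded by the $3'$-condition).

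The local side is where your proposal stops short of a proof. The nine characters of $M=M'.2$ are $\Ind_{M'}^{M}(\varphi_2)$ together with the \emph{two} extensions of each of $\rho_1,\rho_2,\rho_3,\varphi_1$, and the $\mathcal{H}$-action on an extension is not determined by the values of the underlying character of $M'$: the extension $\hat{\rho}_2$ of $\rho_2$ to $\SU_3(q).2$ takes the irrational value $\sqrt{-q}$ on outer classes, so the invariance you ``expect'' is exactly the nontrivial point, and if it failed the two permutation actions would not match and no modification within $\mathcal{H}$-orbits could repair the bijection. The paper closes this by reusing Lemmas \ref{lemmaSLHextglob} and \ref{lemmaSUHextglob}: one extension of each $\rho_i$ and of $\varphi_1$ has already been shown $\mathcal{H}$-invariant there (via the Gauss-sum computation for $\sqrt{-q}$), the other extension differs from it by the rational sign character of $M/M'$ and is therefore also $\mathcal{H}$-invariant, and $\Ind_{M'}^{M}(\varphi_2)$ is the unique character of its degree. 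You should invoke those lemmas rather than redo the value computation; note also that the embedding $\SU_3(q)\hookrightarrow\SL_3(q^2)$ you propose was used for the Sylow normalizer $N$ inside $M'$, and does not directly transport the extension of $\SU_3(q)$ by the graph-field automorphism, so it is not the right reduction here.
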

\begin{proof}
Looking at the character values of the global characters in \cite{changree1974charg2} and \cite{enomotoyamada1986charg2} we see that $\mathcal{H}$ acts trivially on them. 
We already obtained the irreducible local characters of $M'$ (that is $\SL_3(q)$ or $\SU_3(q)$) in the previous section. The irreducible characters of $M$ are $\Ind_{M'}^M(\varphi_2)$ and the extensions of $\rho_1$, $\rho_2$, $\rho_3$, and $\varphi_1$ to $M$. We know that one of the extensions is $\mathcal{H}$-invariant. Thus, the other extension also has to be $\mathcal{H}$-invariant. Since $\Ind_{M'}^M(\varphi_2)$ is the only irreducible character of its degree, it is also $\mathcal{H}$-invariant.
As described in \cite[Proposition 3.14]{malle2008extuni}, $\Gamma$ acts trivially on all occurring characters.
\end{proof}

\begin{lemma}
The irreducible $3'$-characters of $M$ have $\mathcal{H}$-invariant extensions to $M \rtimes \langle \psi \rangle$.
\end{lemma}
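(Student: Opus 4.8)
The plan is to bootstrap from the two global lemmas already proved, using that every character in $\Irr_{3'}(M)$ is built out of characters of $M'$. By the previous lemma, $\Irr_{3'}(M)$ consists of $\Ind_{M'}^M(\varphi_2)$ together with the two extensions to $M$ of each of $\rho_1,\rho_2,\rho_3,\varphi_1$, and all nine of these characters are $\psi$-invariant. Write $\hat M:=M\rtimes\langle\psi\rangle$. Since $\psi$ restricts to $M'$ as a power of the standard field endomorphism (composed with the graph automorphism in the unitary case) and commutes with the involution $\gamma$ realising $M=M'.2$, we have $\hat M\cong M'\rtimes\langle\gamma,\psi\rangle$, and $M'\rtimes\langle\psi\rangle\trianglelefteq\hat M$ has index $2$. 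The characters $\rho_1,\rho_2,\rho_3,\varphi_1,\varphi_2$ all have $Z(M')$ in their kernel, because in Section 3.1 and 3.2 they are treated as characters of $X:=M'/Z(M')$, which is $\PSL_3(q)$ or $\PSU_3(q)$; hence an $\mathcal{H}$-invariant extension of one of them to a subgroup of $X\rtimes\Out(X)$ inflates to an $\mathcal{H}$-invariant extension of the corresponding character of $M'$, and $\hat M\twoheadrightarrow X\rtimes\langle\gamma,\psi\rangle$.

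For the eight characters coming from $\mu\in\{\rho_1,\rho_2,\rho_3,\varphi_1\}$: Lemmas \ref{lemmaSLHextglob} and \ref{lemmaSUHextglob} provide an $\mathcal{H}_\mu$-invariant extension of $\mu$ to $X\rtimes D_\mu$, where $D_\mu$ contains $\langle\gamma,\psi\rangle$. First I would restrict this extension to $X\rtimes\langle\gamma,\psi\rangle$ and inflate it along $\hat M\twoheadrightarrow X\rtimes\langle\gamma,\psi\rangle$, obtaining an $\mathcal{H}$-invariant character $\widehat\mu$ of $\hat M$ whose restriction to $M$ is one of the two extensions of $\mu$; then, multiplying $\widehat\mu$ by the (rational) linear character of $\hat M$ with kernel $M'\rtimes\langle\psi\rangle$ gives an $\mathcal{H}$-invariant extension of the other one. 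This settles these eight characters.

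For $\Ind_{M'}^M(\varphi_2)$: the involution $\gamma$ interchanges $\varphi_2$ and $\varphi_3$ while $\psi$ fixes both, so the inertia group of $\varphi_2$ in $\hat M$ is exactly $M'\rtimes\langle\psi\rangle$. Applying Lemmas \ref{lemmaSLHextglob} and \ref{lemmaSUHextglob} to $\varphi_2$ and restricting to $M'\rtimes\langle\psi\rangle$ yields an $\mathcal{H}$-invariant extension $\widehat{\varphi_2}$ of $\varphi_2$ to $M'\rtimes\langle\psi\rangle$. As in Remark \ref{remextension}(\ref{reminduction}), $\Ind_{M'\rtimes\langle\psi\rangle}^{\hat M}(\widehat{\varphi_2})$ is then irreducible (its degree equals $(\Ind_{M'}^M\varphi_2)(1)$ because $\Ind_{M'}^M(\varphi_2)$ is the unique irreducible character of $M$ of that degree), restricts to $\Ind_{M'}^M(\varphi_2)$ on $M$ by the Mackey formula, and is $\mathcal{H}$-invariant since $\widehat{\varphi_2}$ is; this is the required extension. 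In the unitary case $q\equiv 2,5\bmod 9$ the residue forces $q$ to be a non-square, so $f$ and hence $[\hat M:M]$ are odd, and for those characters of $M$ that one reads off as real from the explicit values (as in Lemmas \ref{lemmaSUHextglob} and \ref{lemmaSLSUecxtlocal}) one may instead quote Lemma \ref{lemmaoddorderreal} directly; this is a convenient shortcut but not logically necessary, since the middle unipotent character has a non-rational extension value and must anyway be handled by the reduction above.

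I expect the only genuine difficulty to be bookkeeping rather than new ideas: one must identify, inside $\Out(M')$, precisely which involution realises the extension $M=M'.2$ arising from $M\hookrightarrow\mathsf{G}_2(q)$ (the graph automorphism for $q\equiv 4,7\bmod 9$, the graph--field automorphism for $q\equiv 2,5\bmod 9$), check that it agrees, up to conjugacy, with the involution $\gamma$ for which Lemmas \ref{lemmaSLHextglob} and \ref{lemmaSUHextglob} were set up, and confirm that $\varphi_1$ and $\varphi_2$ really extend to $M'\rtimes\langle\psi\rangle$ rather than only to some other index-$2$ overgroup inside $M'\rtimes\Out(M')$. This is a finite check carried out with the explicit data of Malle's paper \cite{malle2008extuni} and, where convenient, \cite{GAP4}.
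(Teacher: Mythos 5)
Your proposal follows essentially the same route as the paper: the eight characters extending $\rho_1,\rho_2,\rho_3,\varphi_1$ are handled by transporting (and, for the second extension, twisting by the sign character) the $\mathcal{H}$-invariant extensions from Lemmas \ref{lemmaSLHextglob} and \ref{lemmaSUHextglob}, and $\Ind_{M'}^M(\varphi_2)$ is handled by extending $\varphi_2$ $\mathcal{H}$-invariantly to its inertia group in $M\rtimes\langle\psi\rangle$ and inducing. The only caveat --- which you yourself flag as a point to check --- is that this inertia group is not always $M'\rtimes\langle\psi\rangle$: the paper records it as $\SL_3(q)\rtimes\langle\psi\rangle$, $\SL_3(q)\rtimes\langle\psi\gamma\rangle$, or $\SU_3(q)\rtimes\langle\psi^2\rangle$ according to the case, but the induction argument is unaffected by which index-two subgroup actually occurs.
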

\begin{proof}
For the extensions of $\rho_1$, $\rho_2$, $\rho_3$, and $\varphi_1$ this is clear from the previous section. We also know that there is an $\mathcal{H}$-invariant extension of $\varphi_2$ to $\SL_3(q) \rtimes \langle \psi \rangle,$ $\SL_3(q) \rtimes \langle \psi \gamma \rangle,$ and $\SU_3(q) \rtimes \langle \psi^2 \rangle$, respectively. Inducing this to $M \rtimes  \langle \psi \rangle$ leads to an $\mathcal{H}$-invariant extension of $\Ind_{M'}^M(\varphi_2)$.
\end{proof}

\begin{lemma}
The irreducible $3'$- characters of $G$ have $\mathcal{H}$-invariant extensions to $G \rtimes \langle \psi \rangle$.
\end{lemma}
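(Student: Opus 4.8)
The plan is to produce, for each of the nine characters $\chi\in\Irr_{3'}(G)$, an extension to $G\rtimes\langle\psi\rangle$ fixed by $\mathcal{H}$, by re-running the constructions used for the generic global characters in Section~4, with the field endomorphism $F_l$ (which here generates $\langle\psi\rangle$) playing the role of the graph endomorphism $\gamma$ there. First one records the reduction: by the two preceding lemmas the nine characters in $\Irr_{3'}(G)$ are all invariant under $\Gamma=\langle\Inn(G),\psi\rangle$ and under $\mathcal{H}$; since $\langle\psi\rangle$ is cyclic each of them extends to $G\rtimes\langle\psi\rangle$, and because conjugation by $\psi$ is inner in $G\rtimes\langle\psi\rangle$ while the canonical extension to inner automorphisms from Remark~\ref{remextension}(a) preserves $\mathcal{H}$-invariance, it suffices to exhibit for each $\chi$ one $\mathcal{H}$-fixed extension $\hat\chi\in\Irr(G\rtimes\langle\psi\rangle)$; the $(\Gamma\times\mathcal{H})_\chi$-invariant extension to $G\rtimes\Gamma_\chi$ required by Condition~\ref{imck}(2) then follows by extending $\hat\chi$ canonically to the remaining inner automorphisms.

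Six of the nine characters are unipotent; for $1_G$ and the Steinberg character the extension is immediate (Remark~\ref{remextension}(b), resp. Lemma~\ref{lemmaoddorderreal} when $f$ is odd), while for the other four, which lie in the big Lusztig family of $\mathsf{G}_2$, one repeats the proof of Proposition~\ref{propextunip} with $k=f$ and $F_l$ in place of $\gamma$: for each such $\chi$ choose, from Lusztig's tables \cite[p.373--376]{lusztig1984charofreductivegroups}, an $F$-stable maximal torus $\mathbf{T}$ of $\mathbf{G}=\mathsf{G}_2$ which can be made $\psi$-stable after a conjugation and for which $\chi$ is a constituent of $R^{\mathbf{G}}_{\mathbf{T}}(1_{\mathbf{T}^F})$ with all constituents of multiplicity $\pm1$; then pass to $\underline{\mathbf{G}}\rtimes\langle\tau\rangle$, form $\hat{\underline\eta}=R^{\underline{\mathbf{G}}\rtimes\langle\tau\rangle}_{\underline{\mathbf{T}}\rtimes\langle\tau\rangle}(1)$, and single out the unique constituent extending the character attached to $\chi$; this gives an $\mathcal{H}$-invariant extension of $\chi$ to $G\rtimes\langle\psi\rangle$, with no constraint on the parity of $f$.

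The three remaining $3'$-characters lie in a single Lusztig series $\mathcal{E}(G,s)$ with $s$ a semisimple $3$-element and $C_{\mathbf{G}}(s)^F\cong\SL_3(q)$ (when $q\equiv4,7\mod9$) or $\SU_3(q)$ (when $q\equiv2,5\mod9$); since $s\sim_{\mathbf{G}^F}s^{-1}$, $s$ is conjugate to every Galois twist $s^b$ with $\gcd(b,\exp(G))=1$ coming from $\mathcal{H}$, so this series is $\mathcal{H}$-stable and, by the $\mathcal{H}$-equivariant Jordan decomposition \cite[Theorem~5.1]{srinivasanvinroot2019}, all three of its characters are $\mathcal{H}$-fixed and real. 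They are the semisimple, the regular, and one ``middle'' character (the Jordan correspondents of the trivial, Steinberg, and middle unipotent characters of $C_{\mathbf{G}}(s)^F$). The regular one is handled by Proposition~\ref{propregularext} verbatim --- its proof uses only that $\mathbf{G}$ is semisimple simply connected of trivial centre and self-dual, that a Gelfand--Graev character is $\Aut(G)$-stable and multiplicity free, and that the relevant constituent occurs once --- with $\gamma$ replaced by $\psi$ and for any $f$. For the semisimple and the middle characters, Lemma~\ref{lemmaoddorderreal} applies directly whenever $f$ is odd; when $f$ is even one realises the character as a multiplicity-$\pm1$ constituent of a suitable Deligne--Lusztig character $R^{\mathbf{G}}_{\mathbf{T}}(\hat s)$ and runs the construction of Proposition~\ref{propextunip} with $\hat s$ in place of $1_{\mathbf{T}^F}$, the identity $R^{\mathbf{G}}_{\mathbf{T}}(\hat s)^\sigma=R^{\mathbf{G}}_{\mathbf{T}}(\widehat{s^b})=R^{\mathbf{G}}_{\mathbf{T}}(\hat s)$ keeping it $\mathcal{H}$-fixed, or --- in the sub-case where $\hat s$ is not $\psi$-stable --- expresses the character by Harish--Chandra induction from a proper Levi of $\mathbf{G}$ of type $\mathsf{A}_1$ and extends along the much simpler $\SL_2(q)$-datum.

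The main obstacle is the bookkeeping in the last two paragraphs: one has to verify, from the explicit character tables of $\mathsf{G}_2(q)$ in \cite{changree1974charg2} and \cite{enomotoyamada1986charg2} together with the known decompositions of its Deligne--Lusztig and Gelfand--Graev characters, that each of the four big-family unipotent characters and each of the semisimple and middle characters of $\mathcal{E}(G,s)$ genuinely occurs, with multiplicity $\pm1$, inside a Deligne--Lusztig character all of whose constituents have multiplicity $\pm1$, so that $\mathcal{H}$-invariance of the extension is forced by uniqueness. The genuinely delicate point is the case of even $f$ with $C_{\mathbf{G}}(s)^F\cong\SL_3(q)$, where Lemma~\ref{lemmaoddorderreal} is unavailable and $R^{\mathbf{G}}_{\mathbf{T}}(\hat s)$ need not itself be $\psi$-stable, so one must pass to a proper Levi as indicated and check that conjugation by $\psi$ there causes no interference.
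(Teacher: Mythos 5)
Your overall strategy (organize by Lusztig series, realize each character as a multiplicity-$\pm1$ constituent of an invariantly extendable induced or Deligne--Lusztig character, and fall back on Proposition~\ref{propregularext} and Lemma~\ref{lemmaoddorderreal}) is reasonable in outline, but the two places you flag as ``bookkeeping'' are exactly where the proof lives, and as written they are gaps. For the four big-family unipotent characters you assert the existence of a torus $\mathbf{T}$ such that $\chi$ occurs in $R_{\mathbf{T}}^{\mathbf{G}}(1)$ with multiplicity $\pm1$ and \emph{all} constituents have multiplicity $\pm1$; this is not automatic --- $\phi_{2,1}$ and $\phi_{2,2}$ occur with multiplicity $2$ in $\Ind_B^G(1)$ --- and you never identify which torus works for which character. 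The paper avoids this entirely: for odd $q$ all six unipotent $3'$-characters lie in the principal series (Chang--Ree), so one extends the permutation character $\Ind_{B\rtimes\langle\psi\rangle}^{G\rtimes\langle\psi\rangle}(1)$ and forces uniqueness of the extension even for the multiplicity-$2$ constituents exactly as in Lemma~\ref{lemmaSLHextglob}, while for even $q$ one reads off the decompositions of $\Ind_P^G(1)$ and $\Ind_Q^G(1)$ from Enomoto--Yamada. A smaller oversight: your case division leaves the Steinberg character uncovered when $f$ is even.

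The more serious gap is the non-unipotent series for even $f$, where Lemma~\ref{lemmaoddorderreal} is unavailable. Your fallback --- run Proposition~\ref{propextunip} with $\hat s$ in place of $1_{\mathbf{T}^F}$, or else induce from a type-$\mathsf{A}_1$ Levi --- is not yet an argument: replacing the trivial character by $\hat s$ requires choosing a suitable extension of $\hat s$ to $\underline{\mathbf{T}_1}^{\tau F_0}\rtimes\langle\tau\rangle$ and re-establishing that $\hat{\underline{\eta}}$ is $\mathcal{H}$-invariant (which in Proposition~\ref{propextunip} was free precisely because the character was trivial), and the semisimple and middle characters of $\mathcal{E}(G,s)$ do not lie in a Harish--Chandra series attached to an $\mathsf{A}_1$ Levi (for $q\equiv 1\bmod 3$ they belong to the series of the split pair $(\mathbf{T},\hat s)$, with the middle character again of multiplicity $2$). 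The paper sidesteps the parity of $f$ altogether: for odd $q$ it extends the single virtual character $R_{\mathbf{T}}^{\mathbf{G}}(\theta)=X_{31}+X_{32}-X_{33}$, whose three constituents have distinct degrees, by the method of Proposition~\ref{propextunip}; for even $q$ it uses that $\theta_6$ is semisimple, $\theta_7$ is regular, and $\theta_8$ is the unique multiplicity-one constituent of $\Ind_P^G(\chi_1((q-1)/3))$ for a linear character of the parabolic subgroup $P$, conjugating by a suitable $g\in G$ to make that linear character $\psi g$-stable. To salvage your route you would need to (i) exhibit from Lusztig's tables the tori for $\phi_{2,1},\phi_{2,2},\phi_{1,3}',\phi_{1,3}''$ and (ii) replace the even-$f$ fallback by a construction that genuinely carries the nontrivial character $\hat s$.
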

\begin{proof}
First assume $q$ odd. Then, all unipotent $3'$-characters are in the principal series of $G$ by \cite[p.402]{changree1974charg2}. They can be extended to $G \rtimes \langle \psi \rangle$ as described before in the proof of Lemma \ref{lemmaSLHextglob}.
In the notation of \cite{changree1974charg2}, the remaining three irreducible $3'$-characters are denoted by $X_{31},X_{32},X_{33}$. We know $R_{\mathbf{T}}^{\mathbf{G}}(\theta)=X_{31}+X_{32}-X_{33}$ for some $\theta \in \Irr(\mathbf{T}^F)$ and a torus $\mathbf{T}$ such that $\mathbf{T}^F= \mathfrak{H}_3$ if $q \equiv 1 \mod 3$ and $\mathbf{T}^F= \mathfrak{H}_6$ if $q \equiv -1 \mod 3$ in the notation of \cite{changree1974charg2}. Since we know that all characters are invariant under the actions of $\Gamma$ and $\mathcal{H}$, we can argue as in the proof of Proposition \ref{propextunip} to obtain $\mathcal{H}$-invariant character extensions.

Now let $q$ be even. The irreducible characters of $G$ are described in \cite{enomotoyamada1986charg2} and the irreducible $3'$-characters are denoted by $\theta_0, \theta_1, \theta_2, \theta_3,\theta_4, \theta_6, \theta_7, \theta_8$. Let $P, Q$ be subgroups of $G$ as defined there. The decompositions of some characters induced to $G$ are given in \cite[Table A]{enomotoyamada1986charg2} and we can read off
$$\Ind_P^G(1)=\theta_0+ \theta_1+ \theta_2 + \theta_3, \quad
\Ind_Q^G(1)=\theta_0+ \theta_1+ \theta_2 + \theta_4, \quad
\langle \Ind_P^G(\chi_1((q-1)/3)), \theta_8   \rangle=1$$
for some linear character $\chi_1((q-1)/3)$ of $P$ given in \cite[p.335]{enomotoyamada1986charg2}.
This already shows the claim for $\theta_0, \theta_1, \theta_2, \theta_3,$ and $\theta_4$. We also know that $\theta_6$ is semisimple and $ \theta_7$ is regular. Therefore they can be extended as described in Proposition \ref{propregularext} and \cite[Proposition 6.7]{ruhstorfer2017navarro}.

For the character $\chi_1((q-1)/3)$ of $P$ we have $\chi_1((q-1)/3)^{\psi} = \chi_1(2(q-1)/3)$ and we know from \cite[p.339]{enomotoyamada1986charg2} that both characters induce the same character of $G$. Thus, we find a $g \in G$ such that $\chi_1((q-1)/3)^{\psi g} = \chi_1((q-1)/3)$. Since it is linear, we find an $\mathcal{H}$-invariant extension of $\chi_1((q-1)/3)$ to $G \rtimes \langle \psi g \rangle$. It follows that $\theta_8$ also has an $\mathcal{H}$-invariant extension to $G \rtimes \langle \psi g \rangle$ that can be mapped to an $\mathcal{H}$-invariant character of $G \rtimes \langle \psi \rangle$ by conjugation with $g$. This shows the claim.
\end{proof}
The previous considerations directly give us the following result.
\begin{theorem}
The groups $\mathsf{G}_2(q)$ with $q \neq 2$, $q \equiv 2,4,5,7 \mod 9$ satisfy the inductive McKay--Navarro condition for $p=3$. 
\end{theorem}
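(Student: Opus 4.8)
The statement is essentially a bookkeeping corollary of the three preceding lemmas, so the plan is to assemble them into a verification of the inductive McKay--Navarro condition of \cite[Definition 3.5]{navarro2019reduction}, mirroring the argument already carried out for $\PSL_3(q)$ and $\PSU_3(q)$. First I would record the structural facts that make this possible: since $q \equiv 2,4,5,7 \bmod 9$ the defining prime of $G=\mathsf{G}_2(q)$ is different from $3$, so $\Out(G)=\langle \psi \rangle$ is cyclic and the Schur multiplier of $G$ is a $3'$-group (trivial apart from $\mathsf{G}_2(4)$, where it has order $2$); hence it suffices to work with $G$ itself. I would also fix $R \in \Syl_3(G)$ and recall from \cite[Section 3.3]{malle2008extuni} that the subgroup $M$ ($\SL_3(q).2$ or $\SU_3(q).2$, according to the congruence of $q$) is $\psi$-stable and contains $N_G(R)$, so $M$ is an admissible local subgroup with $N_G(R) \subseteq M \subsetneq G$; the remaining group $\mathsf{G}_2(2)$ is covered by Proposition \ref{propsmall}.

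Next I would invoke the first of the three lemmas above to obtain a $\Gamma \times \mathcal{H}$-equivariant bijection $\Omega \colon \Irr_{3'}(G) \to \Irr_{3'}(M)$, together with the fact that $\mathcal{H}$ acts trivially on both character sets. Triviality of the $\mathcal{H}$-action forces $(\Gamma \times \mathcal{H})_\tau = \Gamma_\tau \times \mathcal{H}$ for every $\tau$, so clause (2) of the inductive condition reduces to producing $\Gamma_\tau$-equivariant extensions of $\tau$ and of $\Omega(\tau)$ that are simultaneously $\mathcal{H}$-invariant. For the outer part $\langle \psi \rangle$ this is exactly the content of the two remaining lemmas, and one then extends these to the residual inner automorphisms of $G$ and of $M$ via the canonical extension of Remark \ref{remextension}, which preserves $\mathcal{H}_\psi$-invariance.

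The final step is the routine part of \cite[Definition 3.5]{navarro2019reduction} that the preceding excerpt leaves implicit: checking that all the constructed characters of $G \rtimes \Gamma_\tau$ and $M \rtimes \Gamma_\tau$ have $C_{G \rtimes \Gamma}(M)$ in their kernel and that the cohomology classes attached to $\tau$ and $\Omega(\tau)$ agree. Both are immediate here, since every character in play is built from $G$ or $M$ by the explicit constructions of Remark \ref{remextension}, so nothing new enters the centraliser. I do not expect any genuine obstacle at the level of this theorem; the substantive difficulties have already been dealt with in the three lemmas, in particular the $q$ even case, where one must identify $\theta_8$ as a constituent of $\Ind_P^G(\chi_1((q-1)/3))$ and exploit $\chi_1((q-1)/3)^\psi = \chi_1(2(q-1)/3)$ (together with the fact that these two linear characters of $P$ induce the same character of $G$) to obtain an $\mathcal{H}$-invariant extension after a suitable conjugation, and the $q$ odd case, where the principal-series unipotent characters and the regular/semisimple characters $\theta_6,\theta_7$ are handled as in Proposition \ref{propregularext} and \cite[Proposition 6.7]{ruhstorfer2017navarro}.
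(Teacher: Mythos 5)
Your proposal is correct and follows essentially the same route as the paper, which simply records that "the previous considerations directly give us the following result": the three preceding lemmas supply the equivariant bijection and the $\mathcal{H}$-invariant extensions, and the remaining verifications (cyclic $\Out(G)$, $3'$-Schur multiplier, extension to inner automorphisms, and the kernel condition on $C_{G\rtimes\Gamma}(M)$) are exactly the routine bookkeeping you describe, mirroring the $\PSL_3/\PSU_3$ case.
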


\subsection{Some explicit groups} \label{sectionq2=2}
We now study the groups that we excluded before. The group ${}^2\mathsf{B}_2(2)$ is the Frobenius group of order $20$, hence solvable, and we do not have to consider it here. Again, we have to verify the original inductive McKay--Navarro condition from \cite[Definition 3.5]{navarro2019reduction}.
\begin{prop} \label{propsmall}
The groups ${}^2\mathsf{B}_2(8), {}^2\mathsf{G}_2(3)' , \mathsf{G}_2(2),$ and ${}^2\mathsf{F}_4(2)'$ satisfy the inductive McKay--Navarro condition for every prime $p$. The groups $\PSL_3(4)$ and $\mathsf{G}_2(4)$ satisfy the inductive McKay--Navarro condition for $p=3$.
\end{prop}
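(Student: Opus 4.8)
The strategy is a finite, case-by-case verification carried out with the explicitly known data --- character tables, power maps, local subgroups, covering groups and automorphism groups --- of these groups, using \cite{GAP4}.

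First I would remove the primes $p$ that are a defining characteristic of a realisation of the group as a group of Lie type. By \cite{ruhstorfer2017navarro} and \cite{johansson2020} this settles $p=2$ for ${}^2\mathsf{B}_2(8)$, for the Tits group ${}^2\mathsf{F}_4(2)'$ (the derived subgroup of index $2$ in ${}^2\mathsf{F}_4(2)$) and for $\mathsf{G}_2(2)$, as well as $p=3$ for ${}^2\mathsf{G}_2(3)'$ and --- using $\mathsf{G}_2(2)'\cong\PSU_3(3)$, of characteristic $3$ --- once more for $\mathsf{G}_2(2)$. What remains are the finitely many pairs $(G,p)$ with $p$ not a defining characteristic: $p\in\{5,7,13\}$ for ${}^2\mathsf{B}_2(8)$, $p\in\{3,5,13\}$ for ${}^2\mathsf{F}_4(2)'$, $p=7$ for $\mathsf{G}_2(2)$ and for ${}^2\mathsf{G}_2(3)'$, and $p=3$ for $\PSL_3(4)$ and $\mathsf{G}_2(4)$.

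For most of these pairs the group $G$ has cyclic outer automorphism group and a Schur multiplier that is either trivial --- namely ${}^2\mathsf{G}_2(3)'$, $\mathsf{G}_2(2)'\cong\PSU_3(3)$ and ${}^2\mathsf{F}_4(2)'$ --- or a $p'$-group, namely ${}^2\mathsf{B}_2(8)$ with $p\in\{5,7,13\}$ (multiplier $C_2\times C_2$) and $\mathsf{G}_2(4)$ with $p=3$ (multiplier $C_2$, $\Out\cong C_2$); in the latter two cases \cite[Lemma 5.1]{johansson2020} lets one work with the simple group itself, so in all of these it suffices to verify Condition \ref{imck}. Since $G$ is simple and $R$ is not normal, $N:=N_{G}(R)$ is a proper subgroup, which we take in Condition \ref{imck}. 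One then computes $\Irr_{p'}(G)$ and $\Irr_{p'}(N)$ together with the actions of $\Gamma:=\Aut(G)_R$ and of $\mathcal{H}$ (the latter read off from the power maps), finds the orbit data on the two sides to match, and writes down a $\Gamma\times\mathcal{H}$-equivariant bijection by inspection. For the extension requirement (2) one argues as in the proofs of Propositions \ref{propextunip}, \ref{propregularext} and \ref{propextlocal}: each character occurring is linear (Remark \ref{remextension}(\ref{remlinear})), or is the unique character of its degree and hence rational, real and $\Gamma$-fixed, so that Lemma \ref{lemmaoddorderreal} or the canonical extension of Remark \ref{remextension}(a) applies, or occurs with multiplicity one in a permutation character induced from a $\Gamma$-invariant linear character (Remark \ref{remextension}(\ref{reminduction})); in the few remaining cases one simply inspects the known character tables of $G\rtimes\Gamma_\psi$ and $N\rtimes\Gamma_\psi$.

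The most involved case is $\PSL_3(4)$ with $p=3$: here $\Out(\PSL_3(4))$ has order $12$ and is not cyclic, and the Schur multiplier $C_3\times C_4\times C_4$ is not a $3$-group, so Condition \ref{imck} does not apply and one must verify the original inductive McKay--Navarro condition of \cite[Definition 3.5]{navarro2019reduction} directly, much as for the general $\PSL_3(q)$ treated above. The plan is to factor out the $3'$-part $C_4\times C_4$ of the multiplier, reducing to the covering group $\SL_3(4)=3\cdot\PSL_3(4)$, and then --- with $N=N_{\PSL_3(4)}(R)$ (already maximal in $\PSL_3(4)$) and its preimage $3\cdot N$ --- to exhibit a $\Gamma\times\mathcal{H}$-equivariant bijection between the $3'$-characters on the two sides together with $\mathcal{H}$-invariant extensions to $\SL_3(4)\rtimes\Out(\PSL_3(4))$ and $(3\cdot N)\rtimes\Out(\PSL_3(4))$ that are compatible with the central $C_3$-extension, i.e.\ with the projective representations (equivalently, the central character triples) of \cite[Definition 3.5]{navarro2019reduction}, using the explicitly known character table of $\PSL_3(4)$ in place of the generic one. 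I expect this case --- where the non-trivial central extension and the non-cyclic automorphism group must be controlled simultaneously and the character data is non-generic --- to be the main technical effort; all of the verifications needed, however, reduce to finite computations with the explicitly available data.
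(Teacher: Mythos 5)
Your overall strategy -- a finite, case-by-case verification with \textsf{GAP} and the ATLAS data, disposing of the defining characteristic first and then checking the bijection and the extensions explicitly -- is the same as the paper's. But there is a genuine error in how you handle the Schur multipliers, and it affects exactly the cases that carry the real content of this proposition. The reduction goes in the opposite direction from the one you use: one may discard the \emph{$p$-part} of the multiplier, never the $p'$-part. Indeed, if $X$ is a perfect central extension of $S$ and $\chi\in\Irr_{p'}(X)$ lies over the linear character $\nu$ of a central $p$-subgroup $Z$, then the determinant of a representation affording $\chi$ is trivial on $X=[X,X]$, so $\nu^{\chi(1)}=1$; since $\ord(\nu)$ is a power of $p$ and $\chi(1)$ is prime to $p$, we get $\nu=1$. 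Hence every $p'$-character of the universal covering group kills the central $p$-part, and that part is invisible to the condition. The $p'$-part of the multiplier, by contrast, genuinely supports nontrivial central characters $\nu$ with $\Irr_{p'}(X\mid\nu)\neq\emptyset$, and \cite[Definition 3.5]{navarro2019reduction} requires the bijection to respect these $\nu$ and to produce compatible character triples over $Z(X)$. Concretely: for $\PSL_3(4)$ and $p=3$ you must work with $4^2.\PSL_3(4)$ (keeping $C_4\times C_4$, on which $\Out(\PSL_3(4))$ acts nontrivially, and discarding the $C_3$), not with $\SL_3(4)=3\cdot\PSL_3(4)$ as you propose; for $\mathsf{G}_2(4)$ and $p=3$ you must work with $2.\mathsf{G}_2(4)$; and for ${}^2\mathsf{B}_2(8)$ with $p\in\{5,7,13\}$ you must work with $2^2.{}^2\mathsf{B}_2(8)$. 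In all three cases your plan checks the condition only for the characters with the $p'$-part of the centre in their kernel and omits the faithful $p'$-characters of the cover together with their Clifford-theoretic compatibility data, which is precisely the nontrivial part of the verification; this is why the paper carries out the computations in $2^2.{}^2\mathsf{B}_2(8)$, $4^2.\PSL_3(4)$ and $2.\mathsf{G}_2(4)$.

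Two smaller points. For $\mathsf{G}_2(4)$ the invariant-extension template you rely on actually fails: the extensions of the $\Gamma$-invariant characters of $2.\mathsf{G}_2(4)$ and of the local subgroup are not all $\mathcal{H}$-invariant, and one instead verifies that the $\mathcal{H}$-actions on the two sets of extended characters are permutation isomorphic, which suffices for \cite[Definition 3.5]{navarro2019reduction}. And for the Tits group the extension ${}^2\mathsf{F}_4(2)'.2$ is non-split, so to compute the extended character values one passes to an outer automorphism of order $4$ and uses that ${}^2\mathsf{F}_4(2)'\rtimes\langle\tau\rangle$ is isoclinic to ${}^2\mathsf{F}_4(2)\times C_2$; your appeal to "the known character tables of $G\rtimes\Gamma_\psi$" silently assumes a split extension.
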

\begin{proof}
The computations were all made with \cite{GAP4}.
First, we consider $S={}^2\mathsf{B}_2(8)$. The universal covering group of $S$ is $G=2^2.{}^2\mathsf{B}_2(8)$. For $p=5$, let $N$ be the normalizer of a Sylow $5$-subgroup of $G$. Then, $\mathcal{H}$ acts trivially on all characters of $N$ and since we know by \cite[Theorem 4.1]{malle2008notlietype} that the inductive McKay-condition holds, it remains to consider the character extensions to their stabilizer in $G \rtimes  \Gamma$ or $N \rtimes  \Gamma$. Here, we see by explicit constructions that there is an $\mathcal{H}$-invariant extension for every $p'$-character of $N$ and $G$. This is sufficient since the outer automorphism group of $G$ is cyclic.
The remaining primes $p=7$ and $p=13$ can be treated in the same way. 

Similarly, the actions on the irreducible $p'$-characters and the extended characters can be computed straightforwardly for ${}^2\mathsf{G}_2(3)' \cong \PSL_2(8)$ and $\mathsf{G}_2(2)$ for all primes.

The Tits group ${}^2\mathsf{F}_4(2)'$ has trivial Schur multiplier and again we can explicitly determine the actions of $\mathcal{H}$ and the group automorphisms on the global and local characters. The extension ${}^2\mathsf{F}_4(2)'.2 \cong {}^2\mathsf{F}_4(2)$ contained in the ATLAS is not split but we can find an outer automorphism $\tau \in \Aut(G)$ of order $4$. Since ${}^2\mathsf{F}_4(2)' \rtimes \langle \tau \rangle$ is isoclinic to $ {}^2\mathsf{F}_4(2) \times C_2$, we can obtain its character table from the ATLAS \cite{ATLAS}. As before, the extensions of the local characters can be directly computed. We see that all characters have $(\Gamma \times \mathcal{H})$-invariant extensions to their stabilizers in the automorphism group. 

For $S=\PSL_3(4)$ and $p=3$, it suffices to consider the $3'$-part of the Schur multiplier. Thus, let $G=4^2.\PSL_3(4)$ and let $N$ be the normalizer of a Sylow $3$-group. As before, we can explicitly construct the extensions of the irreducible $3'$-characters of $G$ and $N$ to their inertia groups in $G \rtimes  \Gamma$ or $N \rtimes  \Gamma$. We see that the action of $(\Gamma \times \mathcal{H})_\psi$ on these extensions is trivial. 

For $\mathsf{G}_2(4)$, the character table of its universal covering group $G=2.\mathsf{G}_2(4)$ and its split extension $2.\mathsf{G}_2(4).2$ with its outer automorphism group of order $2$ are included in the ATLAS \cite{ATLAS}. Again, we let $N$ be the normalizer of a Sylow $3$-group and see that we find a $\Gamma \times \mathcal{H}$-equivariant bijection between $\Irr_{3'}(G)$ and $\Irr_{3'}(N)$. Looking at the character table of $2.\mathsf{G}_2(4).2$ and the corresponding extension of $N$, we see that the extensions of the $\Gamma$-invariant characters in $\Irr_{3'}(G)$ and $\Irr_{3'}(N)$ are not all $\mathcal{H}$-invariant. However, the actions of $\mathcal{H}$ on the extended characters are permutation isomorphic. Thus, the inductive McKay--Navarro condition holds for $p=3$.
\end{proof}

\subsection*{Acknowledgement} 
I would like to thank my PhD advisor Gunter Malle for his suggestions and comments. Furthermore, I am thankful to Britta Späth for pointing out a gap in the proof of Proposition \ref{propextunip} in an earlier version and Lucas Ruhstorfer for his helpful comments on it. This work was financially supported by the SFB-TRR 195 of the German Research Foundation (DFG).

\bibliography{biblio}
\bibliographystyle{amsalpha} 
\end{document}